\numberwithin{equation}{section}
 	\newtheorem{theorem}{Theorem}[section]
 	\newtheorem{lemma}[theorem]{Lemma}
 	\newtheorem{proposition}[theorem]{Proposition}
 	\newtheorem{corollary}[theorem]{Corollary}
 	\theoremstyle{definition}
 	\newtheorem{definition}[theorem]{Definition}
 	\theoremstyle{remark}
 	\newtheorem{remark}[theorem]{Remark}
 	\newtheorem{fact}[theorem]{Fact}
 	\newtheorem{example}[theorem]{Example}
 	\newtheorem{observation}[theorem]{Observation}
 	\newtheorem{question}[theorem]{Question}
 	\newtheorem{conjecture}[theorem]{Conjecture}
 	\newcommand{\codim}{\operatorname{codim}}
 	\newcommand{\Ass}{\operatorname{Ass}}
 	\newcommand{\im}{\operatorname{im}}
 	\newcommand{\gr}{\operatorname{grade}}
 	\newcommand{\Spec}{\operatorname{Spec}}
 	\newcommand{\rad}{\operatorname{rad}}
 	\newcommand{\Ht}{\operatorname{ht}}
 	\newcommand{\Sing}{\operatorname{Sing}}
 	\newcommand{\HH}{\operatorname{H}}
 	\newcommand{\V}{\operatorname{V}}
 	\newcommand{\Ext}{\operatorname{Ext}}
 	\newcommand{\Supp}{\operatorname{Supp}}
 	\newcommand{\Tor}{\operatorname{Tor}}
 	\newcommand{\Hom}{\operatorname{Hom}}
 	\newcommand{\Ann}{\operatorname{Ann}}
 	\newcommand{\depth}{\operatorname{depth}}
 	\newcommand{\Min}{\operatorname{Min}}
 	\newcommand{\CM}{CM }
 	\newcommand{\lo}{\longrightarrow}
 	\newcommand{\fm}{\mathfrak{m}}
 	\newcommand{\fp}{\mathfrak{p}}
 	\newcommand{\fq}{\mathfrak{q}}
\begin{document}
 		
 		\author[]{Mohsen Asgharzadeh}
 		
 		\address{}
 		\email{mohsenasgharzadeh@gmail.com}

 		\title[ ]
 		{weak normality, Gorenstein and Serre's conditions }

 		\subjclass[2010]{ Primary 13B22; 13F45;	13C15 }
 		\keywords{Associated primes; generalized Cohen--Macaulay; Gorenstein condition; homologically reduced; normalization; quasi-normal rings; seminormal rings; Serre's conditions; weakly normal rings. }
 		
 		\begin{abstract}
 		We compute the associated prime ideals of the normalization modulo the ring, and 	
 establish connections between  different types of generalizations		
 	 (resp. specializations) of the normalization.
 		This has some applications. For example, we present a characterization of quasi-normality, in the format that was conjectured by Vasconcelos. Also, we show in some cases, the normalization modulo the ring,
 		is homologically reduced. This provides a partial answer to a conjecture of Matlis.
 		\end{abstract}
 		
\maketitle

 		\section{Introduction}
 	
In this note $R$ is a commutative noetherian ring  with normalization $\overline{R}$.
Let $X$ be any subset of $\Spec (R)$. By $X^{\geq i}$ (resp. $X^i$) we mean $\{x\in X:\Ht(x)\geq i\}$ (resp.  $\{x\in X:\Ht(x)= i\}$).  By $\Ass(-)$ we mean the set of all associated prime ideals  of the $R$-module $(-)$.

  It is not hard to find a $1$-dimensional ring such that  $\Ass(\frac{\overline{R}}{R})$ consists of infinitely many height one prime ideals, see \cite{KOL}.	Kollar posted a question on the existence of integral domains of every dimension such that
  $\Spec(R)^1\subset\Ass(\frac{\overline{R}}{R})$, see  \cite[Question 54]{KOL}. We remark that there is a $2$-dimensional  quasi-normal local domain   such that
  $\Spec(R)^1= \Ass(\frac{\overline{R}}{R})$. Recall from \cite[Question 56]{KOL} that
  
\begin{question}  
 	 What can one say about  $\Ass(\frac{\overline{R}}{R})^{\geq2}$?
  \end{question}

Concerning Question 1.1, first recall that there are situations (integral domain or not) for which $\Ass(\frac{\overline{R}}{R})^{\geq 2}$
is infinite.   Then, for a (reduced) ring $R$, we show:
  	\begin{observation}
  	  $R$ is $(S_2)$ iff $\Ass(\frac{\overline{R}}{R})^{\geq 2}=\emptyset$ iff  $\{\fp\in\Spec(R):\fp^{\ast\ast}=\fp\}^{\geq 2}=\emptyset$.
  		\end{observation}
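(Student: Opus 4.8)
The plan is to prove a single statement at each height-$\ge 2$ prime and then globalize. Since Serre's condition $(S_2)$, membership in $\Ass(\overline R/R)$, and the equality $\fp^{\ast\ast}=\fp$ are all compatible with localization (normalization commutes with localization, so $(\overline R)_\fp=\overline{R_\fp}$; $\Hom$, hence the bidual, commutes with localization for finitely generated modules; and $\Ass$ localizes), it suffices to fix $\fp$ with $\Ht(\fp)\ge 2$, so that $\dim R_\fp\ge 2$, and prove the equivalence of the three local conditions (a) $\depth R_\fp\ge 2$; (b) $\fp\notin\Ass(\overline R/R)$; (c) $\fp^{\ast\ast}\ne\fp$. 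Because $R$ is reduced we have $\depth R_\fp\ge 1$, so each will be shown equivalent to its negation, i.e. to $\depth R_\fp=1$. Writing $\fm:=\fp R_\fp$ and $\kappa:=R_\fp/\fm$, the first routine fact I would record is that $\depth\fm=1$ for every such $\fp$: from $0\to\fm\to R_\fp\to\kappa\to 0$ and the long exact sequence of $\Ext^\bullet_{R_\fp}(\kappa,-)$ one gets $\Hom(\kappa,\fm)=0$ (as $\depth R_\fp\ge 1$) while $\Ext^1(\kappa,\fm)\ne 0$. This one computation drives both nontrivial equivalences.

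For (a)$\Leftrightarrow$(b) I would localize the defining sequence to $0\to R_\fp\to\overline{R_\fp}\to(\overline R/R)_\fp\to 0$ and feed it into local cohomology at $\fm$. Since $\overline{R_\fp}$ is normal it satisfies $(S_2)$, so $\HH^0_{\fm}(\overline{R_\fp})=\HH^1_{\fm}(\overline{R_\fp})=0$ (each component is normal of dimension $\ge 2$, hence of depth $\ge 2$); the long exact sequence then yields an isomorphism $\HH^0_{\fm}\big((\overline R/R)_\fp\big)\cong \HH^1_{\fm}(R_\fp)$. Consequently $\fp\in\Ass(\overline R/R)\iff \HH^0_{\fm}((\overline R/R)_\fp)\ne 0\iff \HH^1_{\fm}(R_\fp)\ne 0\iff\depth R_\fp=1$, which is exactly the negation of (a).

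For (a)$\Leftrightarrow$(c) I would invoke the standard reflexivity criterion: a finitely generated torsion-free module $M$ is reflexive if and only if $\depth M_\fq\ge\min\{2,\depth R_\fq\}$ for all $\fq\in\Spec(R)$. Applying this to $M=\fp$ and using $0\to\fp_\fq\to R_\fq\to (R/\fp)_\fq\to 0$ with the depth lemma, the inequality is automatic at every $\fq\ne\fp$: it is vacuous when $\fp\not\subseteq\fq$, and when $\fq\supsetneq\fp$ the quotient $(R/\fp)_\fq$ is a positive-dimensional local domain, so $\depth(R/\fp)_\fq\ge 1$ forces $\depth\fp_\fq\ge\min\{2,\depth R_\fq\}$. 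Thus reflexivity of $\fp$ is controlled solely at $\fq=\fp$, where the required bound reads $\depth\fm\ge\min\{2,\depth R_\fp\}$; since $\depth\fm=1$, this holds precisely when $\depth R_\fp\le 1$. Hence $\fp^{\ast\ast}=\fp\iff\depth R_\fp=1$, again the negation of (a), and assembling the three local equivalences over all primes of height $\ge 2$ gives the stated triple equivalence.

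The main obstacle is the vanishing $\HH^1_{\fm}(\overline{R_\fp})=0$ used in (a)$\Leftrightarrow$(b): it is transparent when $R_\fp$ is equidimensional, but if $R_\fp$ is not equidimensional—say it has a minimal prime of coheight one—then $\overline{R_\fp}$ acquires a one-dimensional normal factor with $\depth=1$, so $\HH^1_{\fm}(\overline{R_\fp})\ne 0$ and the clean isomorphism above breaks. In that situation one already has $\depth R_\fp=1$, so the only remaining point is to re-confirm $\fp\in\Ass(\overline R/R)$ directly; I would do this through the $S_2$-ification $R\subseteq\widetilde R\subseteq\overline R$, exhibiting an element $x\in\widetilde R_\fp\setminus R_\fp$ with $\fm x\subseteq R_\fp$, so that $\fm$ is associated to $(\overline R/R)_\fp$. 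This conductor computation is exactly the kind underlying the paper's explicit determination of $\Ass(\overline R/R)$, which handles the general (possibly non-equidimensional, possibly non-domain) case uniformly.
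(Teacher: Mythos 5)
Your overall architecture --- reduce each of the three conditions, at a fixed prime $\fp$ of height $\geq 2$, to the single statement $\depth R_{\fp}=1$ --- is sound and matches the paper's Fact \ref{mat}, but two of the four implications rest on justifications that do not hold. First, the reflexivity criterion you invoke (``a finitely generated torsion-free module $M$ is reflexive iff $\depth M_\fq\geq\min\{2,\depth R_\fq\}$ for all $\fq$'') is false in the generality of the Observation: only the half ``reflexive $\Rightarrow$ depth condition'' is valid, because a reflexive module is a second syzygy. For the converse half, take $R=k[[t^3,t^4,t^5]]$ and $I=(t^3,t^4)$: then $I$ is torsion-free with $\depth I=1=\min\{2,\depth R\}$ everywhere, yet $I^{-1}=k[[t]]$ and $I^{\ast\ast}=(R:_{Q(R)}k[[t]])=\fm\neq I$. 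Consequently your derivation of ``$\depth R_\fp=1\Rightarrow\fp$ reflexive'' is unsupported, and this is precisely the implication needed to get from ``no reflexive primes of height $\geq 2$'' back to $(S_2)$ (equivalently, to show $\fp\in\Ass(\frac{\overline{R}}{R})\Rightarrow\fp^{\ast\ast}=\fp$). The paper obtains it from Vasconcelos's result \cite[Corollary 2.3]{wol}, a statement specifically about prime ideals of depth one containing a regular element; it is not a consequence of a general criterion for torsion-free modules, since such a criterion characterizes exactly the quasi-normal rings the paper is studying.

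Second, in the direction $\depth R_\fp=1\Rightarrow\fp\in\Ass(\frac{\overline{R}}{R})$, your isomorphism $\HH^0_{\fp R_\fp}((\frac{\overline{R}}{R})_\fp)\cong\HH^1_{\fp R_\fp}(R_\fp)$ needs $\HH^1_{\fp R_\fp}(\overline{R_\fp})=0$, which, as you note, fails when $R_\fp$ is not equidimensional, and is in any case delicate because the Observation does not assume $R$ is Mori, so $\overline{R_\fp}$ need not be Noetherian. The patch you propose --- exhibit $x\in\widetilde{R}_\fp\setminus R_\fp$ with $\fp R_\fp\cdot x\subseteq R_\fp$ --- is circular: the existence of such an $x$ inside $\overline{R_\fp}$ is exactly the assertion $\fp R_\fp\in\Ass(\overline{R_\fp}/R_\fp)$ you are trying to prove. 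The paper closes this step via Matsumura--Ogoma: from $\depth R_\fp=1$ one finds a nonzerodivisor $x$ with $\fp\in\Ass(R/xR)$, necessarily an embedded prime since $\Ht(\fp)\geq 2$, and \cite[Theorem 2.6]{m} identifies $\Ass(\frac{\overline{R}}{R})$ with $\Sing^1(R)\cup E_1$, where $E_1$ is the set of embedded primes of principal ideals. (Concretely: if $(x:y)=\fp$ with $y\notin (x)$, then $\fp\cdot\frac{y}{x}\subseteq R_\fp$; if $\frac{y}{x}$ were not integral over $R_\fp$, then $\fp\cdot\frac{y}{x}\not\subseteq\fp R_\fp$ would make $\fp R_\fp$ invertible, hence of height one, a contradiction.) Both gaps are fillable, but each requires a genuine input that the proposal does not supply.
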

This  implies that  $\Ass^{\geq 2}(\frac{\overline{R}}{R})$ is a good candidate for measuring Serre's $(S_2)$
condition, even the ring is not reduced.
 In fact, $\Ass^{\geq 2}(\frac{\overline{R}}{R})$ provides a new method to construct  the  $(S_2)$ locus, see Corollary \ref{gr}.
 We compute $\Ass(\frac{\overline{R}}{R})^{\geq 2}$ in  almost and generalized
 Cohen--Macaulay cases, see Proposition \ref{acm} and \ref{ge}. Additional examples are presented in \ref{inters} and \ref{remint}.
 Over a quasi-normal ring, Vasconcelos \cite[Theorem 2.4]{wol} proved that an ideal (of positive grade)  is reflexive
 if and only if all of its associated primes are of height one. He noted
 that possibly this is characteristic of quasi-normality (see \cite[Page 271]{wol}). 
As an application, we confirm  Vasconcelos' prediction:

 \begin{theorem}
	Let $R$ be  an integral domain and suppose	a nonzero ideal  is reflexive
	if and only if all of its associated primes are of height one. Then  $R$ is  quasi-normal.
\end{theorem}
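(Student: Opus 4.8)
The plan is to prove the contrapositive along two fronts: I will show that the stated biconditional forces Serre's condition $(S_2)$ and also forces the Gorenstein property in codimension one, i.e.\ $R_\fp$ Gorenstein for every $\fp$ with $\Ht(\fp)\le 1$. Since quasi-normality is precisely $(S_2)$ together with this $(G_1)$ condition, the two together finish the proof. The key point is that both directions of the hypothesis get used: the forward implication (reflexive $\Rightarrow$ all associated primes of height one) yields $(S_2)$, while the backward implication (all associated primes of height one $\Rightarrow$ reflexive) yields $(G_1)$.

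First I would obtain $(S_2)$ from the Observation above. Suppose $R$ were not $(S_2)$. Since $R$ is a domain it is reduced, so the Observation provides a prime $\fp$ with $\Ht(\fp)\ge 2$ and $\fp^{\ast\ast}=\fp$; that is, $\fp$ is a reflexive nonzero ideal. But $R/\fp$ is a domain, so $\Ass(R/\fp)=\{\fp\}$, and this unique associated prime has height at least two. This contradicts the forward implication of the hypothesis. Hence $R$ is $(S_2)$.

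Next I would establish $(G_1)$, where the real work lies. Suppose some height-one prime $\fp$ has $R_\fp$ not Gorenstein. As $R$ is a domain and $\Ht(\fp)=1$, the localization $R_\fp$ is a one-dimensional Cohen--Macaulay local domain. By the classical theorem of Bass and Matlis --- every nonzero ideal of a one-dimensional Noetherian local domain is reflexive if and only if the ring is Gorenstein --- the failure of Gorensteinness yields a nonzero proper non-reflexive ideal $\bar J\subseteq R_\fp$; and because $R_\fp$ is one-dimensional local, every nonzero proper ideal is automatically $\fp R_\fp$-primary, so $\bar J$ is $\fp R_\fp$-primary. I then descend $\bar J$ to $R$ by contraction, setting $J=\bar J\cap R$. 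Since localization at $\fp$ is an epimorphism, every ideal of $R_\fp$ is extended, so $J R_\fp=\bar J$; moreover the contraction of a $\fp R_\fp$-primary ideal is $\fp$-primary, whence $\Ass(R/J)=\{\fp\}$, all of height one.

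It remains to see that $J$ breaks the hypothesis. Reflexivity is stable under localization, because $\Hom$ commutes with localization for finitely generated modules over a Noetherian ring; thus if $J$ were reflexive then $J_\fp=\bar J$ would be reflexive over $R_\fp$, which is false. So $J$ is a nonzero ideal, not reflexive, whose associated primes all have height one --- contradicting the backward implication of the hypothesis. Therefore every height-one prime is Gorenstein, $R$ is $(G_1)$, and with the first step $R$ is quasi-normal. The principal obstacle is this second step: converting the purely local failure of Gorensteinness into a genuine \emph{global} non-reflexive ideal whose associated primes are confined to height one. The local input is Bass's reflexivity criterion, and the descent is handled by the fact that contracting a primary ideal preserves primariness and that reflexivity is inherited under localization; the forward direction $(S_2)$, by contrast, is essentially immediate from the Observation.
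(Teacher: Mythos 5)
Your proof is correct and follows essentially the same route as the paper: $(S_2)$ is extracted from the equivalence between $(S_2)$ and the absence of reflexive prime ideals of height at least two, and $(G_1)$ comes from Bass's theorem on one-dimensional Cohen--Macaulay local rings via the correspondence between ideals of $R_{\fp}$ and $\fp$-primary ideals of $R$. The only cosmetic difference is that you contract a non-reflexive ideal from $R_{\fp}$ and argue by contradiction, whereas the paper extends ideals from $R$ and uses a primary decomposition to reduce to the $\fp$-primary case before invoking the same fact of Bass.
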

Recall that the $(S_2)$ condition has an essential role in \cite{gt}.
Gerco and Traverso \cite{gt} gave a seminormal  ring which is not $(S_2)$, and they implicitly asked when is a  seminormal ring 
 $(S_2)$? As weakly normal rings are seminormal, and as another consequence of Observation 1.2, we observe:

\begin{corollary}\label{1.4}
	Let 	$(R, \fm,k)$  be   complete, weakly normal domain and of prime characteristic with $k=\overline{k}$.
	Then $\depth(R)\geq\min\{2,\dim R\}$. Also, $R$ is   $(S_2)$
	provided $R$ is generalized Cohen--Macaulay.
\end{corollary}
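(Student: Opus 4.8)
The plan is to treat the depth inequality as the substantive point and to deduce the $(S_2)$ statement from it together with Observation 1.2. When $\dim R\le 1$ there is nothing to prove, since $R$ is a domain and hence $\depth R\ge 1=\min\{2,\dim R\}$ (and $R$ is trivially $(S_2)$ in those dimensions); so I would assume $d:=\dim R\ge 2$ throughout. As $R$ is a complete local domain it is Nagata, so its normalization $\overline{R}$ is a module-finite, complete, normal local domain with $\dim\overline{R}=d$. Being normal, $\overline{R}$ satisfies $(S_2)$, whence $\HH^0_{\fm}(\overline{R})=\HH^1_{\fm}(\overline{R})=0$. Feeding the short exact sequence $0\to R\to\overline{R}\to\overline{R}/R\to 0$ into local cohomology then gives $\HH^1_{\fm}(R)\cong\HH^0_{\fm}(\overline{R}/R)$, a module of finite length; since $\HH^0_{\fm}(R)=0$ (as $R$ is a domain of positive dimension), I obtain the equivalences $\depth R\ge 2\iff\HH^1_{\fm}(R)=0\iff\fm\notin\Ass(\overline{R}/R)$. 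Thus everything reduces to showing $\fm\notin\Ass(\overline{R}/R)$.

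Next I would bring in the characteristic. Because $\Char R=p>0$, the $p$-th power map $b\mapsto b^{p}$ induces a Frobenius action $F$ on $\HH^0_{\fm}(\overline{R}/R)$ (note that $\fm b\subseteq R$ implies $m^{p}b^{p}=(mb)^{p}\in R$ for all $m\in\fm$, so $F$ preserves this finite-length torsion submodule). Weak normality in characteristic $p$ says precisely that $b\in\overline{R}$ with $b^{p}\in R$ forces $b\in R$; hence $F$ is \emph{injective} on $\HH^0_{\fm}(\overline{R}/R)$, and by iteration every Frobenius power $F^{e}$ is injective as well. Suppose, for contradiction, that $M:=\HH^0_{\fm}(\overline{R}/R)\ne 0$. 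An injective endomorphism of a finite-length module is bijective, so each $F^{e}$ is an automorphism of $M$.

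The heart of the argument — and the step I expect to be the main obstacle — is to convert this bijective Frobenius into an honest Artin--Schreier element. Using that $R$ is complete with $k=\overline{k}$, I would invoke the structure of finite-length modules carrying a bijective Frobenius over such a ring (Hartshorne--Speiser--Lyubeznik, together with Lang's theorem over the algebraically closed $k$): a nonzero such module admits a nonzero $F^{e}$-fixed vector for some $e$. Lifting this vector produces $y\in\overline{R}\setminus R$ with $y^{p^{e}}-y=r\in R$. Now completeness pays off a second time: the polynomial $T^{p^{e}}-T-r$ is separable and, modulo $\fm$, splits into distinct linear factors over $k=\overline{k}$, so by Hensel's lemma it already splits into linear factors over $R$ itself; as $y$ is one of its roots and $\overline{R}$ lies in the fraction field of $R$, this forces $y\in R$, a contradiction. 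Hence $M=0$, i.e.\ $\fm\notin\Ass(\overline{R}/R)$ and $\depth R\ge 2$, which is the depth inequality. The delicate point is that weak normality only controls the purely inseparable (Frobenius) direction, so the genuine potential obstruction is the \emph{separable} Artin--Schreier class; the hypotheses ``complete'' and $k=\overline{k}$ are exactly what annihilate that class via Hensel.

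Finally, for the $(S_2)$ assertion I would combine this with Observation 1.2. When $R$ is generalized Cohen--Macaulay, $R_{\fp}$ is Cohen--Macaulay for every non-maximal $\fp$, so $R$ is $(S_2)$ on the punctured spectrum; by Proposition \ref{ge} this pins down $\Ass(\overline{R}/R)^{\ge 2}\subseteq\{\fm\}$. Since the depth computation above already yields $\fm\notin\Ass(\overline{R}/R)$, we conclude $\Ass(\overline{R}/R)^{\ge 2}=\emptyset$, and Observation 1.2 gives that $R$ is $(S_2)$.
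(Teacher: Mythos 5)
Your proof is correct, and while it shares the paper's overall skeleton -- a Frobenius action on a degree-one local cohomology module that weak normality forces to be injective -- it diverges from the paper at the decisive step. The paper (Lemma \ref{first} via Fact \ref{f2}) works directly with $\HH^1_{\fm}(R)$, citing Schwede for the equivalence of weak normality with injectivity of $F$ on $\HH^1_{\fm}(R)$, and then citing the Singh--Walther theorem that $\HH^1_{\fm}(R)$ is $F$-torsion for a strictly Henselian complete local domain; injective plus torsion gives zero. You instead transport everything to $\HH^0_{\fm}(\overline{R}/R)\cong\HH^1_{\fm}(R)$, where injectivity of $F$ falls out of the elementary characterization of weak normality in characteristic $p$ (the same ``$a^p\in R\Rightarrow a\in R$'' criterion the paper uses in Proposition \ref{wa}), and you replace the Singh--Walther black box by an explicit Artin--Schreier argument: a bijective Frobenius on a nonzero finite-length module yields a fixed vector (Lang), hence an element $y\in\overline{R}\setminus R$ with $y^{p^e}-y\in R$, which Hensel's lemma over the complete ring with $k=\overline{k}$ forces into $R$. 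This buys a self-contained proof that makes visible exactly where completeness and $k=\overline{k}$ enter, at the cost of re-proving the special case of the connectedness/torsion theorem that the paper simply cites; the $(S_2)$ deduction at the end coincides with Proposition \ref{cge}. One small point deserves a line of justification: your assertion that an injective $F^e$ on a finite-length module is automatically bijective is not literally the linear-algebra fact, since $F$ is only $p$-semilinear and its image is a priori just an additive subgroup. It is nonetheless true here -- injectivity of $F$ together with $F(\fm M)\subseteq\fm^{p}M$ forces $\fm M=0$, and then perfectness of $k$ makes the image a $k$-subspace of full dimension -- and this is exactly the content of the Hartshorne--Speiser--Lyubeznik/Lang machinery you invoke, so the gap is cosmetic rather than substantive.
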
  
 
  	Suppose in addition to  Corollary \ref{1.4} that there is an open subset 
  	$U\subset\Spec(R)$ of codimension at least two such that it is  Gorenstein in codimension one. It follows  that $R$ is quasi--normal.
The recent result \cite[Theorem 1.1]{gl} says that $R$ is   weakly Arf  provided $R$ is 	$(S_2)$, Mori and $F$-pure.
As another application, we present two generalizations of \cite[Theorem 1.1]{gl}. Namely, we drop
the extra Mori assumption. Also, we  replace the $F$-pure rings  with  weakly normal rings  which is weaker. Consequently, we show:

\begin{corollary}
	Let $R$ be a  complete local domain of dimension $d\geq 2$ and prime characteristic  with $k=\overline{k}$. If  $R$ is generalized Cohen--Macaulay and weakly normal, then    $R$ is  weakly Arf.
\end{corollary}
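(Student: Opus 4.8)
The plan is to obtain the conclusion as a synthesis of Corollary \ref{1.4} with the weakly normal version of our generalization of \cite[Theorem 1.1]{gl}. The key observation is that weak normality by itself does not force Serre's condition $(S_2)$, so I would first spend the generalized Cohen--Macaulay hypothesis to secure $(S_2)$, and only then invoke the weakly Arf criterion. In other words, the deduction is two-step: \emph{generalized CM $+$ (the standing hypotheses) $\Rightarrow (S_2)$}, followed by \emph{$(S_2)$ $+$ weak normality $\Rightarrow$ weakly Arf}.

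For the first step, I would simply check that $R$ meets every hypothesis of Corollary \ref{1.4}: it is a complete, weakly normal domain of prime characteristic whose residue field is algebraically closed. Since $d\geq 2$, the first assertion of that corollary already gives $\depth(R)\geq\min\{2,d\}=2$. More to the point, because $R$ is in addition generalized Cohen--Macaulay, the second assertion of Corollary \ref{1.4} yields that $R$ satisfies $(S_2)$.

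The second step is to feed this into the generalization of \cite[Theorem 1.1]{gl} established above. Recall that \cite[Theorem 1.1]{gl} deduces weak Arf-ness from three hypotheses: $(S_2)$, Mori, and $F$-pure. In our version the Mori condition has been dropped and $F$-purity has been weakened to weak normality. Applying this version to $R$, which we have just shown to be $(S_2)$ and which is weakly normal by assumption, gives that $R$ is weakly Arf, completing the argument.

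Thus the genuine content lies in the two generalizations rather than in the present deduction, and the main obstacle is to rerun the argument of \cite{gl} with the weaker input. In prime characteristic $F$-purity implies weak normality but not conversely, so one cannot merely quote their theorem: wherever \cite{gl} exploits an $F$-pure splitting, one must substitute the defining feature of weak normality, namely that a weakly subintegral element of $\overline{R}$ already belongs to $R$. The hard part will be showing that controlling the conductor $(R:_{\overline{R}}\overline{R})$ through this property, with $(S_2)$ taking over the role previously played by the Mori assumption, still drives the weakly Arf conclusion; verifying that these two substitutions are simultaneously sufficient is the crux.
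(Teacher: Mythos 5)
Your two-step decomposition is exactly the paper's: Proposition \ref{cge} (equivalently, the second assertion of Corollary \ref{1.4}) gives $(S_2)$ from weak normality plus generalized Cohen--Macaulayness, and then the weakly Arf criterion of Proposition \ref{wa} finishes. One caveat, though: you assert that in the weakly normal version of the generalization of \cite[Theorem 1.1]{gl} the Mori hypothesis has been dropped, and your closing paragraph treats ``$(S_2)$ $+$ weakly normal $\Rightarrow$ weakly Arf'' with no finiteness assumption as the target. That is not what the paper proves. Proposition \ref{wa} offers two separate alternatives: in the $F$-pure case the Mori hypothesis is dropped, but in the weakly normal case it is retained, and the proof genuinely uses it --- the characterization ``$a\in\overline{R}$ and $a^p\in R$ imply $a\in R$'' of weak normality is quoted for reduced \emph{Mori} rings of characteristic $p$. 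So as written your second step appeals to a statement the paper does not establish. The gap is harmless for the corollary at hand, but it must be closed by one observation you omit: $R$ is a complete local domain, hence Nagata, so $\overline{R}$ is module-finite over $R$ and $R$ is Mori; Proposition \ref{wa}(ii) then applies verbatim. With that sentence added, your argument coincides with the paper's proof.
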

  	
 Quasi-normal rings includes the class of Gorenstein rings.
 In particular,  these rings behave well under certain dualities.
 	The final section collects some duality  remarks on generalized Cohen--Macaulay modules over rings  satisfying  suitable $(G_n)$ conditions. These results are motivated by a result of Hartshorne,
  	and have some consequences. For instance:
  	
  	\begin{corollary}
  		Let $R$ be a $d$-dimensional complete Cohen--Macaulay local ring satisfying $(G_{d-1})$ and equipped with a canonical module $ \omega_R$. Then
 $\dim R/\fp=d$ for all $\fp\in\Ass( \omega_R\otimes  \omega_R)\setminus\{\fm\}$.
  	\end{corollary}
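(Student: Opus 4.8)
The plan is to reduce the dimension statement to a statement about heights and then exploit the fact that $(G_{d-1})$ forces $R$ to be Gorenstein everywhere except possibly at $\fm$. First I would record that a complete Cohen--Macaulay local ring is unmixed, catenary and equidimensional, so that $\Ht(\fp)+\dim R/\fp=d$ for every $\fp\in\Spec(R)$; in particular $\dim R/\fp=d$ is equivalent to $\Ht(\fp)=0$, i.e.\ to $\fp$ being a minimal prime of $R$. Thus the corollary is equivalent to the inclusion $\Ass(\omega_R\otimes\omega_R)\setminus\{\fm\}\subseteq\Min(R)$, and it suffices to rule out associated primes $\fp$ with $1\leq\Ht(\fp)\leq d-1$.

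Next I would localize. Fix $\fp\in\Spec(R)$ with $\fp\neq\fm$. Since $R$ is local of dimension $d$ and equidimensional, $\dim R/\fp\geq 1$, hence $\Ht(\fp)\leq d-1$, so the hypothesis $(G_{d-1})$ guarantees that $R_\fp$ is Gorenstein. The canonical module commutes with localization, $(\omega_R)_\fp\cong\omega_{R_\fp}$, and a Cohen--Macaulay local ring is Gorenstein exactly when its canonical module is free of rank one; hence $(\omega_R)_\fp\cong R_\fp$. Because tensor products commute with localization, this gives $(\omega_R\otimes_R\omega_R)_\fp\cong R_\fp$ for every $\fp\neq\fm$; in other words $\omega_R\otimes_R\omega_R$ agrees with $R$ on the punctured spectrum.

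The final step uses the compatibility of associated primes with localization. Writing $M=\omega_R\otimes_R\omega_R$, which is finitely generated, for $\fp\neq\fm$ we have $\fp\in\Ass(M)$ if and only if $\fp R_\fp\in\Ass_{R_\fp}(M_\fp)=\Ass_{R_\fp}(R_\fp)$. As $R_\fp$ is Cohen--Macaulay, $\Ass_{R_\fp}(R_\fp)=\Min(R_\fp)$, and the maximal ideal $\fp R_\fp$ of $R_\fp$ lies in $\Min(R_\fp)$ precisely when $\dim R_\fp=\Ht(\fp)=0$. Therefore any $\fp\in\Ass(M)\setminus\{\fm\}$ must be a minimal prime, which by the first step means $\dim R/\fp=d$, as desired.

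The argument is essentially formal once the two localization principles are in place, so there is no serious obstacle; the only points requiring care are to confirm that $(G_{d-1})$ really covers the entire punctured spectrum (this is where equidimensionality of the complete Cohen--Macaulay ring $R$ is used, forcing $\Ht(\fp)\leq d-1$ for all $\fp\neq\fm$) and to note that completeness enters only through the existence of $\omega_R$ and the unmixedness needed to pass from ``$\fp$ minimal'' to ``$\dim R/\fp=d$''.
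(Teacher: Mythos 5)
Your proof is correct, and it takes a genuinely different and more elementary route than the paper's. The paper establishes the stronger assertion that $\omega_R\otimes_R\omega_R$ is generalized Cohen--Macaulay: it first shows $\omega_R^{\ast}$ is generalized Cohen--Macaulay, invokes the local duality statement of Lemma \ref{ll} to bound $\HH^i_{\fm}(\omega_R\otimes\omega_R)$ for $2\leq i\leq d$, and then identifies $\omega_R$ with an ideal (using generic Gorensteinness) to compute $\HH^1_{\fm}(\omega_R\otimes\omega_R)\cong\HH^0_{\fm}(\omega_R/\omega_R^2)$; the dimension claim on associated primes is then extracted from the Schenzel--Trung--Cuong characterization of generalized Cohen--Macaulay modules. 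You instead observe that on the punctured spectrum the hypotheses trivialize the module: since a Cohen--Macaulay local ring is catenary and equidimensional, every $\fp\neq\fm$ has $\Ht(\fp)\leq d-1$, so $(G_{d-1})$ gives $\omega_{R_\fp}\cong R_\fp$ and hence $(\omega_R\otimes\omega_R)_\fp\cong R_\fp$, after which the compatibility of $\Ass$ with localization and $\Ass(R_\fp)=\Min(R_\fp)$ finish the argument. Your route is shorter, avoids local cohomology entirely, shows completeness is needed only for the existence of $\omega_R$, and in fact pins down $\Ass(\omega_R\otimes\omega_R)\setminus\{\fm\}=\Min(R)\setminus\{\fm\}$; what it does not give is the generalized Cohen--Macaulay property of $\omega_R\otimes\omega_R$ itself, which the paper needs later (e.g.\ for Corollary \ref{sym}), so the two arguments are complementary rather than interchangeable.
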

 From these duality results  we  present a connection to the symbolic powers, 	 see Corollary \ref{sym}. In the final section, we deal with the following   conjecture of Matlis:
 \begin{conjecture}\label{1.7}(See \cite[Page 51]{mat2})
  $\frac{\overline{R}}{R}$ is $h$-reduced.
 \end{conjecture}
Matlis answered  this over Q-rings, see \cite{q}.
Also, this was extended to a class of rings called $Q^n$-rings by Bertin, see \cite{ber}. We also partially  answer it, see Proposition \ref{sol}.


 	\section{(Higher) associated primes of $\frac{\overline{R}}{R}$}
Let $R$ be a   ring and let $S\subset R$ be the set of all elements which are not zero divisors in $R$.
We  localize $R$ at the multiplicative closed set $S$ to obtain the total quotient ring $Q(R):=S^{-1}R$. Recall that $\overline{R}$ stands for the
 	normalization  of $R$. 
 	By definition,  $\overline{R}$ is the integral closure of $R$ in $Q(R)$.
 	  We start by an example:
 	\begin{example}
 		Let $R:=k[x, y, z]/(y^2 -x^2(z^2 - x))$. Then $\Ass(\frac{\overline{R}}{R})=\{(x,y)\}$.
 		In particular,  $\Ass(\frac{\overline{R}}{R})^{\geq 2}=\emptyset$.
 	\end{example}

Recall that $R$ is a Mori ring if it is reduced and
  $\overline{R}$ is finite as an $R$-module.
Also, the notation $(-)^\ast$ stands for $\Hom_R(-,R)$.
The first item in the following when $R$ is \it{Mori} is in \cite{lv}.
Also, see \cite{m}.

\begin{proposition}\label{p}
Let $R$ be any ring. The following assertions hold: 
\begin{enumerate}
\item[i)] $\Ass(\frac{\overline{R}}{R})\subset\{\fp\in\Spec(R):\depth(R_{\fp})=1\}$;
\item[ii)]  if  $R$ is reduced, then  $\Ass(\frac{\overline{R}}{R})\subset\{\fp\in\Spec(R):\fp^{\ast\ast}=\fp\}$.
\end{enumerate}
\end{proposition}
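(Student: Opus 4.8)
The plan is to treat both parts from one common starting point. If $\fp\in\Ass(\frac{\overline R}{R})$ then, by definition of an associated prime, $\fp=\Ann_R(\bar x+R)=(R:_R\bar x)$ for some $\bar x\in\overline R\setminus R$; and since $\overline R\subset Q(R)=S^{-1}R$, I would write $\bar x=a/s$ with $a\in R$ and $s\in S$ a non-zerodivisor. Everything below is read off from this single representation, and reducedness will be needed only in (ii).

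For (i), I would first rewrite the colon ideal inside $R$ itself: unwinding the construction of $Q(R)$ shows $\fp=(R:_R\bar x)=(sR:_R a)$. Two features are then immediate: $s\in\fp$, so $\fp$ contains a non-zerodivisor, and $a\notin sR$ (otherwise $\bar x=a/s\in R$). After localizing at $\fp$, the image of $s$ is a non-zerodivisor of $R_\fp$ lying in the maximal ideal, while $\overline a:=a+sR_\fp$ is a \emph{nonzero} element of $R_\fp/sR_\fp$ whose annihilator is exactly $\fp R_\fp$ (colon commutes with localization, and $a\notin sR_\fp$ because any witness $u\notin\fp$ to $a\in sR_\fp$ would lie in $(sR:_R a)=\fp$). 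Hence $\fp R_\fp\in\Ass(R_\fp/sR_\fp)$, so $\depth(R_\fp/sR_\fp)=0$, and the depth formula for the regular element $s$ gives $\depth R_\fp=1+\depth(R_\fp/sR_\fp)=1$.

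For (ii), the role of reducedness is that $Q:=Q(R)$ is then a finite product of fields, so for any finitely generated $R$-submodule $N\subset Q$ containing a non-zerodivisor there is a canonical identification $\Hom_R(N,R)\cong(R:_Q N):=\{q\in Q:qN\subset R\}$: every homomorphism extends $Q$-linearly to $N\otimes_R Q=Q$ and is therefore multiplication by a fixed element of $Q$. Applying this with $N=\fp$, and then with $N=\fp^{\ast}\cong(R:_Q\fp)$, identifies $\fp^{\ast\ast}$ with $(R:_Q(R:_Q\fp))$, the natural biduality map becoming the inclusion $\fp\subset(R:_Q(R:_Q\fp))$. It then remains to prove the reverse inclusion, and here I would use only two explicit elements of $(R:_Q\fp)$, namely $1$ and $\bar x$ — the latter lying there precisely because $\bar x\fp\subset R$, which is the defining property of $\fp$. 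If $y\cdot(R:_Q\fp)\subset R$, then $y=y\cdot 1\in R$ and $y\bar x\in R$, whence $y\in(R:_R\bar x)=\fp$. Thus $\fp^{\ast\ast}\subset\fp$, and combined with the inclusion above this gives $\fp^{\ast\ast}=\fp$.

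The depth bookkeeping in (i) is routine. The step needing the most care is the identification $\Hom_R(N,R)\cong(R:_Q N)$ in (ii): because $R$ need not be a domain, one must verify that the extension-to-$Q$ argument really goes through for a reduced total quotient ring (this is where $Q$ being a product of fields is used, so that the chosen non-zerodivisor in $N$ becomes a unit and $N\otimes_R Q=Q$) and that the identification is compatible with the biduality map. Once that is secured, the collapse $\fp^{\ast\ast}=\fp$ is forced by the single element $\bar x$.
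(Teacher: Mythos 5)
Your argument is correct, but it takes a genuinely different route from the paper's on both parts. For (i), the paper first disposes of minimal primes separately, then derives $\depth(R_{\fp})\leq 1$ by embedding $\Hom_{R_{\fp}}(R_{\fp}/\fp R_{\fp},\overline{R}_{\fp}/R_{\fp})$ into $\Ext^1_{R_{\fp}}(R_{\fp}/\fp R_{\fp},R_{\fp})$ via the long exact sequence of $0\to R_{\fp}\to\overline{R}_{\fp}\to\overline{R}_{\fp}/R_{\fp}\to 0$, and finally excludes $\depth(R_{\fp})=0$ by a contradiction with $Q(R_{\fp})=R_{\fp}$. Your identity $\fp=(R:_R\bar x)=(sR:_R a)$ delivers the regular element $s\in\fp$ and the membership $\fp\in\Ass(R/sR)$ simultaneously, so $\depth(R_{\fp})=1$ comes out in one step with no case analysis; it also makes explicit the fact, which the paper later imports from Matsumura--Ogoma in Fact \ref{mat}, that such $\fp$ are associated primes of principal ideals generated by non-zerodivisors. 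For (ii), the paper uses reducedness and prime avoidance to see that $\fp$ contains a regular element and then quotes \cite[Corollary 2.3]{wol} for reflexivity; you instead reprove that statement from scratch by identifying $\fp^{\ast}$ and $\fp^{\ast\ast}$ with the fractional ideals $(R:_Q\fp)$ and $(R:_Q(R:_Q\fp))$ and collapsing the latter to $\fp$ using only the two elements $1$ and $\bar x$ of $(R:_Q\fp)$. Two remarks: the identification $\Hom_R(N,R)\cong(R:_Q N)$ needs only that $N$ is finitely generated and contains a non-zerodivisor of $R$ --- such an element is a unit of $Q(R)$ for \emph{any} ring, so $N\otimes_R Q=Q$ regardless --- hence reducedness is not what makes that step work; and since your regular element $s\in\fp$ comes from the representation $\bar x=a/s$ rather than from prime avoidance over $\Min(R)$, your proof of (ii) never actually invokes the reduced hypothesis, so you have proved a marginally stronger statement. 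The paper's version buys brevity through the citation; yours buys a self-contained, elementary argument.
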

	
	\begin{proof} i)
Let $\fp\in\Ass(\frac{\overline{R}}{R})$.	If $\fp$ is a minimal prime ideal of $R$. Then 
$R_{\fp}$ is artinian local ring, and the set of non-zerodivisor elements is just the invertible elements.
Thus, its  total ring of fractions is  $R_{\fp}$.  By definition, $R_{\fp}=\overline{R_{\fp}}$, i.e., 
$\fp\not\in\Supp(\frac{\overline{R}}{R})$. We may assume that $\Ht(\fp)>0$.	By definition, $\fp R_{\fp}\in\Ass(\overline{R}_{\fp}/R_{\fp})$.
	This implies that  $\Hom_{R_{\fp}}(R_{\fp}/\fp R_{\fp},\overline{R}_{\fp}/R_{\fp})\neq 0$.
	The long exact sequence induced by $0\to R_{\fp}\to \overline{R}_{\fp}\to \overline{R}_{\fp}/R_{\fp}\to 0$  shows that
	 $  0\neq\Hom_{R_{\fp}}(R_{\fp}/\fp R_{\fp},\overline{R}_{\fp}/R_{\fp})\subset\Ext_{R_{\fp}}^1(R_{\fp}/\fp R_{\fp},R_{\fp}).$ 
	We conclude that depth of $R_{\fp}$ is at most one.
Now we show that $\depth(R_{\fp})=1$. If this is not the case then we should have $\depth(R_{\fp})=0$.
In this case any elements of $\fp R_{\fp}$ is zerodivisor. Thus, $Q(R_{\fp})=R_{\fp}$. From, this $\frac{\overline{R_{\fp}}}{R_{\fp}}=0$,
i.e.,  $\fp\notin\Supp(\frac{\overline{R}}{R})$. This is a contradiction, because
$\Ass(-)\subset \Supp(-)$. In sum, $\depth(R_{\fp})=1$, as claimed.

ii) Let $\fp\in\Ass(\frac{\overline{R}}{R})$. Suppose on the way of contradiction
	that
	$\fp\subset\bigcup_{\fq\in\Ass(R)}\fq$. By prime avoidance, $\fp\subseteq \fq$ for some 
	$\fq\in\Ass(R)$. Since $R$ is reduced, $\Ass(R)=\min(R)$. Thus, $\fp=\fq$, e.g. $\fp R_{\fp}\in\Ass(R_{\fp})$.
	Therefore, $\depth(R_{\fp})=0$. By part i) we know $\depth(R_{\fp})=1$.
	This contradiction says that 	$\fp\nsubseteqq\bigcup_{\fq\in\Ass(R)}\fq$.
	In particular, $\fp$ contains a regular element and that  $\depth(R_{\fp})=1$.
	Under these assumptions, \cite[Corollary 2.3]{wol} indicates that $\fp$ is reflexive.
	\end{proof}
Recall that
Serre's $(S_n)$ condition  indicates 
$\depth(R_{\fp})\geq\min\{n,\Ht(\fp)\}$.
Serre's $(R_n)$ condition
means $R_{\fp}$ is regular for all prime ideal $\fp$ of height at most $n$.  He  characterized normality in terms of   $(S_2)$ and $(R_1)$, see \cite[Theorem 5.8.6]{EGA}.
\begin{definition}
	\begin{enumerate}
		\item[i)]
By  $(G_n)$ we mean  that $R_{\fp}$  is Gorenstein for any $\fp$ of height at most  $n$.	
\item[ii)] Recall from \cite{wol} that a ring $R$ is called quasi-normal  if it is  $(S_2)$ and $(G_1)$.
\end{enumerate}
\end{definition}

\begin{remark}
	More intrinsic, in a quasi--normal
 ring   principal
ideals have a unique representation as an intersection of irreducible ideals. To find a geometric picture, see \cite[Theorem 1.12]{har}. More interestingly, Hartshorne replaced $(R_1)$ with $(G_1)$
in order to recover a result of Max Noether who found the largest possible dimension of a linear system of given degree  
on a plane curve  of fixed degree.
\end{remark}

 \begin{example}\label{ob1}
	There is a $2$-dimensional quasi-normal local domain $(R,\fm)$  such that
	$\Spec(R)^1= \Ass(\frac{\overline{R}}{R})$.  In particular, $\Ass(\frac{\overline{R}}{R})$ is infinite.
\end{example}

\begin{proof}
	Let $R$ be any local integral domain such that for any non zero prime ideal $\fp$
	the ring $R_{\fp}$ is not normal. By \cite[Example 2.7]{nish} there is a 2-dimensional complete-intersection ring equipped with   that property. Since localization commutes with integral closure
	we deduce that $\Spec(R)\setminus\{0\}=\Supp(\frac{\overline{R}}{R})$. 
	\begin{enumerate}
		\item[Fact A):]  Let $ R$ be a Noetherian ring and  $M$ be any module. Then any $\fp\in\Supp(M)$ which is minimal among the elements of $\Supp(M)$ is an element of $\Ass(M)$.
	\end{enumerate}
	By applying Fact A) we see that 	every height one prime ideal is an associated prime of $\frac{\overline{R}}{R}$. 
	Since $\depth(R)=2$, and in view of  Proposition \ref{p}(i), we see $\fm\notin\Ass(\frac{\overline{R}}{R})$. So, $\Spec(R)^1= \Ass(\frac{\overline{R}}{R})$.
	
	To see the particular case, one may apply  Ratliff's weak existence theorem (see \cite[Theorem 31.2]{Mat})
	to find infinitely many height-one prime ideals.
\end{proof}

\begin{remark}
There is a 	  three-dimensional local domain $R$ 
with   infinity many prime ideals $\fp$
of height two such that $\depth(R_{\fp}) =1$, see \cite[Proposition 3.5]{fr}. In particular,  $\Ass^{\geq 2}(\frac{\overline{R}}{R})$
is infinite. 
\end{remark}

\begin{proposition}\label{pcr2}
	Let $R$ be a  ring which is $(S_2)$. Then $\Ass(\frac{\overline{R}}{R})^{\geq 2}=\{\fp\in\Spec(R):\fp^{\ast\ast}=\fp\}^{\geq 2}=\emptyset$.
\end{proposition}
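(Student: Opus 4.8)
The plan is to prove the two asserted vanishings separately, reading off the first from Proposition \ref{p}(i) and establishing the second by a direct dualization computation after localizing.

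For the equality $\Ass(\frac{\overline{R}}{R})^{\geq 2}=\emptyset$, I would argue by contradiction. If there were $\fp\in\Ass(\frac{\overline{R}}{R})$ with $\Ht(\fp)\geq 2$, then Proposition \ref{p}(i) would give $\depth(R_{\fp})=1$, while the hypothesis that $R$ is $(S_2)$ forces $\depth(R_{\fp})\geq\min\{2,\Ht(\fp)\}=2$. This is immediate and uses only part (i) of the previous proposition, which is valid for an arbitrary ring.

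For the equality $\{\fp\in\Spec(R):\fp^{\ast\ast}=\fp\}^{\geq 2}=\emptyset$, suppose toward a contradiction that $\fp$ is a prime with $\fp^{\ast\ast}=\fp$ and $\Ht(\fp)\geq 2$. First I would localize at $\fp$: since Serre's conditions localize, $R_{\fp}$ is again $(S_2)$, and since $\Hom$ (hence the biduality map) commutes with localization for finitely generated modules over a noetherian ring, the maximal ideal $\fm:=\fp R_{\fp}$ of the local ring $R_{\fp}$ still satisfies $\fm^{\ast\ast}=\fm$, with $\dim R_{\fp}=\Ht(\fp)\geq 2$. Thus it suffices to rule out a reflexive maximal ideal in a local $(S_2)$ ring of dimension at least two. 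The core computation is the identification of $\fm^{\ast}$: because $\dim R_{\fp}\geq 2$ and $R_{\fp}$ is $(S_2)$ we have $\depth(R_{\fp})\geq 2$, so with $k$ the residue field one gets $\Hom(k,R_{\fp})=(0:_{R_{\fp}}\fm)=0$ and $\Ext^1(k,R_{\fp})=0$. Applying $\Hom(-,R_{\fp})$ to $0\to\fm\to R_{\fp}\to k\to 0$ then yields an isomorphism $R_{\fp}\xrightarrow{\ \sim\ }\fm^{\ast}$, whence $\fm^{\ast\ast}\cong (R_{\fp})^{\ast}=R_{\fp}$. Combining this with $\fm^{\ast\ast}=\fm$ gives $\fm\cong R_{\fp}$, so $\fm$ is free of rank one, hence principal generated by a nonzerodivisor; Krull's principal ideal theorem then forces $\Ht(\fm)\leq 1$, contradicting $\dim R_{\fp}\geq 2$.

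I expect the main obstacle to be the reduction to the local ring together with the verification $\fm^{\ast}\cong R_{\fp}$: one must check that biduality commutes with localization so that the reflexivity hypothesis genuinely survives, and that the Ext-vanishing $\Ext^1(k,R_{\fp})=0$ (equivalently $\depth R_{\fp}\geq 2$) is precisely what kills the connecting map in the long exact sequence. Conceptually this step reflects the general principle that over an $(S_2)$ ring any dual module $M^{\ast}$ is a second syzygy and therefore satisfies $(S_2)$, so a reflexive ideal cannot have depth one at a height-$\geq 2$ prime; the explicit computation above is simply the most transparent way to realize that principle for $\fm$ and to contradict the fact that the maximal ideal of such a ring has depth one.
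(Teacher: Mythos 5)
Your proposal is correct. The first equality is handled exactly as in the paper: Proposition \ref{p}(i) gives $\depth(R_{\fp})=1$ for any $\fp\in\Ass(\frac{\overline{R}}{R})$, which is incompatible with $(S_2)$ once $\Ht(\fp)\geq 2$. For the second equality your route is genuinely different from the paper's. The paper invokes the general principle that over a local ring of depth at least two every dual module $\Hom_{R_{\fp}}(-,R_{\fp})$ has depth at least two, concludes $\depth_{R_{\fp}}(\fp R_{\fp})\geq 2$ from reflexivity, and then contradicts this by the local cohomology computation $0\neq \HH^0_{\fp R_{\fp}}(R_{\fp}/\fp R_{\fp})\hookrightarrow \HH^1_{\fp R_{\fp}}(\fp R_{\fp})$, which forces $\depth_{R_{\fp}}(\fp R_{\fp})\leq 1$. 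You instead compute the dual explicitly: from $\Ext^0(k,R_{\fp})=\Ext^1(k,R_{\fp})=0$ and the sequence $0\to\fm\to R_{\fp}\to k\to 0$ you get $\fm^{\ast}\cong R_{\fp}$, hence $\fm\cong\fm^{\ast\ast}\cong R_{\fp}$, so $\fm$ would be principal and generated by a nonzerodivisor, contradicting $\Ht(\fm)\geq 2$ by Krull's principal ideal theorem. Both arguments hinge on the same input ($\depth R_{\fp}\geq 2$ from $(S_2)$, plus reflexivity surviving localization), but yours is more elementary and self-contained --- it avoids the ``depth of duals'' lemma and the local cohomology step entirely --- and in this special case it yields the slightly sharper conclusion that a reflexive maximal ideal in such a ring would have to be free of rank one. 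The paper's version has the advantage of exhibiting the mechanism that works for arbitrary reflexive ideals (duals are second syzygies, hence $(S_2)$), which is the principle you correctly identify in your closing remarks.
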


\begin{proof}We may assume that $R$ is of dimension $d\geq 2$.
 Let $\fp\in\Ass(\frac{\overline{R}}{R})$. 
	In view of Proposition \ref{p}, $\depth(R_{\fp})=1$. Since $R$ is   $(S_2)$, $1=\depth(R_{\fp})\geq\min\{2,\Ht(\fp)\}$. Thus, $\Ht(\fp)=1$, and so $\Ass(\frac{\overline{R}}{R})^{\geq2}=\emptyset$.
Here, we prove $\{\fp\in\Spec(R):\fp^{\ast\ast}=\fp\}^{\geq 2}=\emptyset$. Suppose on the way  of contradiction that there is  a  reflexive prime ideal
	$\fp$ of height at least two.
	Since $R$ is   $(S_2)$, $\depth(R_{\fp})\geq \min\{2,\Ht(\fp)\}=2$. We know that the  $R_{\fp}$-module  $\fp R_{\fp}$
	is reflexive.
	Recall that for any $R_{\fp}$-module $(-)$ we have $\depth_{R_{\fp}}(\Hom_{R_{\fp}}(-,{R_{\fp}}))\geq 2$.
	We apply this to see $\depth_{R_{\fp}}(\fp R_{\fp})=\depth_{R_{\fp}}((\fp R_{\fp}^\ast)^\ast)\geq 2$.
	The exact sequence $ 0\to\fp R_{\fp}\to   R_{\fp}\to \frac{ R_{\fp}}{\fp R_{\fp}}\to 0$ 
	induces  $0=\HH^0_{\fp R_{\fp}}(R_{\fp})\to  \HH^0_{\fp R_{\fp}}(\frac{ R_{\fp}}{\fp R_{\fp}})\hookrightarrow  \HH^1_{\fp R_{\fp}}( \fp R_{\fp}).$ 
	Since $\HH^0_{\fp R_{\fp}}(\frac{ R_{\fp}}{\fp R_{\fp}})=\frac{ R_{\fp}}{\fp R_{\fp}}\neq 0$
	we deduce that the depth of  $ \fp R_{\fp}$ is at most one, which is a contradiction. So, 
	$\{\fp\in\Spec(R):\fp^{\ast\ast}=\fp\}^{\geq 2}=\emptyset$.
\end{proof}

\begin{theorem}\label{cr2}
	Let $R$ be a reduced ring of dimension $d\geq 2$. The following conditions are equivalent:
	\begin{enumerate}
		\item[i)] $R$ is $(S_2)$. 
		\item[ii)] $\{\fp\in\Spec(R):\fp^{\ast\ast}=\fp\}^{\geq 2}=\emptyset$.
		\item[iii)] $\Ass(\frac{\overline{R}}{R})^{\geq 2}=\emptyset$. 
		\end{enumerate}
In particular, if one of these hold, then
$\Spec(R)$ is connected
in codimension one.\end{theorem}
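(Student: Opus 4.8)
The plan is to establish the equivalence along the cycle (i) $\Rightarrow$ (ii) $\Rightarrow$ (iii) $\Rightarrow$ (i), where the first two implications are immediate from the results already proved and all the work sits in the last one. For (i) $\Rightarrow$ (ii): this is exactly the content of Proposition \ref{pcr2}, which asserts that $(S_2)$ forces both $\{\fp\in\Spec(R):\fp^{\ast\ast}=\fp\}^{\geq 2}$ and $\Ass(\frac{\overline{R}}{R})^{\geq 2}$ to be empty. For (ii) $\Rightarrow$ (iii): since $R$ is reduced, Proposition \ref{p}(ii) gives the inclusion $\Ass(\frac{\overline{R}}{R})\subseteq\{\fp:\fp^{\ast\ast}=\fp\}$, and intersecting with the height-$\geq 2$ part yields $\Ass(\frac{\overline{R}}{R})^{\geq 2}\subseteq\{\fp:\fp^{\ast\ast}=\fp\}^{\geq 2}$. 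Hence (ii) forces (iii), and everything reduces to proving (iii) $\Rightarrow$ (i).

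I would argue (iii) $\Rightarrow$ (i) by contraposition. Suppose $R$ is not $(S_2)$. Because $R$ is reduced we have $\Ass(R)=\Min(R)$, so $\depth(R_\fp)\geq 1$ at every prime of positive height; consequently the failure of $(S_2)$ produces a prime $\fp$ with $\Ht(\fp)\geq 2$ and $\depth(R_\fp)=1$. Localizing at $\fp$ -- using that normalization commutes with localization and that membership in $\Ass(-)$ is detected after localization -- I reduce to the following local statement: if $(R,\fm)$ is reduced local with $\depth(R)=1$ and $\dim R\geq 2$, then $\fm\in\Ass(\frac{\overline{R}}{R})$, that is, $\depth_\fm(\frac{\overline{R}}{R})=0$. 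Granting this, $\fp\in\Ass(\frac{\overline{R}}{R})$ with $\Ht(\fp)\geq 2$, contradicting (iii).

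To prove the local statement I would feed the short exact sequence $0\to R\to\overline{R}\to \frac{\overline{R}}{R}\to 0$ into the depth inequality $\depth(R)\geq\min\{\depth_\fm(\overline{R}),\ \depth_\fm(\frac{\overline{R}}{R})+1\}$. Since $\overline{R}$ is normal it satisfies $(S_2)$, and the goal is to conclude $\depth_\fm(\overline{R})\geq 2$; then $\depth(R)=1$ forces $\depth_\fm(\frac{\overline{R}}{R})=0$, which is what I want. The main obstacle is precisely this depth estimate for $\overline{R}$. Writing $\depth_\fm(\overline{R})=\min_{\fn}\depth(\overline{R}_\fn)$ over the maximal ideals $\fn$ of $\overline{R}$ lying over $\fm$, the bound $\depth(\overline{R}_\fn)\geq\min\{2,\Ht(\fn)\}$ delivers $\geq 2$ as soon as every such $\fn$ has height $\geq 2$, which is the case when $R$ is equidimensional (and is automatic when the normalization is module finite, as in the complete local setting of the applications). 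In the non-equidimensional case a one-dimensional component $\overline{R/\fp_i}$ can create a height-one maximal ideal of $\overline{R}$ and the estimate breaks; I would handle this through the decomposition $\overline{R}=\prod_i\overline{R/\fp_i}$, noting that since $R$ is local its spectrum is connected, so the components must cross at $\fm$, and already the cokernel of $R\hookrightarrow\prod_i R/\fp_i$ is supported at $\fm$ and contributes $\fm$ to $\Ass(\frac{\overline{R}}{R})$. An alternative packaging runs the same argument through the $(S_2)$-ification $R\subseteq R^{(S_2)}\subseteq\overline{R}$, whose cokernel $R^{(S_2)}/R$ is nonzero exactly when $R$ fails $(S_2)$ and has all associated primes of height $\geq 2$; this hides the depth bookkeeping but shifts the difficulty onto the inclusion $R^{(S_2)}\subseteq\overline{R}$.

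Finally, once $(S_2)$ is known, the concluding assertion that $\Spec(R)$ is connected in codimension one is Hartshorne's connectedness theorem for $(S_2)$ rings and requires nothing further.
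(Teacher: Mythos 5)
Your cycle (i)$\Rightarrow$(ii)$\Rightarrow$(iii)$\Rightarrow$(i) is exactly the paper's, and the first two implications are handled identically (Proposition \ref{pcr2} and Proposition \ref{p}(ii)). Your reduction of (iii)$\Rightarrow$(i) is also the right first move: the failure of $(S_2)$ for a reduced ring produces $\fp$ with $\Ht(\fp)\geq 2$ and $\depth(R_{\fp})=1$, and one must show such a $\fp$ lies in $\Ass(\frac{\overline{R}}{R})$ --- this is precisely Fact \ref{mat} in the paper. The gap is in your proof of that local statement. The crucial step is $\depth_{\fm}(\overline{R})\geq 2$, i.e.\ $\HH^1_{\fm}(\overline{R})=0$, and you do not actually establish it in the generality of the theorem. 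First, the theorem does not assume $R$ is Mori: $\overline{R}$ need not be module-finite over $R$, nor even Noetherian, so the formula $\depth_{\fm}(\overline{R})=\min_{\fn}\depth(\overline{R}_{\fn})$ and the appeal to Serre's criterion for $\overline{R}$ are not available as stated. Second, even when $\overline{R}$ is module-finite, knowing that $R$ is equidimensional does not by itself give $\Ht(\fn)\geq 2$ for every maximal ideal $\fn$ of $\overline{R}$ over $\fm$: heights can drop along an integral extension when going-down fails, so this needs an argument. Third, your repair in the non-equidimensional case is only asserted: the cokernel of $R\hookrightarrow\prod_i R/\fp_i$ is supported on \emph{all} pairwise intersections of components, not just at $\fm$, and the claim that $\fm$ is an \emph{associated} prime of it (rather than merely in its support) is exactly what has to be proved; it is true when some component is one-dimensional, but you do not show it, and it does not cover the equidimensional-but-badly-behaved case.

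The paper sidesteps all of this with a different device: from $\depth(R_{\fp})=1$ it extracts a non-zerodivisor $x$ with $\fp\in\Ass(R/xR)$, observes that $\Ht(\fp)\geq 2$ makes $\fp$ an \emph{embedded} prime of the principal ideal $(x)$, and then invokes the Matsumura--Ogoma theorem \cite[Theorem 2.6]{m}, which identifies $\Ass(\frac{\overline{R}}{R})$ with $\Sing^1(R)\cup E_1$, $E_1$ being the set of embedded primes of principal ideals. That theorem is exactly the black box your depth computation would need to re-prove; without it (or an equivalent input such as excellence plus equidimensionality to control $\HH^1_{\fm}(\overline{R})$), your (iii)$\Rightarrow$(i) is a plausible sketch rather than a proof. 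The concluding connectedness assertion via Hartshorne is fine and matches the paper's citation of \cite[Corollary 2.4]{h}.
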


\begin{proof} $i)\Rightarrow ii)$: This is in Proposition \ref{pcr2}.

$ii)\Rightarrow iii)$: This is in  Proposition \ref{p}(ii).

$iii)\Rightarrow i)$: 
Assume  on the way of contradiction that $R$ is  not $(S_2)$.  
	Let $\fp\in\Spec(R)$   be such that $\depth(R_{\fp})<\min\{2,\Ht(\fp)\}$.
	By $(R_0)$ and $(S_1)$, we know  $\Ht(\fp)\geq 2$.
	Since $R$ is reduced, $\Ass(R)=\min(R)$. From this, $\depth(R_{\fp})\geq1$.
	In sum, 
	$1\leq\depth(R_{\fp})<\min\{2,\Ht(\fp)\}\leq 2$.
	This says that   $\depth(R_{\fp})=1$. Since $\depth(R_{\fp})=1$, there is $x/1\in \fp R_{\fp}$ such that $\depth(\frac{R_{\fp}}{x R_{\fp}})=0$.  	By definition, $\fp R_{\fp}\in\Ass(\frac{R}{(x)})_\fp$ and so $\fp\in\Ass(R/xR)$. Since $\Ht(\fp)\geq 2$, $\fp$ is not minimal over $xR$.  In the light of \cite[Theorem 2.6]{m} we see $\Ass(\frac{\overline{R}}{R})=\Sing^1(R)\cup E_1$
	where $E_1$ is the set of all embedded  primes of principal ideals.
So $\fp\in \Ass(\frac{\overline{R}}{R})^{\geq2}$, a contraction.

The particular case is in \cite[Corollary 2.4]{h}.
\end{proof}

The main application is as follows:

 \begin{theorem}\label{wv}
	Let $R$ be an integral domain  and suppose	a nonzero ideal  is reflexive
	if and only if all of its associated primes are of height one. Then  $R$ is  quasi-normal.
\end{theorem}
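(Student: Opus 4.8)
The plan is to recover the two defining conditions of quasi-normality, namely $(S_2)$ and $(G_1)$, from the hypothesis that reflexivity of a nonzero ideal is equivalent to all its associated primes having height one. The $(S_2)$ part should follow almost directly from the machinery already in place, while the $(G_1)$ part is where the real work lies.

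**For $(S_2)$:** Since $R$ is an integral domain it is reduced, so Theorem \ref{cr2} applies once we pin down $\dim R$; the case $d\leq 1$ is trivial (a one-dimensional domain is automatically $(S_2)$), so assume $d\geq 2$. By Theorem \ref{cr2} it suffices to prove $\{\fp\in\Spec(R):\fp^{\ast\ast}=\fp\}^{\geq 2}=\emptyset$. Suppose toward a contradiction that $\fp$ is a reflexive prime of height at least two. A prime ideal, viewed as an ideal, has $\Ass(\fp)=\{\fp\}$ (it is a prime, hence a torsion-free, in fact domain-like, quotient); more precisely $R/\fp$ is a domain so $\fp$ is $\fp$-primary and its only associated prime is $\fp$ itself. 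Thus the ideal $\fp$ is a nonzero reflexive ideal whose unique associated prime has height $\geq 2$. By the \emph{only if} direction of the hypothesis (reflexive $\Rightarrow$ all associated primes of height one) we get $\Ht(\fp)=1$, a contradiction. Hence no such $\fp$ exists and $R$ is $(S_2)$.

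**For $(G_1)$:** I must show $R_{\fp}$ is Gorenstein for every prime $\fp$ of height one. Localizing, I reduce to showing that a one-dimensional local domain in which ``reflexive $\Leftrightarrow$ all associated primes of height one'' holds is Gorenstein. The natural route is to use the \emph{if} direction of the hypothesis: in a one-dimensional local domain $(R_{\fp},\fp R_{\fp})$ every nonzero ideal has all associated primes of height $\leq 1$, so \emph{every} nonzero ideal should be reflexive. A one-dimensional local domain in which every ideal is reflexive is exactly a Gorenstein ring — indeed, reflexivity of the maximal ideal forces the canonical module to be free of rank one, equivalently the type of $R_{\fp}$ is one. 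I would make this precise by recalling that over a one-dimensional \CM local ring, $R$ is Gorenstein iff every ideal (equivalently the maximal ideal) is reflexive, so that the universal reflexivity supplied by the hypothesis yields Gorensteinness. Combining, $R$ is $(S_2)$ and $(G_1)$, hence quasi-normal by Definition.

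**The main obstacle** is the $(G_1)$ step, specifically justifying cleanly that the hypothesis localizes and that ``every nonzero ideal reflexive'' implies Gorenstein in the one-dimensional local setting. One has to check that the reflexivity hypothesis passes to the localization $R_{\fp}$ — reflexivity of ideals does not trivially localize, so I would argue instead that for an ideal $I\subseteq R_{\fp}$ one can clear denominators to an ideal of $R$ whose associated primes, after localizing back, have height one, invoking that $\Ht(\fp)=1$ caps the heights; then apply the global hypothesis and descend. Once localized, the equivalence between universal reflexivity and the Gorenstein property in dimension one is standard (it is the reflexivity criterion underlying \cite[Corollary 2.3]{wol} together with the characterization of Gorenstein via type one), so the crux is purely the bookkeeping of transferring the reflexivity hypothesis to height-one localizations.
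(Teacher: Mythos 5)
Your overall architecture coincides with the paper's: you get $(S_2)$ by showing no reflexive prime of height $\geq 2$ can exist and invoking Theorem \ref{cr2}, and $(G_1)$ by reducing to the fact (Bass) that a one-dimensional Cohen--Macaulay local ring all of whose ideals are reflexive is Gorenstein. The $(S_2)$ half is complete and correct, and matches the paper.

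The gap is in the step you yourself flag as the crux: transferring the hypothesis to $A:=R_{\fp}$ for $\Ht(\fp)=1$. Your proposed fix --- clear denominators from $J\subseteq A$ to an ideal $I\subseteq R$ and invoke ``$\Ht(\fp)=1$ caps the heights'' of its associated primes --- fails as stated. An arbitrary preimage $I$ with $IA=J$ can have associated primes of large height coming from primary components supported at primes not contained in $\fp$; these are invisible after localizing at $\fp$, but they prevent you from applying the global hypothesis ``all associated primes of height one $\Rightarrow$ reflexive'' to $I$. The height of $\fp$ bounds only those associated primes of $I$ that are contained in $\fp$. The missing idea (which is how the paper proceeds) is to take a minimal primary decomposition $I=\bigcap_{i=1}^{n}\fq_i$ with $\rad(\fq_1)=\fp$ and replace $I$ by the $\fp$-primary component $\fq_1$: since every other $\rad(\fq_i)$ meets $R\setminus\fp$, one still has $\fq_1A=IA=J$, while now $\Ass(R/\fq_1)=\{\fp\}$ consists of a single height-one prime, so $\fq_1$ is reflexive by the hypothesis, and reflexivity passes to localizations, giving $J$ reflexive. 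With that repair your argument becomes exactly the paper's proof.
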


\begin{proof} There is nothing to prove if $d:=\dim(R)=0$. Then we may assume that $d>0$.
	First, we show $R$  is $(S_2)$. If $d=1$, this is trivial,
	because $R$ is Cohen--Macaulay. In the case $\dim(R)>1$, we know by the assumption that
	$$\{\fp\in\Spec(R):\fp^{\ast\ast}=\fp\}^{\geq 2}=\emptyset.$$
In view of Theorem \ref{cr2} we observe that $R$ is $(S_2)$. Thus, things are reduced to showing that $R$ is $(G_1)$.
Let $\fp$ be a height one prime ideal. The ring $A:=R_{\fp}$ is a $1$-dimensional Cohen--Macaulay local ring. Now, we recall the following result of Bass (see \cite[Theorem (6.2)]{bass}):	\begin{enumerate}
	\item[Fact A):]  Let $A$ be a $1$-dimensional Cohen--Macaulay local ring
	such that all of its ideals are reflexive. Then $A$ is Gorenstein.
\end{enumerate}
Let $J$ be any nonzero and proper ideal of $A$. Any ideal of $A$ is extended, i.e., there is an ideal $I\subset \fp$ of $R$ such that
$IA=J$. Since $I$ is nonzero and height of $\fp$ is one, we deduce that $\fp\in\min(I)$. Let $I=\cap_{i=1}^n \fq_i$ be any minimal primary decomposition  of $I$ and set $\fp_i:=\rad(\fq_i)$ which is a prime ideal. We may assume that $\rad(\fq_1)=\fp$.  By definition, $\fq_1$ is a  $\fp$-primary  ideal. If $\fp_i\subset  \fp$, then $\fp=\fp_i$.
 Set $S:=R\setminus \fp$, and note that  $\fp_i\cap S\neq \emptyset$ for all $i\neq 1$.  It turns out that  $J=S^{-1}I=\cap_{i=1}^n S^{-1}\fq_i=S^{-1}\fq_1$  is a minimal primary decomposition, because $S\cap\fp_i\neq \emptyset$ for all $i>1$. After replacing $I$ with $\fq_1$ we may assume in addition that 
$I$ is $\fp$-primary. In particular, all of its associated prime ideals are of height one.
By the assumption, $I$ is reflexive. Any localization of a reflexive module is reflexive. So, $J$ is reflexive. We proved that
any ideal of $A$ is reflexive. In the light of Fact A) we deduce that $A$ is Gorenstein. By definition, $(G_1)$ and $(S_2)$ mean the ring is quasi-normal.
\end{proof}



\begin{fact}(Matsumura--Ogoma)\label{mat}
 If $R$ is reduced, then 	 $\Ass(\frac{\overline{R}}{R})^{\geq2}=\{\fp\in\Spec(R):\depth(R_{\fp})=1\}^{\geq2}$.
 	\end{fact}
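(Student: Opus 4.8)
The plan is to establish the two inclusions separately and to observe that the heavy lifting has essentially been done already, both by Proposition \ref{p} and inside the proof of Theorem \ref{cr2}. For brevity I would write $D:=\{\fp\in\Spec(R):\depth(R_\fp)=1\}$, so that the claim is $\Ass(\frac{\overline{R}}{R})^{\geq2}=D^{\geq2}$.

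For the inclusion $\Ass(\frac{\overline{R}}{R})^{\geq2}\subseteq D^{\geq2}$ there is nothing new to prove: if $\fp\in\Ass(\frac{\overline{R}}{R})$ has $\Ht(\fp)\geq2$, then Proposition \ref{p}(i) already forces $\depth(R_\fp)=1$, so $\fp\in D^{\geq2}$. Notably this direction does not even use that $R$ is reduced.

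The reverse inclusion $D^{\geq2}\subseteq\Ass(\frac{\overline{R}}{R})^{\geq2}$ is the substance, and I would recycle verbatim the mechanism from the step $iii)\Rightarrow i)$ of Theorem \ref{cr2}. Fix $\fp$ with $\Ht(\fp)\geq2$ and $\depth(R_\fp)=1$. Since the depth is exactly one, I can choose $x/1\in\fp R_\fp$ completing a maximal regular sequence, so that $\depth(R_\fp/xR_\fp)=0$, i.e. $\fp R_\fp\in\Ass_{R_\fp}(R_\fp/xR_\fp)$. Clearing the denominator replaces $x$ by an element of $R$ generating the same ideal locally, and then compatibility of $\Ass$ with localization upgrades this to $\fp\in\Ass_R(R/xR)$. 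Because $\Ht(\fp)\geq2$, Krull's principal ideal theorem forbids $\fp$ from being minimal over the principal ideal $xR$, so $\fp$ is an embedded prime of $xR$; in the notation of \cite[Theorem 2.6]{m} this places $\fp$ in the set $E_1$. That theorem records the decomposition $\Ass(\frac{\overline{R}}{R})=\Sing^1(R)\cup E_1$ for reduced $R$, whence $\fp\in\Ass(\frac{\overline{R}}{R})$, and combined with $\Ht(\fp)\geq2$ we conclude $\fp\in\Ass(\frac{\overline{R}}{R})^{\geq2}$. Here reducedness enters only through the applicability of \cite[Theorem 2.6]{m}.

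The delicate points, and where I would concentrate, are the two bookkeeping steps of the reverse inclusion: first, passing from the local membership $\fp R_\fp\in\Ass_{R_\fp}(R_\fp/xR_\fp)$ to the global $\fp\in\Ass_R(R/xR)$ through a chosen representative $x\in R$ and the localization formula for associated primes; and second, certifying that $\fp$ is genuinely embedded rather than minimal over $xR$, which is exactly where the hypothesis $\Ht(\fp)\geq2$ is consumed. The real content of the statement is furnished by the cited decomposition \cite[Theorem 2.6]{m}, so the proof amounts to an application of it; the main obstacle is simply to manufacture the principal ideal witnessing membership in $E_1$, after which everything follows formally.
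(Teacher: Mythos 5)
Your proposal is correct and follows essentially the same route as the paper: the inclusion $\Ass(\frac{\overline{R}}{R})^{\geq2}\subseteq\{\fp:\depth(R_{\fp})=1\}^{\geq2}$ is Proposition \ref{p}(i), and the reverse inclusion is obtained, exactly as in the paper (which simply points back to the step $iii)\Rightarrow i)$ of Theorem \ref{cr2}), by exhibiting $\fp$ as an embedded prime of a principal ideal and invoking the decomposition $\Ass(\frac{\overline{R}}{R})=\Sing^1(R)\cup E_1$ of \cite[Theorem 2.6]{m}. Your version is in fact slightly more careful about the localization bookkeeping and about where reducedness and $\Ht(\fp)\geq 2$ are used.
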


 \begin{proof} Take $\fp$ be of height greater than one such that $\depth(R_{\fp})=1$.
 By the above proof there is $x/1\in \fp R_{\fp}$ such that   $\fp\in\Ass(R/xR)$, i.e.,
 $\fp\in E_1\subset\Ass(\frac{\overline{R}}{R})^{\geq2}$. The reverse inclusion
 is in Proposition \ref{p}.
 \end{proof}

Here, we present a connection to  $(S_2)$ locus:

\begin{corollary}(Grothendieck)\label{gr}
	Let $R$ be excellent. Then $\{\fp\in\Spec(R):R_{\fp} \emph{ is }  (S_2)\}$ is open.
\end{corollary}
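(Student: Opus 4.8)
The plan is to exhibit the non-$(S_2)$ locus as an explicit \emph{finite} union of Zariski-closed sets, the finiteness being forced by excellence. First I would reduce to the case that $R$ is reduced. The non-$(S_2)$ locus of a general $R$ splits into the points where $(S_1)$ already fails, which is $\bigcup_{\fq\in\Ass(R)}\V(\fq)$ taken over the finitely many embedded primes of $R$ and hence is closed, together with the genuinely codimension-$\geq 2$ phenomena that survive modulo the nilradical; so it is harmless to assume $R$ reduced. Now, a reduced excellent ring is Nagata, whence $\overline{R}$ is module-finite over $R$ and $\frac{\overline{R}}{R}$ is a finitely generated $R$-module. Consequently $\Ass(\frac{\overline{R}}{R})$, and a fortiori $\Ass(\frac{\overline{R}}{R})^{\geq 2}$, is a finite set; this finiteness is the engine of the argument.

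Next I would pin down the locus precisely. Since $R$ is reduced it satisfies $(R_0)$ and $(S_1)$, so $(S_2)$ can fail at a prime $\fq$ only when $\Ht(\fq)\geq 2$ and $\depth(R_\fq)=1$. By Fact \ref{mat} the collection of all such ``direct-failure'' primes is exactly the finite set $\Ass(\frac{\overline{R}}{R})^{\geq 2}$. Because $R_\fp$ is $(S_2)$ as a ring precisely when $(S_2)$ holds at every $\fq\subseteq\fp$, the ring $R_\fp$ fails $(S_2)$ if and only if some direct-failure prime is contained in $\fp$. Therefore
$$\{\fp\in\Spec(R):R_\fp \emph{ is not }(S_2)\}=\bigcup_{\fq\in\Ass(\frac{\overline{R}}{R})^{\geq 2}}\V(\fq),$$
which is a finite union of closed sets and hence closed. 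Its complement is the $(S_2)$ locus, which is therefore open, as claimed.

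The main obstacle is justifying the displayed identity, i.e. globalizing the pointwise criterion of Fact \ref{mat} to the ring-theoretic $(S_2)$ locus. One must confirm that ``$R_\fp$ is $(S_2)$'' is controlled exactly by the primes $\fq\subseteq\fp$ of height $\geq 2$ with $\depth(R_\fq)=1$ --- in particular that primes of height $\leq 1$ never obstruct $(S_2)$ over a reduced ring --- and that the heights of such $\fq$ are unchanged upon localizing at $\fp$. A secondary, more delicate point is the reduction to the reduced case: since $(S_2)$ for $R$ differs from $(S_2)$ for $R_{\mathrm{red}}$, one must separate out the $(S_1)$-failure locus coming from the finite set $\Ass(R)$ before the normalization machinery of Fact \ref{mat} and Theorem \ref{cr2} can be brought to bear.
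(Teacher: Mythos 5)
Your handling of the reduced case is correct and in fact tighter than the paper's. For reduced excellent $R$ you get $\overline{R}$ module-finite, hence $\Ass(\frac{\overline{R}}{R})^{\geq 2}$ finite, and Fact \ref{mat} identifies it with $\{\fq:\Ht(\fq)\geq 2,\ \depth(R_{\fq})=1\}$; since $(R_0)$ and $(S_1)$ are automatic, these are exactly the primes at which the $(S_2)$ inequality can fail, and since $(R_{\fp})_{\fq R_{\fp}}=R_{\fq}$ with unchanged height, your displayed identity holds and the non-$(S_2)$ locus is closed. The paper constructs the same closed set $\V(J)=\bigcup_i\V(\fp_i)$ with $J=\bigcap_i\fp_i$, but proves only the inclusion $\Spec(R)\setminus\V(J)\subseteq\{\fp:R_{\fp}\textrm{ is }(S_2)\}$, which is why it must run the whole argument through Nagata's openness criterion, producing for each $\fp$ a nonempty open subset of the $(S_2)$ locus of the domain $R/\fp$. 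Your equality makes that detour unnecessary --- but only for reduced rings.

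The genuine gap is the reduction to the reduced case. For $\Ht(\fq)\geq 2$ the condition is $\depth(R_{\fq})\geq 2$, and this depth lives in $R_{\fq}$, not in $(R_{\red})_{\fq}$; it is false that, away from the closed $(S_1)$-failure locus $\bigcup\V(\fq)$ (union over embedded primes), the non-$(S_2)$ locus of $R$ agrees with that of $R_{\red}$. Concretely, let $A=k[[x,y]]$ and $R=A\ltimes\fm_A$ be the idealization of the maximal ideal. Then $R$ is excellent and non-reduced with $R_{\red}\cong A$ regular; $\Ass(R)$ is the unique minimal prime, so $R$ is $(S_1)$ everywhere and your first closed piece is empty; yet $\depth(R)=\min\{\depth(A),\depth_A(\fm_A)\}=1<2=\dim R$, so $R$ fails $(S_2)$ at the closed point while $R_{\red}$ is $(S_2)$ everywhere. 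Thus the codimension-$\geq 2$ failures need not ``survive modulo the nilradical,'' and the normalization machinery cannot see them either: Fact \ref{mat} genuinely requires reducedness, and Proposition \ref{p}(i) gives only one inclusion in general. To cover non-reduced $R$ you need a separate device; the paper's choice is Nagata's criterion, which trades the global statement for a statement about each integral domain $R/\fp$, where your (and its) finiteness argument applies.
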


\begin{proof}
 Let $\fp\in\Spec(R)$.	By Nagata's criterion, it is enough to prove $\{P\in\Spec(\frac{R}{ \fp})|	(\frac{R}{ \fp})_P
	\emph{ is }  (S_2)\}$  contains a nonempty  open set. Recall that every homomorphic image of an excellent ring,   is again excellent. Then, without loss of generality we may and do assume that $R$ is an integral domain.	
Since $\overline{R}$ is finite over,  $\Ass(\frac{\overline{R}}{R})^{\geq 2}\subset \Ass(\frac{\overline{R}}{R})$
	is finite. Let $\{\fp_1,\ldots,\fp_n\}=\Ass(\frac{\overline{R}}{R})^{\geq 2}$. Define $J:=\bigcap_{i=1}^n \fp_i$. Since $R$ is domain, $J\neq 0$ and so $D:=\Spec(R)\setminus\V(J)$
	is nonempty and open. It remains to prove: 	\begin{enumerate}
		\item[Claim:] $D\subset \{\fp:R_{\fp} \emph{ is }  (S_2)\}$.
	\end{enumerate} If not,
	then there is $\fp\in D$ such that  $0<\depth(R_{\fp})<\min\{2,\Ht(\fp)\} \leq 2,$ i.e.,
	$\depth(R_{\fp})=1$ and $\Ht(\fp)\geq 2$. By the above corollary,  $\fp\in \Ass(\frac{\overline{R}}{R})^{\geq2}=\{\fp_i:1 \leq i \leq n\}$. Let
  $i$ be  such that $\fp=\fp_i\supseteq J$, i.e., $\fp\in\V(J)$. This  contradiction yields the claim.
	\end{proof}
The Cohen--Macaulay locus of $R$ is $\CM(R):=\{\fp\in\Spec(R):R_{\fp}\textit{ is Cohen--Macaulay}\}$.

\begin{lemma}\label{red}
Let $R$  be reduced. Then $\Spec(R)^2\setminus\CM(R)\subseteq\Ass(\frac{\overline{R}}{R})^{\geq2}.$
\end{lemma}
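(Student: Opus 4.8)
The plan is to reduce the claimed containment to the identification recorded in Fact~\ref{mat}. So fix a prime $\fp$ with $\Ht(\fp)=2$ and $R_{\fp}$ not Cohen--Macaulay; the goal is to place $\fp$ inside $\Ass(\frac{\overline{R}}{R})^{\geq 2}$. By Fact~\ref{mat} the target set is exactly $\{\fp\in\Spec(R):\depth(R_{\fp})=1\}^{\geq 2}$, so the entire argument collapses to the single numerical assertion that $\depth(R_{\fp})=1$, together with the trivial observation that $\Ht(\fp)=2\geq 2$.

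To pin down the depth I would bound it from both sides. Since $R$ is reduced, so is the localization $R_{\fp}$; a reduced ring satisfies $(S_1)$, which at $\fp$ gives $\depth(R_{\fp})\geq\min\{1,\dim R_{\fp}\}=1$ (equivalently, the associated primes of the reduced ring $R_{\fp}$ are its minimal primes, and as $\Ht(\fp)=2$ the maximal ideal $\fp R_{\fp}$ is not minimal, hence not associated). On the other hand $R_{\fp}$ is a two-dimensional local ring that fails to be Cohen--Macaulay, so $\depth(R_{\fp})<\dim(R_{\fp})=2$. These two inequalities leave only $\depth(R_{\fp})=1$. Hence $\fp$ lies in $\{\fp\in\Spec(R):\depth(R_{\fp})=1\}^{\geq 2}$, and invoking Fact~\ref{mat} we conclude $\fp\in\Ass(\frac{\overline{R}}{R})^{\geq 2}$, as desired.

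The serious content here is not in this argument but in Fact~\ref{mat} itself, which I am free to use; once that identification is available the lemma is essentially a depth count, so I expect no real obstacle beyond bookkeeping. The only point to watch is that reducedness is being used in two distinct roles: first to secure the lower bound $\depth(R_{\fp})\geq 1$ via $(S_1)$, and second as the standing hypothesis under which Fact~\ref{mat} is valid. Note also that nothing global about $R$ is invoked---no finiteness of $\overline{R}$ and no structure beyond reducedness---because the containment is verified prime by prime after passing to $R_{\fp}$.
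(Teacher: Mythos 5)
Your proof is correct and follows essentially the same route as the paper: both establish $\depth(R_{\fp})=1$ by combining the lower bound from $(S_1)$ (reducedness) with the upper bound from $\dim R_{\fp}=2$ and the failure of Cohen--Macaulayness, and then invoke Fact~\ref{mat} to conclude. No substantive difference.
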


\begin{proof}
 Let $\fp\in \Spec(R)^2\setminus\CM(R)$. Reduced rings are $(S_1)$.
	From this, $\depth(R_{\fp})>0$.
	Since $R_{\fp}$ is not Cohen--Macaulay and $\Ht({\fp})\geq 2$ we deuce that  $\depth(R_{\fp})\leq1$.
	So,   $\depth(R_{\fp})=1$.  
	By Fact \ref{mat}, $\fp\in \Ass(\frac{\overline{R}}{R})$.
\end{proof}
It may be  natural to find situations for which   the above inclusion becomes an equality.
To this end, a ring $R$ is called   almost Cohen--Macaulay   if $\gr(\fp, R) = \gr(\fp R_{\fp} , R_{\fp} )$ for every $\fp
\in\Spec(R)$, see \cite{kan}.

\begin{proposition}\label{acm}
	Let $R$ be  almost Cohen--Macaulay and of dimension $d\geq 2$. The following holds: \begin{enumerate}
		\item[i)] $\Ass(\frac{\overline{R}}{R})^{\geq2}\subseteq\Spec(R)^2\setminus\CM(R). $	
		\item[ii)] Suppose in addition $R$ is reduced then $\Ass(\frac{\overline{R}}{R})^{\geq2}=\Spec(R)^2\setminus\CM(R). $
		In particular,
		$\Ass(\frac{\overline{R}}{R})^{\geq2}$ is finite provided  $\CM(R)$ is open.
	\end{enumerate}   
\end{proposition}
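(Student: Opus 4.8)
The plan is to prove part i) first, since it carries the real content, and then to read off the equality in part ii) essentially for free by combining i) with Lemma \ref{red}; the only genuinely new ingredient is the finiteness clause, which I would handle last. Throughout, the single consequence I extract from the almost Cohen--Macaulay hypothesis is that $\depth(R_\fp)\geq\Ht(\fp)-1$ for every $\fp$ (equivalently $\dim R_\fp-\depth R_\fp\leq 1$); this is the form in which I will use it.

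For part i), I would take an arbitrary $\fp\in\Ass(\frac{\overline{R}}{R})^{\geq2}$ and first invoke Proposition \ref{p}(i) to get $\depth(R_\fp)=1$. Applying the almost Cohen--Macaulay inequality gives $\Ht(\fp)\leq\depth(R_\fp)+1=2$, and combined with $\Ht(\fp)\geq2$ this pins down $\Ht(\fp)=2$, so $\fp\in\Spec(R)^2$. Finally, since $\dim R_\fp=\Ht(\fp)=2>1=\depth(R_\fp)$, the local ring $R_\fp$ is not Cohen--Macaulay, i.e. $\fp\notin\CM(R)$; hence $\fp\in\Spec(R)^2\setminus\CM(R)$. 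Note this step needs only the almost CM hypothesis, not reducedness.

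For the equality in part ii), I would simply observe that the inclusion ``$\subseteq$'' is part i) (the almost CM hypothesis being in force), while the reverse inclusion ``$\supseteq$'' is exactly Lemma \ref{red}, which requires only that $R$ be reduced. Putting the two together yields $\Ass(\frac{\overline{R}}{R})^{\geq2}=\Spec(R)^2\setminus\CM(R)$.

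For the finiteness statement I would assume $\CM(R)$ is open, so that the non--Cohen--Macaulay locus $\Spec(R)\setminus\CM(R)$ is closed, say equal to $\V(I)$. The key point is that, because $R$ is reduced, every prime of height at most one lies in $\CM(R)$ (a reduced local ring of dimension $\leq1$ is Cohen--Macaulay), so every prime of $\V(I)$ has height at least two; consequently each height--two prime of $\V(I)$ must be minimal over $I$. Since $R$ is Noetherian, $\Min(I)$ is finite, whence $\Spec(R)^2\setminus\CM(R)=\V(I)\cap\Spec(R)^2\subseteq\Min(I)$ is finite, and by the displayed equality so is $\Ass(\frac{\overline{R}}{R})^{\geq2}$. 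I expect no hard step here; the main thing to be careful about is using the almost Cohen--Macaulay hypothesis in the correct depth--versus--height form in part i), and making sure it is reducedness (via the $\dim\leq1$ case) that excludes spurious height--one components of the non--CM locus in the finiteness argument.
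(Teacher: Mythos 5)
Your proposal is correct and follows essentially the same route as the paper: Proposition \ref{p}(i) plus the almost Cohen--Macaulay inequality $\Ht(\fp)\leq\depth(R_\fp)+1$ pins down $\Ht(\fp)=2$ for part i), Lemma \ref{red} supplies the reverse inclusion in part ii), and openness of $\CM(R)$ together with the fact that reducedness (i.e. $(S_1)$) forces the non--Cohen--Macaulay locus to have height at least two yields finiteness via $\Min$ of the defining ideal. Your write-up is if anything slightly more explicit than the paper's about why $\fp\notin\CM(R)$ and why the height-two primes of the non--CM locus are minimal over its defining ideal.
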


\begin{proof}
i) Let $\fp\in	\Ass(\frac{\overline{R}}{R})^{\geq2}$. By definition, $\Ht(\fp)\geq 2$. In view of Proposition \ref{p}, $\depth(R_{\fp})\leq1$. Since $R$  is almost Cohen--Macaulay we know that $\Ht(\fp)\leq \depth(R_{\fp})+1$. We plug this in the previous
inequalities  $2\leq\Ht(\fp)\leq \depth(R_{\fp})+1\leq2.$ From this, $\Ht(\fp)=2$. Thus,
$\fp\in \Spec(R)^2\setminus\CM(R)$.

ii) The first claim is in Lemma \ref{red}.
Now, suppose $\CM(R)=\Spec(R)\setminus\V(J)$ for some ideal $J$. Since $R$ is $(S_1)$, we see
that $\Ht(J)\geq 2$. It turns out that $\Spec(R)^2\setminus\CM(R)$ is finite, and the claim follows.
\end{proof}

	\begin{example}\label{ex3}
i)	Let $R:=k[[x^4,x^3y,xy^3,y^4]]$. Then $\Ass(\frac{\overline{R}}{R})^{\geq2}= \{\fm\}$.

ii)		
	Let $(R,\fm)$ be a $3$-dimension complete  reduced local ring of depth one. Then   $\Ass(\frac{\overline{R}}{R})^{\geq2}=\Spec(R)^{\geq2}\setminus\CM(R).$
\end{example}

\begin{proof}i) This  is trivial, because $2$-dimensional reduced rings are almost  Cohen--Macaulay.
	
	ii) The assumption implies that
	 $\CM(R)=\Spec(R)\setminus\V(J)$ for some ideal $J$. The height of $J$ should be two, because $R$ is reduced. Let $\{\fp_1,\ldots,\fp_t\}$
	 be the set of all minimal prime ideals of $J$. Note that each $\fp_i$ is of height two.
	 Let $\fp\in\Spec(R)$ be of height two. Then $R_{\fp}$ is Cohen--Macaulay if and only if $\fp\notin \min(J)$. From this, $\depth(R_{\fp_i})=1$. Thus, 
$\Ass(\frac{\overline{R}}{R})^{\geq2}=\{\fp_i\}_{i=1}^t\cup\{\fm\}.$ 
This is equal to $\Spec(R)^{\geq2}\setminus\CM(R).$
\end{proof}

\begin{remark}
Adopt the notation of Example \ref{ex3}(ii), and write $R=\frac{S}{I}$ where $S$ is regular of dimension $n$ and $I$
	is of height $n-3$. Let $\frak a_i:=\Ann_S(\Ext_S^{n+i-4}(R,S))$ where $1\leq i\leq 4$
	and set $J:=\frac{\frak a_2\frak a_3\frak a_4+\frak a_1\frak a_3\frak a_4+\frak a_1\frak a_2\frak a_3}{I}$. Then  $\Ass(\frac{\overline{R}}{R})^{\geq2}=\Min(J)\cup\{\fm\}.$ 
\end{remark}

A local ring $(R,\fm)$ is called  generalized Cohen--Macaulay if $\ell(\HH^{i}_{\fm}(R))<\infty$ for all $i<\dim R$.
As an example, the local ring at the vertex of an affine cone over a smooth projective variety is generalized Cohen--Macaulay.
For more details, see \cite{sch}.
\begin{proposition}\label{ge}
	Let  $(R,\fm)$  be a  complete local ring of dimension $d\geq 2$. If  $R$ is generalized Cohen--Macaulay, then  $\Ass(\frac{\overline{R}}{R})^{\geq2}\subseteq \{\fm\}.$  Suppose
	in addition $R$ is reduced. Then $\Ass(\frac{\overline{R}}{R})^{\geq2}= \{\fm\}$
if and only if $\depth(R)=1$. 
\end{proposition}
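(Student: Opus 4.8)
The plan is to reduce the whole statement to one structural fact about generalized Cohen--Macaulay rings: such a ring is Cohen--Macaulay on its punctured spectrum. Indeed, since $\ell(\HH^i_\fm(R))<\infty$ for all $i<d$, the modules $\HH^i_\fm(R)$ are $\fm$-power torsion for $i<d$, and the standard characterization of the generalized Cohen--Macaulay property (see \cite{sch}) then gives that $R_\fp$ is Cohen--Macaulay for every prime $\fp\neq\fm$. In particular $\depth(R_\fp)=\dim R_\fp=\Ht(\fp)$ for all such $\fp$, which is all I will use.

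For the first assertion I would argue by contradiction. Let $\fp\in\Ass(\frac{\overline{R}}{R})^{\geq 2}$ and suppose $\fp\neq\fm$. By Proposition \ref{p}(i) we have $\depth(R_\fp)=1$, while $\Ht(\fp)\geq 2$ by the meaning of the superscript. But $R_\fp$ is Cohen--Macaulay by the fact just recalled, so $\depth(R_\fp)=\Ht(\fp)\geq 2$, contradicting $\depth(R_\fp)=1$. Hence every element of $\Ass(\frac{\overline{R}}{R})^{\geq 2}$ equals $\fm$, that is $\Ass(\frac{\overline{R}}{R})^{\geq 2}\subseteq\{\fm\}$. When $d=2$ this is automatic, since $\fm$ is then the only prime of height $\geq 2$; the punctured-spectrum argument is genuinely needed only for $d\geq 3$.

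For the second assertion, assume in addition that $R$ is reduced, so Fact \ref{mat} applies and $\Ass(\frac{\overline{R}}{R})^{\geq 2}=\{\fp\in\Spec(R):\depth(R_\fp)=1\}^{\geq 2}$. By the first part this set is contained in $\{\fm\}$, hence it equals $\{\fm\}$ exactly when $\fm$ belongs to it. Since $\Ht(\fm)=d\geq 2$, membership of $\fm$ is equivalent to $\depth(R_\fm)=\depth(R)=1$. This settles both directions: if $\depth(R)=1$ then $\fm\in\{\fp:\depth(R_\fp)=1\}^{\geq 2}=\Ass(\frac{\overline{R}}{R})^{\geq 2}$, forcing equality with $\{\fm\}$; conversely, if $\Ass(\frac{\overline{R}}{R})^{\geq 2}=\{\fm\}$ then $\fm\in\Ass(\frac{\overline{R}}{R})$, and Proposition \ref{p}(i) yields $\depth(R)=1$.

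The only delicate point is the structural fact invoked at the outset, and I expect it to be the main step to justify carefully: passing from finiteness of $\ell(\HH^i_\fm(R))$ for $i<d$ to Cohen--Macaulayness of $R_\fp$ at every non-maximal $\fp$. This is, however, part of the standard characterization of generalized Cohen--Macaulay rings (the finiteness condition is equivalent to $R_\fp$ being Cohen--Macaulay for all $\fp\neq\fm$ together with the equidimensionality $\dim R/\fp=d$), so it can simply be cited from \cite{sch}; I use only the Cohen--Macaulay conclusion, not the dimension formula. Everything beyond that is a short deduction from Proposition \ref{p} and Fact \ref{mat}.
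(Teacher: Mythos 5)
Your proof is correct and follows the paper's argument almost exactly for the containment $\Ass(\frac{\overline{R}}{R})^{\geq2}\subseteq\{\fm\}$: both reduce to the fact, cited from \cite{sch}, that a generalized Cohen--Macaulay ring is Cohen--Macaulay on the punctured spectrum, and then contradict $\depth(R_{\fp})=1$ from Proposition \ref{p}(i). For the equivalence in the reduced case you diverge mildly: the paper handles the direction ``$\depth(R)=1\Rightarrow\fm\in\Ass(\frac{\overline{R}}{R})^{\geq2}$'' by noting $R$ is then not $(S_2)$ and invoking Theorem \ref{cr2} to get nonemptiness, whereas you apply Fact \ref{mat} directly to place $\fm$ in the set; your route is slightly more direct and equally legitimate, since Fact \ref{mat} is established independently earlier in the paper. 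The reverse direction is identical in both (Proposition \ref{p}(i) forces $\depth(R)=1$ once $\fm$ is an associated prime), so there is no gap.
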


\begin{proof}The assumptions grantee that $R$ is   Cohen--Macaulay over the punctured spectrum (see \cite{sch}).
	Let $\fp\in	\Ass(\frac{\overline{R}}{R})^{\geq2}\setminus\{\fm\}$. By definition, $\Ht(\fp)\geq 2$. In view of Proposition \ref{p}, $\depth(R_{\fp})\leq1$. Since $R$  is  Cohen--
	Macaulay over the punctured spectrum, we know that $2\leq \Ht(\fp)=\depth(R_{\fp})\leq1$. This contradiction says that $\Ass(\frac{\overline{R}}{R})^{\geq2}\setminus\{\fm\}=\emptyset$.
	In other words, $\Ass(\frac{\overline{R}}{R})^{\geq2}\subset \{\fm\}$. 
	
Here, we assume that $R$ is reduced and we compute $\Ass(\frac{\overline{R}}{R})^{\geq2}$. 
We have two possibilities: i)   $\depth(R)\neq 1$, or ii) $\depth(R)= 1$.

i)  Suppose $\depth(R)\neq 1$. Then 	$\Ass(\frac{\overline{R}}{R})^{\geq2}=\emptyset.$
	Indeed, 
	we apply Proposition \ref{p}(i) to see that $\fm\notin\Ass(\frac{\overline{R}}{R})^{\geq2}.$ We combine
	this with the first paragraph to see $\Ass(\frac{\overline{R}}{R})^{\geq2}=\emptyset.$

ii)   Suppose $\depth(R)= 1$. Then $\Ass(\frac{\overline{R}}{R})^{\geq2}=\{\fm\}.$	Indeed, we know that  $R$ is not $(S_2)$. We apply Theorem \ref{cr2} to see
	 $\Ass(\frac{\overline{R}}{R})^{\geq2}\neq\emptyset.$  We incorporate this
		with $\Ass(\frac{\overline{R}}{R})^{\geq2}\subseteq \{\fm\}$ to conclude that $\Ass(\frac{\overline{R}}{R})^{\geq2}=\{\fm\}.$
\end{proof}

	An ideal $I$ of a local ring $A$ is called Cohen--Macaulay
	if $A/I$  is a Cohen--Macaulay ring.

\begin{observation}  \label{remint} 
Let $I$ and $J$ be Cohen--Macaulay  ideals of a local ring $(A,\fm)$
 such that $I+J$ is $\fm$-primary.  Set $R:=A/I\cap J$. The following holds:
		\begin{enumerate}
		\item[i)] If $\dim(A/I)=\dim(A/J)\geq 2$ and $J$, then $R$ is generalized Cohen--Macaulay.
		\item[ii)] If $2\leq \dim(A/I)<\dim(A/J)$ and $J$, then $R$ is not generalized Cohen--Macaulay.
	\end{enumerate}
In each cases, $\Ass(\frac{\overline{R}}{R})^{\geq2}=\{\fm\} $ provided $I$ and $J$ are radical. 
\end{observation}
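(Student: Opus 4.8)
The plan is to exploit the Mayer--Vietoris short exact sequence attached to the two quotients of $R$. Since $I\cap J$ is the kernel of the diagonal map, one has
$$0\lo R\lo A/I\oplus A/J\lo A/(I+J)\lo 0,$$
where the first map is induced by the canonical surjections and the second is their difference. Because $I+J$ is $\fm$-primary, $A/(I+J)$ has finite length, so $\HH^0_\fm(A/(I+J))=A/(I+J)$ and $\HH^i_\fm(A/(I+J))=0$ for $i\geq 1$. Applying $\HH^\bullet_\fm(-)$ and using that $A/I$ and $A/J$ are Cohen--Macaulay (hence have local cohomology concentrated in the single degree equal to their dimension, and depth $\geq 2$ since both dimensions are $\geq 2$), I would read off from the long exact sequence that $\HH^0_\fm(R)=0$, that $\HH^1_\fm(R)\cong A/(I+J)$ is of finite length, and that $\HH^i_\fm(R)\cong \HH^i_\fm(A/I)\oplus \HH^i_\fm(A/J)$ for every $i\geq 2$. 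Note also $\dim R=\max\{\dim A/I,\dim A/J\}$ since $\V(I\cap J)=\V(I)\cup\V(J)$.

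With these formulas the two dimension hypotheses are immediate to dispatch. For i), write $d:=\dim A/I=\dim A/J=\dim R$; for $2\leq i<d$ both $\HH^i_\fm(A/I)$ and $\HH^i_\fm(A/J)$ vanish, so every $\HH^i_\fm(R)$ with $i<\dim R$ has finite length (it is $0$ for $i=0$ and for $2\leq i<d$, and is $A/(I+J)$ for $i=1$); thus $R$ is generalized Cohen--Macaulay. For ii), put $e:=\dim A/I$ with $2\leq e<d:=\dim A/J=\dim R$; then $\HH^e_\fm(A/J)=0$, so $\HH^e_\fm(R)\cong \HH^e_\fm(A/I)$, the top local cohomology of the Cohen--Macaulay module $A/I$ of dimension $e\geq 2$. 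This is nonzero of infinite length, and since $e<\dim R$ the generalized Cohen--Macaulay property fails.

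For the final assertion, assume $I,J$ are radical. Then $I\cap J$ is radical and $R$ is reduced, so Fact \ref{mat} reduces the computation of $\Ass(\frac{\overline{R}}{R})^{\geq 2}$ to the locus $\{\fp:\depth(R_\fp)=1\}^{\geq 2}$. The key point is that $\V(I)\cap\V(J)=\V(I+J)=\{\fm\}$, so any prime $\fp\neq\fm$ of $R$ contains exactly one of $I,J$; localizing gives $(I\cap J)_\fp=I_\fp$ when $J\not\subseteq\fp$, whence $R_\fp\cong (A/I)_\fp$ (or $(A/J)_\fp$), which is Cohen--Macaulay, and therefore $\depth R_\fp=\Ht\fp$. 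Consequently no prime of height $\geq 2$ other than $\fm$ lies in the depth-one locus. Finally, from $\HH^0_\fm(R)=0$ and $\HH^1_\fm(R)\cong A/(I+J)\neq 0$ we get $\depth R=1$, while $\Ht\fm=\dim R=d\geq 2$; hence $\fm$ does lie in that locus, and in both cases $\Ass(\frac{\overline{R}}{R})^{\geq 2}=\{\fm\}$. In particular no completeness of $A$ is needed here.

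The main obstacle is the infinite-length claim for $\HH^e_\fm(A/I)$ in part ii): one must argue that the top local cohomology of a Cohen--Macaulay module of dimension $\geq 1$ is never of finite length. I would justify this through the attached-prime description $\operatorname{Att}(\HH^e_\fm(A/I))=\{\fp\in\Ass(A/I):\dim A/\fp=e\}$, which contains a minimal prime of $I$ with $\dim A/\fp=e\geq 1$, hence a prime distinct from $\fm$; since a finite-length module has only $\fm$ as an attached prime, infinite length follows. A secondary, purely routine, technical point is to verify cleanly that $(I\cap J)_\fp=I_\fp$ when $J\not\subseteq\fp$, which underlies the localization step in the last paragraph.
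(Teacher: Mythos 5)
Your proof is correct, and its core is the same as the paper's: both start from the Mayer--Vietoris sequence $0\to R\to A/I\oplus A/J\to A/(I+J)\to 0$ and read off $\HH^0_{\fm}(R)=0$, $\HH^1_{\fm}(R)\cong A/(I+J)$ and the identification of the higher local cohomologies, which settles i) at once. The differences are in the two remaining steps. For ii) the paper simply observes that $R$ is not equidimensional, hence not generalized Cohen--Macaulay, whereas you compute $\HH^{e}_{\fm}(R)\cong\HH^{e}_{\fm}(A/I)$ for $e=\dim A/I<\dim R$ and rule out finite length via attached primes; both are valid, the paper's being shorter and yours more explicit. For the final assertion the paper splits into cases, invoking Proposition \ref{ge} in case i) and Theorem \ref{cr2} in case ii), after checking (as you also do) that $R$ is Cohen--Macaulay on the punctured spectrum; you instead apply Fact \ref{mat} uniformly to the depth-one locus. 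Your route is slightly more self-contained and, as you note, avoids the completeness hypothesis that Proposition \ref{ge} formally carries, so it buys a cleaner justification of the last sentence of the statement; otherwise the two arguments are interchangeable.
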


\begin{proof}i) We have $\min(I\cap J)\subset\Min(I )\cup\Min(J)$.
	Since Cohen--Macaulay rings are equidimensional, we have $d:=\dim(R)=\dim(A/I)=\dim(A/J)$.
	Use Grothendieck vanishing theorem along with the long exact sequence of local cohomology modules induced from the exact sequence $0\to \frac{A}{I\cap J}\to \frac{A}{I}\oplus\frac{A}{J}\to\frac{A}{I+J}\to 0$ to deduce that
	$\HH^1_{\fm}(R)\cong \frac{A}{I+J}$ and that $\HH^i_{\fm}(R)=0$ for all $i\neq 1,d$. Thus,
$R$ is generalized Cohen--Macaulay and $\depth(R)=1$. 
Suppose now that $I$ and $J$ are radical. Then $R$ is reduced. From this, and in view of
Proposition \ref{ge} we conclude that $\Ass(\frac{\overline{R}}{R})^{\geq2}= \{\fm\}$.

ii) It is easy to see that $R$ is not generalized Cohen--Macaulay, because it
is not equidimensional. Similar  to i) 	$\HH^1_{\fm}(R)\neq 0$ and so $\depth(R)=1$.
Let $\fp:=\frac{P}{I\cap J}$ be any prime ideal of $R$ different from $\fm$.
Then $P\supseteq I$ or $P\supseteq J$. If $P$ contain both of them, as $\rad{I+J}=\fm$, we see $P=\fm$
which is excluded. By symmetry, we assume that $P\nsupseteq I$.
 Recall that
 localization commutes intersections and that $IA_p= A_P$. So, $R_{\fp}=A_{P}/JA_{P}$ which is Cohen--Macaulay. 
 In sum, $R$ is Cohen--Macaulay over the punctured spectrum. In particular,
 there is no prime ideal $\fp\in\Spec(R)\setminus\{\fm\}$ of height bigger than one such that
$\Ass(R_{\fp})=1$, i.e., $\Ass(\frac{\overline{R}}{R})^{\geq2}\subset\{\fm\} $. 
Suppose now that $I$ and $J$ are radical. Then, $R$ is reduced and is not $(S_2)$.
 In view of Theorem \ref{cr2} $\emptyset\neq\Ass(\frac{\overline{R}}{R})^{\geq2}\subset\{\fm\} $. So,
$\Ass(\frac{\overline{R}}{R})^{\geq2}=\{\fm\} $.
\end{proof}

This cannot be extended to the higher intersections  (also, shows the importance of $I+J$ is $\fm$-primary):

\begin{example}\label{inters}
Let	$R:=\frac{\mathbb{Q}[[x_1,\ldots,x_6]]}{(x_1,x_2)\cap(x_3,x_4)\cap(x_5,x_6)}$.
	The following holds: \begin{enumerate}
		\item[i)] $\{(x_1,x_2,x_3,x_4),(x_1,x_2,x_5,x_6),(x_3,x_4,x_5,x_6)\}\subset\Ass(\frac{\overline{R}}{R})^{\geq2} $ and
		\item[ii)] $\fm\notin\Ass(\frac{\overline{R}}{R})^{\geq2}$.\end{enumerate}
\end{example}

\begin{proof}
Let $\fq:=(x_1,x_2,x_3,x_4)$ and $A:=\mathbb{Q}[[ x_1,\ldots,x_6]]$. Recall that
 localization commutes intersections and that $(x_5,x_6)A_{\fq}=A_{\fq}$. 
 From these, $R_{\fq} =\frac{A_{\fq}}{(x_1,x_2) A_{\fq}\cap(x_3,x_4) A_{\fq}}$ which is a 2-dimensional ring and of depth one (apply Remark \ref{remint} for $A_{\fq}$).
 In view of Fact \ref{mat}, $\fq\in \Ass(\frac{\overline{R}}{R})^{\geq2}$.
 Similarly, $\{(x_1,x_2,x_5,x_6),(x_3,x_4,x_5,x_6)\}\subset\Ass(\frac{\overline{R}}{R})^{\geq2}$.
 The projective dimension of $R$ as an $A$-module is four. By Auslander--Buchsbaum formula, $\depth(R)=2$.  In view of  Proposition \ref{p}(i), $\fm\notin \Ass(\frac{\overline{R}}{R})^{\geq2}$.
\end{proof}

\begin{example}  \label{pr} 
	Let $I$ and $J$ be Cohen--Macaulay  ideals of a local ring $(A,\fm)$
	such that $I+J$ is $\fm$-primary.  Set $R:=A/I  J$. Then, $\Ass(\frac{\overline{R}}{R})^{\geq2}\subset\{\fm\} $. Indeed, it is easy to see that $R$ is Cohen--Macaulay  over the punctured spectrum. From this, $\Ass(\frac{\overline{R}}{R})^{\geq2}\subset\{\fm\} $.
\end{example}

	\section{weak versions of normality  and being $(S_2)$}

An integral extension of rings $i : R \hookrightarrow S$ is said to be
subintegral (resp. weakly subintegral) it induces a bijection on the prime spectrum, and 
the induced maps on the residue fields, $k(i^{-1}(\fp))\to k(\fp)$ is an isomorphism (resp. a purely inseparable field extension). 

\begin{definition}
	Let $^+R$ (resp. $^{\ast}R$)
  be the  unique largest subextension of $R$ in $\overline{R}$ such that  $R\subset (^+R)$ (resp. $R\subset (^\ast R)$) is subintegral (resp. weakly subintegral). A ring
	$R$ is said to be seminormal (resp. weakly normal) if $R=(^+R)$ (resp. $R=(^{\ast}R)$).
\end{definition}

\begin{fact}\label{f2}
	Let $(R, \fm)$ be a reduced local ring of characteristic $p$ equipped with  the Frobenius map $F$.
\begin{enumerate} 
\item[i)](See  \cite{du}) Assume $R$ is
 weakly normal on its punctured spectrum. Then  $R$ is weakly normal if and only if the action of $F$ on $\HH^1_{\fm}(R)$  is injective.
\item[ii)] (See \cite[Theorem 3.1]{sw})	
Assume $(R, \fm)$  is a  strictly Henselian local domain, homomorphic image of a Gorenstein domain and
	$\dim R \geq 2$. Then $\HH^1_{\fm}(R)$ is $F$-torsion, i.e.,
	each of its elements annihilated after the action of some Frobenius powers.
 \end{enumerate}
\end{fact}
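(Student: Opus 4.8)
Since the statement packages two cited results, I would prove each part separately, in both cases realizing $\HH^1_{\fm}(R)$ as sections over the punctured spectrum and tracking the Frobenius action.

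\emph{Part i).} The plan is to realize the weak normalization ${}^{\ast}R$ inside $\HH^1_{\fm}(R)$ and to identify there the Frobenius-nilpotent classes with ${}^{\ast}R/R$. Put $U=\Spec(R)\setminus\{\fm\}$. Since $R$ is reduced of positive dimension, $\fm$ contains a nonzerodivisor, whence $\HH^0_{\fm}(R)=\HH^0_{\fm}({}^{\ast}R)=0$, and weak normality on $U$ makes ${}^{\ast}R/R$ supported only at $\fm$. Feeding $0\to R\to{}^{\ast}R\to{}^{\ast}R/R\to 0$ into the local cohomology sequence then yields a Frobenius-equivariant embedding ${}^{\ast}R/R\hookrightarrow\HH^1_{\fm}(R)$, while the depth one identification gives $\HH^1_{\fm}(R)\cong\Gamma(U,\mathcal{O}_U)/R$ with $\Gamma(U,\mathcal{O}_U)\subseteq Q(R)$. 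A class of $\HH^1_{\fm}(R)$ is $F$-nilpotent precisely when it is represented by some $s\in\Gamma(U,\mathcal{O}_U)$ with $s^{p^e}\in R$; such an $s$ is a root of $T^{p^e}-s^{p^e}$, hence integral, so $s\in\overline{R}$, and the relation $s^{p^e}\in R$ is exactly the characteristic $p$ description of weak subintegrality, so $s\in{}^{\ast}R$; conversely each element of ${}^{\ast}R$ has a $p$-power in $R$. Thus the $F$-nilpotent submodule of $\HH^1_{\fm}(R)$ equals ${}^{\ast}R/R$, and since injectivity of $F$ is equivalent to the vanishing of this submodule, $F$ is injective on $\HH^1_{\fm}(R)$ iff ${}^{\ast}R=R$, i.e. iff $R$ is weakly normal.

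\emph{Part ii).} Here the strategy is cohomological and Frobenius-theoretic. Again $\HH^1_{\fm}(R)\cong\Gamma(U,\mathcal{O}_U)/R$, and we may assume $\depth R=1$, since otherwise $\HH^1_{\fm}(R)=0$ and there is nothing to prove. Because $R$ is a homomorphic image of a Gorenstein domain it admits a dualizing complex, so $\HH^1_{\fm}(R)$ is a cofinite module carrying a Frobenius action to which the Hartshorne--Speiser--Lyubeznik theorem applies: there is a largest $F$-nilpotent submodule, and on the quotient---the $F$-stable part---$F$ acts injectively. Hence $\HH^1_{\fm}(R)$ is $F$-torsion iff its $F$-stable part vanishes. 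I would then show this part is zero by an Artin--Schreier/\'etale argument: a nonzero $F$-stable class produces, via $0\to\mathbb{Z}/p\to\mathcal{O}\xrightarrow{F-1}\mathcal{O}\to 0$, a nontrivial class in $\HH^1_{\text{\'et}}(U,\mathbb{Z}/p)$, i.e. a nontrivial connected $\mathbb{Z}/p$-cover of $U$. Since $\dim R\geq 2$ the complement of $U$ has codimension at least two, so by purity such a cover extends to $\Spec R$; but $R$ is strictly Henselian, hence $\Spec R$ has no nontrivial connected finite \'etale cover, and the domain hypothesis excludes disconnected ones. So the $F$-stable part vanishes and $\HH^1_{\fm}(R)$ is $F$-torsion.

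\emph{Main obstacle.} For part i) the delicate point is the two-sided identification of the $F$-nilpotent classes with ${}^{\ast}R/R$, in particular verifying that an $F$-nilpotent section is genuinely weakly subintegral and agrees with $R$ off $\fm$; this is exactly where weak normality on the punctured spectrum is used. For part ii) the real work is justifying the Artin--Schreier/\'etale comparison and the purity step transporting a cover of $U$ to $\Spec R$, and it is precisely here that the Gorenstein-image hypothesis (for duality and cofiniteness) together with strict Henselianity and $\dim R\geq 2$ are indispensable. These technical cores are the content of the cited theorems \cite{du} and \cite[Theorem 3.1]{sw}.
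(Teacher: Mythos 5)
The paper offers no proof of this Fact at all: both parts are imported verbatim from the cited sources (Schwede for i), Singh--Walther for ii)), so there is no internal argument to compare yours against; I can only judge your reconstruction on its own terms. Your sketch of part i) is essentially the standard argument and is sound in outline: the identification $\HH^1_{\fm}(R)\cong \HH^0(\mathcal{X}^\circ,\mathcal{O})/R$, the Frobenius-equivariant embedding ${}^{\ast}R/R\hookrightarrow \HH^1_{\fm}(R)$ (this is exactly where weak normality on the punctured spectrum enters, via $({}^{\ast}R)_{\fp}={}^{\ast}(R_{\fp})=R_{\fp}$, which forces ${}^{\ast}R\subseteq\HH^0(\mathcal{X}^\circ,\mathcal{O})$), and the matching of the $F$-nilpotent classes with ${}^{\ast}R/R$. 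The one step you should flag as a substantive external input rather than an observation is the inclusion ${}^{\ast}R\subseteq\{a\in\overline{R}:a^{p^e}\in R\text{ for some }e\}$: that every weakly subintegral element admits a $p$-power landing in $R$ is a structure theorem for weakly subintegral extensions in characteristic $p$ (Itoh, Yanagihara), not a formal consequence of the definition, and without it the implication ``$F$ injective $\Rightarrow$ weakly normal'' does not close.

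Part ii) contains a genuine gap: the purity step. Zariski--Nagata purity (a finite cover \'etale outside a closed set of codimension at least two is \'etale everywhere) requires $\Spec R$ to be regular, or at least a complete intersection in high dimension; for an arbitrary strictly Henselian domain it fails---non-regular singularities routinely carry nontrivial connected \'etale covers of the punctured spectrum that do not extend across the closed point---so you cannot conclude that your Artin--Schreier cover of $\mathcal{X}^\circ$ is trivial by extending it to $\Spec R$. The step is also avoidable. To kill the $F$-fixed classes argue directly: if $s\in\HH^0(\mathcal{X}^\circ,\mathcal{O})\subseteq Q(R)$ represents a class with $F(\bar{s})=\bar{s}$, then $s^p-s\in R$; since $R$ is Henselian with separably closed residue field, $F-1\colon R\to R$ is surjective, so after subtracting an element of $R$ one may assume $s^p=s$, whence $s\in\mathbb{F}_p\subseteq R$ because $Q(R)$ is a field; thus $\ker(F-1)=0$ on $\HH^1_{\fm}(R)$. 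Combining this with the Hartshorne--Speiser--Lyubeznik theorem (available precisely because $R$ is the image of a Gorenstein ring), the stable part $\bigcap_e F^e\bigl(\HH^1_{\fm}(R)\bigr)$ is a finite-dimensional $k$-space on which $F$ acts bijectively and semilinearly, hence is spanned by its $F$-fixed vectors as $k$ is separably closed; since those vanish, the stable part vanishes and $\HH^1_{\fm}(R)$ is $F$-torsion. This is the shape of Singh--Walther's actual proof; your \'etale reformulation is in the right spirit, but the descent from $\mathcal{X}^\circ$ to $\Spec R$ cannot be carried out by purity.
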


\begin{lemma}\label{first}
	Let $(R, \fm,k)$  be a  complete and weakly normal domain of characteristic $p$ where $k$ is  separably closed. The following assertions are hold:
\begin{enumerate} 
	\item[i)]	One has $\depth(R)\geq\min\{2,\dim R\}$. In particular,
	the punctured spectrum is connected.
		\item[ii)] Assume in addition	that $\dim(R)\geq 2$, let $\mathcal{X}:=\Spec(R)$ and $\mathcal{X}^\circ:=\mathcal{X}\setminus\{ \fm \}$ be the punctured spectrum.
	Then  the restriction map $f:\HH^0(\mathcal{X},\mathcal{O}_{\mathcal{X}})\to \HH^0(\mathcal{X}^\circ,\mathcal{O}_{\mathcal{X}^\circ})$ is an isomorphism.
\end{enumerate}
\end{lemma}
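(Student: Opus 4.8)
The plan is to establish (i) by trapping the Frobenius action on $\HH^1_{\fm}(R)$ between the two facts recorded in Fact \ref{f2}, and then to read off (ii) and connectedness from the local cohomology sequence on the punctured spectrum. First I would dispose of the low-dimensional cases of (i): if $\dim R=0$ then $R$ is a field and $\depth R=0=\min\{2,\dim R\}$, while if $\dim R=1$ then $\fm$ contains a nonzerodivisor (as $R$ is a domain) and $\depth R\geq 1=\min\{2,\dim R\}$. So I may assume $\dim R\geq 2$, where the claim becomes $\depth R\geq 2$. Since $R$ is a domain, $\HH^0_{\fm}(R)=0$, so this is equivalent to showing $\HH^1_{\fm}(R)=0$.

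The heart of the argument is to force $\HH^1_{\fm}(R)=0$ by a clash of injectivity and torsion. On one side, weak normality passes to localizations, so $R$ is weakly normal on its punctured spectrum; applying the forward implication of Fact \ref{f2}(i) to the globally weakly normal ring $R$ shows that $F$ acts \emph{injectively} on $\HH^1_{\fm}(R)$. On the other side, $R$ is complete with separably closed residue field, hence strictly Henselian, and being a complete equicharacteristic-$p$ local domain it is, by Cohen's structure theorem, a homomorphic image of a formal power series ring over $k$, that is, of a regular (so Gorenstein) domain. Together with $\dim R\geq 2$ these are exactly the hypotheses of Fact \ref{f2}(ii), so $\HH^1_{\fm}(R)$ is $F$-torsion. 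An $F$-torsion module on which $F$ is injective vanishes: if $F^{n}(\xi)=0$, injectivity gives $\xi=0$. Hence $\HH^1_{\fm}(R)=0$ and $\depth R\geq 2$, proving (i).

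For (ii) and connectedness I would invoke the excision sequence linking local cohomology to sheaf cohomology on $\mathcal{X}^\circ$. As $\mathcal{X}=\Spec(R)$ is affine, it reads
$$0\to \HH^0_{\fm}(R)\to R\to \HH^0(\mathcal{X}^\circ,\mathcal{O}_{\mathcal{X}^\circ})\to \HH^1_{\fm}(R)\to 0.$$
When $\dim R\geq 2$ we have shown $\HH^0_{\fm}(R)=\HH^1_{\fm}(R)=0$, so the restriction map $f$ is an isomorphism, giving (ii). In particular $\HH^0(\mathcal{X}^\circ,\mathcal{O}_{\mathcal{X}^\circ})\cong R$ is a domain, hence has no nontrivial idempotents; since a nonempty Noetherian scheme is connected precisely when its ring of global sections has no nontrivial idempotents, $\mathcal{X}^\circ$ is connected. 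The remaining case $\dim R=1$ gives a punctured spectrum consisting of the single point $(0)$, which is trivially connected.

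The main obstacle is not the cohomological bookkeeping but the careful verification that the two facts may be applied at once: that weak normality descends to every non-maximal localization, so that Du's criterion is genuinely available on the punctured spectrum; and that $R$ really meets the conditions of Fact \ref{f2}(ii), namely strict Henselianity (from completeness together with the separably closed residue field) and realizability as a homomorphic image of a Gorenstein domain (via Cohen's theorem in the equicharacteristic-$p$ setting). Once these hypotheses are secured, the injectivity-versus-torsion contradiction does all the remaining work.
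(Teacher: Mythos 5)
Your proof is correct and follows essentially the same route as the paper: reduce to $\dim R\geq 2$, kill $\HH^1_{\fm}(R)$ by playing the injectivity of Frobenius from Fact \ref{f2}(i) against the $F$-torsion property from Fact \ref{f2}(ii), and read off (ii) from the local cohomology exact sequence. The only (harmless) divergence is that you deduce connectedness of the punctured spectrum directly from (ii) via the absence of nontrivial idempotents in $\HH^0(\mathcal{X}^\circ,\mathcal{O}_{\mathcal{X}^\circ})\cong R$, whereas the paper cites Hartshorne's result that $\depth(R)\geq 2$ implies connectedness; your verification of the hypotheses of Fact \ref{f2}(ii) (strict Henselianity and the Gorenstein presentation via Cohen's structure theorem) is a welcome filling-in of details the paper leaves implicit.
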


\begin{proof}
i)	We may assume that $d:=\dim R\geq 2$, and we  show that $\HH^1_{\fm}(R)=0$. This follows by showing that the action of Frobenious
	on $\HH^1_{\fm}(R)$ is  both injective and torsion. These are done by Fact  \ref{f2}.
	In order to apply Fact  \ref{f2}(i), we recall that  a ring is weakly normal if and only if all of its localizations at prime
	ideals are weakly normal.
In order to prove the particular case, first assume that  $d= 0$ (resp. $d= 1$). Then, the punctured spectrum is empty (resp. singleton), and so
		connected. Then we may assume that $d>1$. We proved that $\depth(R)\geq\min\{2,d\}\geq 2$. Now, the claim follows from \cite[Proposition 2.1]{h}.

		ii) 	There is an exact sequence
		$0\to\HH^0_{\fm}(R)\to\HH^0(\mathcal{X},\mathcal{O}_{\mathcal{X}})\stackrel{f}\lo \HH^0(\mathcal{X}^\circ,\mathcal{O}_{\mathcal{X}^\circ})\to\HH^1_{\fm}(R)\to0.$	
		By the first item, $ \HH^0_{\fm}(R)=\HH^1_{\fm}(R)=0$. So,  $f$ is an isomorphism.
\end{proof}

\begin{proposition}\label{cge}
	Let 	$(R, \fm,k)$  be a  complete local domain of dimension $d\geq 2$ of prime characteristic where $k$ is  separably closed. The following assertions are hold:
	\begin{enumerate} 
		\item[i)]If  $R$ is generalized Cohen--Macaulay and weakly normal, then    $\Ass(\frac{\overline{R}}{R})^{\geq2}=\emptyset$. In particular, 	$R$ is   $(S_2)$.
			\item[ii)] Let $I$ be an ideal such that $\dim V (I)\leq \dim R-2$, 
			then $R=\Gamma (\Spec R\setminus\V (I), R)$.\end{enumerate}
\end{proposition}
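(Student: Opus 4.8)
\emph{Part (i).} The plan is to pit the depth estimate for weakly normal rings against the computation of $\Ass(\frac{\overline{R}}{R})^{\geq 2}$ in the generalized Cohen--Macaulay case. Since $R$ is a domain it is reduced, and it is generalized Cohen--Macaulay of dimension $d\geq 2$, so Proposition \ref{ge} gives $\Ass(\frac{\overline{R}}{R})^{\geq 2}\subseteq\{\fm\}$ together with the dichotomy that, $R$ being reduced, equality holds if and only if $\depth(R)=1$. Thus it suffices to exclude $\depth(R)=1$, and this is exactly what weak normality provides: since $R$ is a complete weakly normal domain of prime characteristic with separably closed residue field, Lemma \ref{first}(i) yields $\depth(R)\geq\min\{2,d\}=2$. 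Hence $\Ass(\frac{\overline{R}}{R})^{\geq 2}=\emptyset$, and the final clause follows from the equivalence of conditions (i) and (iii) in Theorem \ref{cr2} for reduced rings of dimension at least two.

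\emph{Part (ii).} I would first convert the statement into a grade bound. The local-cohomology sequence attached to the open immersion $\Spec R\setminus\V(I)\hookrightarrow\Spec R$ is
$$0\lo\HH^0_I(R)\lo R\lo\Gamma(\Spec R\setminus\V(I),R)\lo\HH^1_I(R)\lo 0,$$
so the asserted equality is equivalent to $\HH^0_I(R)=\HH^1_I(R)=0$, that is, to $\gr(I,R)\geq 2$. Since $\dim\V(I)\leq d-2<d$ the ideal $I$ is nonzero, and because a complete local domain is equidimensional and catenary one also gets $\Ht(I)=d-\dim\V(I)\geq 2$.

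To obtain $\gr(I,R)\geq 2$ I would use the pointwise description $\gr(I,R)=\inf\{\depth(R_{\fp}):\fp\in\V(I)\}$, reducing the problem to checking $\depth(R_{\fp})\geq 2$ for every $\fp\supseteq I$. Retaining the hypotheses of (i), if $\fp\neq\fm$ then generalized Cohen--Macaulayness makes $R$ Cohen--Macaulay on its punctured spectrum (as in the proof of Proposition \ref{ge}), so $R_{\fp}$ is Cohen--Macaulay and $\depth(R_{\fp})=\Ht(\fp)\geq\Ht(I)\geq 2$; if $\fp=\fm$ then $\depth(R_{\fp})=\depth(R)\geq 2$ by Lemma \ref{first}(i). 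In either case $\depth(R_{\fp})\geq 2$, whence $\gr(I,R)\geq 2$ and the sequence above collapses to $R\cong\Gamma(\Spec R\setminus\V(I),R)$.

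I expect the vanishing $\HH^1_I(R)=0$ to be the crux. The depth bound from weak normality only sees cohomology supported at the closed point and thus settles just the case $\V(I)=\{\fm\}$ already contained in Lemma \ref{first}(ii); the general statement works because the two hypotheses cover complementary parts of $\V(I)$, generalized Cohen--Macaulayness handling every prime $\fp\neq\fm$ and weak normality supplying the one value at $\fm$. If one prefers to avoid the pointwise grade formula, the bound can be produced by hand: choose a nonzero $x_1\in I$ (regular, as $R$ is a domain); since $R$ is $(S_2)$ by part (i), the ring $R/x_1R$ is $(S_1)$ and so has no embedded primes, forcing every $\fp\in\Ass(R/x_1R)$ to have height one, and then $\Ht(I)\geq 2$ lets prime avoidance produce $x_2\in I$ regular on $R/x_1R$, so that $x_1,x_2$ is a regular sequence realizing $\gr(I,R)\geq 2$.
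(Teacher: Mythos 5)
Your proof of part (i) is correct and follows essentially the same route as the paper: the depth bound $\depth(R)\geq 2$ from Lemma \ref{first}(i) combined with the generalized Cohen--Macaulay localization (Proposition \ref{ge} / Proposition \ref{p}) to empty out $\Ass(\frac{\overline{R}}{R})^{\geq 2}$, and then Theorem \ref{cr2} for the $(S_2)$ conclusion. For part (ii) the paper offers no argument beyond ``this follows from i) by a well-known trick,'' and your write-up correctly supplies that trick: the four-term local cohomology sequence reduces the claim to $\gr(I,R)\geq 2$, which follows from $\Ht(I)\geq 2$ together with $(S_2)$ (equivalently, from the pointwise depth bounds at primes of $\V(I)$), so your version is a faithful and complete filling-in of what the paper leaves implicit.
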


\begin{proof}
i)	In view of  Lemma \ref{first} we see $\depth(R)\geq\min\{2,\dim R\}\geq 2$. 
 By Proposition  \ref{ge},  $\Ass(\frac{\overline{R}}{R})^{\geq2}\subset \{\fm\}$. Due to  Proposition 
	\ref{p}  we know  $\Ass(\frac{\overline{R}}{R})^{\geq2}=\emptyset$. We incorporate this along with Theorem \ref{cr2} to observe that   $R$ is   $(S_2)$.
	
	ii) This follows from i) by applying a well-known trick.
\end{proof}

 \begin{corollary}
 	Let 	$(R, \fm,k)$  be   complete, weakly normal, generalized   Cohen--Macaulay, of  prime characteristic $p$  of dimension $d>1$ and $k=\overline{k}$. Suppose there is an open subset 
 	$U\subset\Spec(R)$ such that for any $x\in U$ of $\codim\leq 1$, $\mathcal{O}_x$ is quasi-Gorenstein. 
 	The following assertions are true:
 	\begin{enumerate}
 		\item[i)] For any $\fp$ of height $1$, the ring $R_{\fp}$ is Gorenstein. In particular, $R$ is quasi-normal.
 		\item[ii)] Suppose in addition  $p\neq 2$. Then, after possible shrinking of $U$,  we have $\widehat{\mathcal{O}_x}\cong\frac{k[[X_1,\ldots,X_n]]}{(X_1X_2)}$ for any closed and singular point $x\in U$.
 	\end{enumerate}
 \end{corollary}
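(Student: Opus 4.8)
\emph{Part (i).} The plan is to obtain quasi-normality as the conjunction of $(S_2)$ and $(G_1)$. First I would apply Proposition \ref{cge}(i): as $R$ is a complete local domain of dimension $d\geq 2$ of prime characteristic with $k=\overline{k}$ (hence separably closed), generalized Cohen--Macaulay and weakly normal, it follows that $\Ass(\frac{\overline{R}}{R})^{\geq 2}=\emptyset$ and that $R$ is $(S_2)$. Since the complement of $U$ has codimension at least two, every prime of height at most one lies in $U$. Fixing $\fp$ of height one, the $(S_2)$ condition gives $\depth(R_{\fp})\geq\min\{2,\Ht(\fp)\}=1=\dim(R_{\fp})$, so $R_{\fp}$ is a one-dimensional Cohen--Macaulay local ring; by hypothesis $R_{\fp}=\mathcal{O}_{\fp}$ is quasi-Gorenstein, and a Cohen--Macaulay quasi-Gorenstein local ring is Gorenstein. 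Together with the height-zero prime $(0)$, whose localization is the field $Q(R)$, this yields $(G_1)$, and $(S_2)$ with $(G_1)$ is precisely quasi-normality (see \cite{wol}).

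\emph{Part (ii), reduction to a curve.} For the second item the plan is to identify the singularity generically along the singular locus and then to spread the answer out. Since a complete local ring is excellent, $R$ is reduced with closed singular locus; being $(S_2)$ and a domain it is $(R_0)$, so by Serre's criterion its failure of normality (the interesting case) means a failure of $(R_1)$, i.e. $\Sing(R)$ meets codimension one. Localizing a weakly normal ring is again weakly normal, and for a height-one $\fp\in\Sing(R)$ the ring $R_{\fp}$ is, by part (i), one-dimensional Gorenstein and weakly normal. Passing to a $k$-rational closed singular point $x\in U$ on this stratum, Cohen's structure theorem writes $\widehat{\mathcal{O}_x}=k[[X_1,\ldots,X_n]]/I$ with residue field $k=\overline{k}$, and a general transverse slice is a complete Gorenstein weakly normal curve singularity over $\overline{k}$.

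\emph{Part (ii), the node and the role of $p\neq 2$.} The structural input I would invoke is the classification of Gorenstein weakly normal curve singularities: the $\delta$-invariant and the Cohen--Macaulay type both equal $r-1$, where $r$ is the number of analytic branches, so type one forces $r=2$, and weak normality forces the two branches to be smooth with independent tangent directions. Hence the slice is an ordinary double point, and its defining equation has a rank-two quadratic leading form. This is exactly where $\Char k\neq 2$ is used: over $\overline{k}$ a rank-two quadratic form splits as a product of two distinct linear forms, so a coordinate change turns the leading term into $X_1X_2$, and weak normality rigidifies the equation so that no higher-order terms survive, giving $\widehat{\mathcal{O}_x}\cong k[[X_1,\ldots,X_n]]/(X_1X_2)$. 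In characteristic two the quadric need not split in this way, which is the obstruction the hypothesis removes.

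\emph{Spreading out and the main obstacle.} Finally I would shrink $U$ to the open locus on which $\widehat{\mathcal{O}_x}$ is analytically a node times a regular factor; this locus contains the generic point of the codimension-one part of $\Sing(R)$, and after intersecting $U$ with it the displayed isomorphism holds at every closed singular $x\in U$. I expect the main obstacle to be twofold. First, one must check that forming $\widehat{\mathcal{O}_x}$ is compatible with weak normality (valid here by excellence), so that the curve classification genuinely transfers to the analytic slice. Second, and more seriously, one must prove the equisingular spreading-out, namely that the transverse double-point structure propagates from the generic point of the codimension-one singular stratum to the nearby closed points of $U$; this is where the product decomposition $(\text{node})\times(\text{regular})$ has to be established rather than merely asserted.
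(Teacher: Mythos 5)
There is a genuine gap in part (i). The corollary's hypothesis is only that \emph{some} open $U\subset\Spec(R)$ is quasi-Gorenstein at its points of codimension $\leq 1$; nothing is assumed about the codimension of $\Spec(R)\setminus U$. Your step ``since the complement of $U$ has codimension at least two, every prime of height at most one lies in $U$'' therefore uses a hypothesis that is not there: $U$ is dense (as $R$ is a domain), but its complement may well contain height-one primes, and for those your argument says nothing. This propagation from the dense open $U$ to \emph{all} height-one primes is exactly the nontrivial content of the statement, and it is where the paper invokes seminormality: after observing (as you do) that Proposition \ref{cge} gives $(S_2)$ and that quasi-Gorenstein plus Cohen--Macaulay equals Gorenstein on $U$, the paper notes that weakly normal rings are seminormal and then cites \cite[Proposition 9.3]{gt}, which upgrades ``Gorenstein in codimension one on a dense open'' to ``Gorenstein in codimension one'' for seminormal $(S_2)$ rings. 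You need either that input or an argument replacing it.

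For part (ii) the paper simply quotes \cite[Theorem 9.10]{gt} (again via seminormality and the $(S_2)$ property established in (i)). Your sketch --- classify Gorenstein weakly normal curve singularities as nodes, use $p\neq 2$ to split the rank-two quadric, then spread out along the codimension-one singular stratum --- is a plausible outline of how such a theorem is proved, but as you yourself concede, the decisive steps (the product decomposition $(\text{node})\times(\text{regular})$ at nearby closed points, i.e.\ the equisingular spreading-out, and the rigidification killing higher-order terms) are asserted rather than proved. As written, part (ii) is a plan, not a proof; to complete it along the paper's lines you should reduce to the seminormal setting and invoke the Greco--Traverso classification directly.
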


\begin{proof} In the light of Proposition \ref{cge} we see that  $R$ is   $(S_2)$. 
	By definition any quasi-Gorenstein and Cohen--Macaulay  ring is  Gorenstein.
Thus, for any $x\in U$ of $\codim\leq 1$ the ring  $\mathcal{O}_x$ is Gorenstein. 	
	Also, weakly normal rings are seminormal. Therefore,  i)  is in \cite[Proposition 9.3]{gt}, and ii)   is in \cite[Theorem 9.10]{gt}.
\end{proof}

\begin{definition}
Suppose $ x$ is any non-zerodivisor on $R$ and let  $y, z \in R$ be any pair such that  $\{y/x, z/x \}\subset \overline{R}$.
Recall from \cite{arf} that   $R$ is called \it{weakly Arf} if $yz/x \in R$. \end{definition}

\begin{proposition}\label{wa}
	Let  $R$  be   
	$(S_2)$ and of prime characteristic $p$. Adopt one of the following assumptions:
	\begin{enumerate}
		\item[i)]  $R$ is $F$-pure, or
		\item[ii)]  $R$ is weakly normal and Mori.
	\end{enumerate} Then $R$ is  weakly Arf.
\end{proposition}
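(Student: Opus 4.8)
The plan is to turn the weakly Arf condition into a codimension-one statement and then settle it over one-dimensional rings. Fix a non-zerodivisor $x$ and elements $y,z\in R$ with $\{y/x,z/x\}\subset\overline{R}$, and set $w:=yz/x=(y/x)\,z$. Since $y/x\in\overline{R}$ and $z\in R\subset\overline{R}$, the element $w$ automatically lies in $\overline{R}$, so the only issue is whether $w\in R$, i.e.\ whether the class of $w$ vanishes in $\overline{R}/R$. In both cases $R$ is reduced ($F$-pure rings are reduced, and weakly normal rings are reduced by definition). By Proposition \ref{p}(i) every element of $\Ass(\frac{\overline{R}}{R})$ satisfies $\depth(R_{\fp})=1$, hence has height $\geq 1$, while Proposition \ref{pcr2} (using $(S_2)$) gives $\Ass(\frac{\overline{R}}{R})^{\geq 2}=\emptyset$; combining these, every associated prime of $\overline{R}/R$ has height exactly one. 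Invoking the standard fact that an element of a module over a noetherian ring is zero if and only if it dies in the localizations at the associated primes of that module, we obtain that $w\in R$ if and only if $w\in R_{\fp}$ for every height-one prime $\fp\in\Ass(\frac{\overline{R}}{R})$. Thus it suffices to verify the weakly Arf relation after localizing at an arbitrary height-one prime.

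Next I would localize the hypotheses. For any height-one $\fp$ the element $x$ remains a non-zerodivisor in $A:=R_{\fp}$, the fractions $y/x,z/x$ lie in $\overline{A}=(\overline{R})_{\fp}$, and $A$ is a one-dimensional reduced local ring which is Cohen--Macaulay because $(S_2)$ forces $\depth(A)\geq\min\{2,1\}=1$. The running hypotheses pass to $A$: $F$-purity is preserved under localization, weak normality is a local property (as recalled in the proof of Lemma \ref{first}), and the Mori condition localizes as well since $\overline{A}$ is again module-finite over $A$. Finally, since $F$-pure rings are weakly normal, in both cases $A$ is a one-dimensional reduced weakly normal local ring; module-finiteness of the normalization is supplied by the Mori hypothesis in case (ii), and in case (i) one may replace $A$ by its completion (or argue branch-by-branch) so that $\overline{A}$ is again a finite product of discrete valuation rings.

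For the one-dimensional core I would exploit that a weakly normal ring is seminormal, so that the conductor $\fc:=(A:_{\overline{A}}\overline{A})$ is a radical ideal of $\overline{A}$. Writing $\overline{A}=\prod_{i}V_i$ as a product of discrete valuation rings with valuations $v_i$, radicality of $\fc$ identifies it, branch by branch, with the product of the maximal ideals of the singular $V_i$, i.e.\ $\fc=\{a\in\overline{A}:v_i(a)\geq 1\text{ for each singular branch }i\}$. From $\{y/x,z/x\}\subset\overline{A}$ one has $v_i(y)\geq v_i(x)$ and $v_i(z)\geq v_i(x)$ for all $i$, whence $v_i(w)=v_i(y)+v_i(z)-v_i(x)\geq v_i(x)$; when $v_i(x)\geq 1$ for every $i$ (e.g.\ when $x$ lies in the Jacobson radical of $\overline{A}$) this places $w$ in $\fc\subseteq A$, and the case where $x$ is a unit is trivial. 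The delicate point, which I expect to be the main obstacle, is the branches on which $x$ is a unit (so $v_i(x)=0$): there the conductor estimate alone does not force $w\in A$, and one must invoke weak normality directly through its characteristic-$p$ description (an element $b\in\overline{A}$ with $b^{p^{e}}\in A$ for some $e$ already lies in $A$, which is the ideal-theoretic content underlying Fact \ref{f2}(i)) in order to descend $w$ from $\overline{A}$ to $A$. Reassembling the height-one verifications yields $w\in R$, which is exactly the weakly Arf property and recovers \cite[Theorem 1.1]{gl} while dropping the Mori hypothesis in case (i) and weakening $F$-purity to weak normality in case (ii).
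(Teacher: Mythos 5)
Your overall strategy --- reducing to the height-one localizations via $\Ass(\frac{\overline{R}}{R})$ together with Propositions \ref{p} and \ref{pcr2} (a perfectly good substitute for the paper's citation of \cite[Theorem 2.6]{arf}), and then arguing in a one-dimensional local ring --- is sound, and your one-dimensional mechanism (seminormality forces the conductor $\fc=(A:\overline{A})$ to be a radical ideal of $\overline{A}$, then estimate valuations) is genuinely different from the paper's. But the proof as written is incomplete at exactly the point you flag as ``the main obstacle,'' and that obstacle is in fact a phantom. If $x$ is a unit of $A$ there is nothing to prove; if $x$ is a non-unit then $x\in\fm_A$, and since $\overline{A}$ is integral over $A$ every maximal ideal $\fM_i$ of $\overline{A}$ contracts to $\fm_A$ (the domain $A/(\fM_i\cap A)$ sits integrally inside the field $\overline{A}/\fM_i$, hence is a field). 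So $x\in\fM_i$, i.e.\ $v_i(x)\geq 1$, on \emph{every} branch, and your own estimate $v_i(yz/x)\geq v_i(x)\geq 1$ already places $yz/x$ in $\bigcap_i\fM_i\subseteq\fc\subseteq A$. The case ``$v_i(x)=0$ but $x$ not a unit'' simply does not occur, and the unsupported claim that one can verify $(yz/x)^{p^e}\in A$ and then descend by weak normality is both unnecessary and, as written, unjustified. (Also, $\overline{A}$ is a product of semilocal Dedekind domains rather than of discrete valuation rings, but that does not affect the argument.)

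The more serious gap is case (i). Your conductor argument needs $\overline{A}$ to be module-finite over $A$, so that $\fc$ contains a non-zerodivisor and the radical-conductor characterization of seminormality applies; that is precisely the Mori hypothesis that case (i) is meant to dispense with. ``Replace $A$ by its completion'' does not repair this: $\widehat{A}$ is reduced iff $A$ is analytically unramified, which for a one-dimensional Cohen--Macaulay local ring is equivalent to finiteness of $\overline{A}$ --- the very hypothesis you are trying to avoid --- and you would additionally have to transfer $F$-purity to $\widehat{A}$ and the weakly Arf conclusion back to $A$. The paper sidesteps all of this: in the one-dimensional local reduced case $\Supp(\frac{\overline{A}}{A})\subseteq\{\fm\}$, so every finitely generated submodule of $\frac{\overline{A}}{A}$ has finite length; $F$-purity (resp., in case (ii), the characteristic-$p$ description of weak normality for reduced Mori rings) makes the $p$-th power map on $\frac{\overline{A}}{A}$ injective; iterating Frobenius therefore kills $\fm(a+A)$ for every $a\in\overline{A}$, whence $\fm\overline{A}\subseteq A$ and $yz/x=x\cdot\frac{y}{x}\cdot\frac{z}{x}\in\fm\overline{A}\subseteq A$. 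Your conductor approach, once the phantom obstacle is removed, does cover case (ii); for case (i) you need an argument, like the paper's, that does not presuppose finiteness of the normalization.
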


\begin{proof}By \cite[Theorem 2.6]{arf} it is enough to show that $R_{\fp}$ is   weakly
	Arf   for every $\fp$ with $\depth(R_{\fp})\leq 1$.
 Due to the $(R_0)$ and $(S_2)$ conditions, we may assume that
 $(R,\fm)$	is a one-dimensional Cohen--Macaulay local ring. Note that
	the localization of weakly normal is again weakly normal. So, $R$ is weakly normal.
	Weakly normal rings are reduce.	Over reduced Mori rings of prime characteristic $p$
	there is nice characterization of weakly normal rings: If $a\in \overline{R}$ and $a^p\in R$, then $a\in R$.
	This condition
	implies that the  assignment $x+R\in Q(R)/R\mapsto x^p+R$ induces an injection $f:Q(R)/R \hookrightarrow Q(R)/R$.
	Similarly, in the $F$-pure case,  we know  $f:Q(R)/R \hookrightarrow Q(R)/R$ is injective.
	Indeed, since $R$ is reduced,  $F:R\to R$ is injective. Tensor it with $Q(R)/R $
	we get that $F\otimes 1:Q(R)/R \hookrightarrow Q(R)/R$ is injective.

i) 	Another use of $(R_0)$ shows that $\Supp(\frac{\overline{R}}{R})\subset\{\fm\}$. Let $x+R\in \frac{\overline{R}}{R}$.
	Then $\ell((x+R)R)< \infty$, i.e., $\fm^t(x+R)R=0$ for all $t\gg 0$.  Hence $f^t(\fm((x+R)R)) =0$ for all   $t\gg 0$. The injectivity of $f$
	implies that $\fm((x+R)R)= 0$. Thus,  $\fm\frac{\overline{R}}{R}= 0$. 
	Now, let $x, y$ and $z \in R$ be  such that $ x$ is a non-zerodivisor on $R$ and that  $y/x, z/x \in \overline{R}$. 
	Then $\frac{ yz}{x}= x
	\frac{y}{x}\frac{z}{x}\in\fm \overline{R}\subset R$. By definition, $R$ is   weakly Arf.

ii)  This is in the above proof.
\end{proof}


\begin{corollary}\label{obcge}
	Let $R$ be a  complete local domain of dimension $d\geq 2$ and prime characteristic with separably closed residue field. 
	If  $R$ is generalized Cohen--Macaulay and weakly normal, then    $R$ is  weakly Arf.
\end{corollary}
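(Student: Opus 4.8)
The plan is to derive this corollary directly from the two propositions already established, namely Proposition \ref{cge} and Proposition \ref{wa}(ii). The overall structure is a short chain of implications: weak normality plus the generalized Cohen--Macaulay hypothesis forces $(S_2)$, and $(S_2)$ together with the Mori property and weak normality forces weakly Arf. So the work reduces to checking that the hypotheses of Proposition \ref{wa}(ii) are all met.

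First I would invoke Proposition \ref{cge}(i). Its hypotheses are exactly the standing ones here: $R$ is a complete local domain of dimension $d\geq 2$ and prime characteristic, with separably closed residue field, and is both generalized Cohen--Macaulay and weakly normal. That proposition then yields $\Ass(\frac{\overline{R}}{R})^{\geq 2}=\emptyset$, and in particular that $R$ is $(S_2)$. This disposes of the $(S_2)$ requirement for free.

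Next I would verify that $R$ is Mori, that is, reduced with module-finite normalization. Since $R$ is a domain it is reduced, so the only substantive point is the finiteness of $\overline{R}$ over $R$. This is precisely where completeness is used: a complete local ring is excellent, hence Nagata (pseudo-geometric), and for a Nagata domain the integral closure in the fraction field is a finite module. Therefore $\overline{R}$ is finite over $R$ and $R$ is Mori. With $R$ now known to be $(S_2)$, of prime characteristic, weakly normal and Mori, I would apply Proposition \ref{wa}(ii) to conclude that $R$ is weakly Arf.

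The only step carrying any content beyond a direct citation is the finiteness of the normalization, and even that is a standard consequence of completeness through the theory of excellent and Nagata rings; I do not expect a genuine obstacle there. Everything else is an immediate invocation of the two propositions, so the proof is essentially an assembly of previously proved facts.
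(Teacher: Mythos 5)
Your proposal is correct and follows essentially the same route as the paper: Proposition \ref{cge} gives $(S_2)$ and then Proposition \ref{wa}(ii) gives weakly Arf. Your explicit check that a complete local domain is Mori (via excellence/Nagata) is a point the paper leaves implicit, and it is exactly the right justification for invoking part (ii) of Proposition \ref{wa}.
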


\begin{proof}
	By Proposition \ref{cge},  $R$ is   $(S_2)$. So, the claim is in Proposition \ref{wa}.
\end{proof}


\begin{example}
Let $R:=k[[t^4,t^5,t^6]]$. Then $R$ is a quasi-normal local domain which is not weakly Arf.
\end{example}
\begin{proof}
As $R$ is Gorenstein, it is quasi-normal. Since $t=\frac{t^6}{t^5}\in Q(R)$,
we  deduce that $t\in\overline{R}$. Set $x:=t^5$ and $y:=z:=t^6$. Note that
$\frac{y}{x}=\frac{z}{x}=t\in\overline{R}$. Also, $\frac{yz}{x}=\frac{t^{12}}{t^5}=t^7\notin R$.
Thus,  $R$  is not weakly Arf.\end{proof}

\begin{corollary}\label{obcge}
	Let $R$ be a  quasi-normal and seminormal local Mori ring. Then   $R$ is  weakly Arf.
\end{corollary}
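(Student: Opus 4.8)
The plan is to reduce to dimension one exactly as in the proof of Proposition \ref{wa}, and there to substitute seminormality for the Frobenius input. By \cite[Theorem 2.6]{arf} it suffices to check that $R_{\fp}$ is weakly Arf for every $\fp$ with $\depth(R_{\fp})\leq 1$. Since $R$ is Mori it is reduced, so $(R_0)$ and $(S_1)$ hold, and since $R$ is quasi-normal it is $(S_2)$ (only this half of quasi-normality will be used). If $\depth(R_{\fp})=0$, then $(S_1)$ forces $\Ht(\fp)=0$, so $R_{\fp}$ is a field and is trivially weakly Arf. If $\depth(R_{\fp})=1$, then $(S_2)$ gives $\Ht(\fp)=1$, so $A:=R_{\fp}$ is a one-dimensional Cohen--Macaulay reduced local ring. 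Moreover $A$ is again seminormal (seminormality is a stalk-local property, just as was recorded for weak normality in the proof of Lemma \ref{first}), and $B:=\overline{A}=\overline{R}_{\fp}$ is module-finite over $A$, because normalization commutes with localization and $R$ is Mori. Thus everything reduces to the claim that a one-dimensional Cohen--Macaulay seminormal local ring with finite normalization is weakly Arf.

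To prove the claim, let $(A,\fm)$ be such a ring with normalization $B$ and conductor $\mathfrak{c}:=(A:_A B)$, viewed as an ideal of $B$. Here I would invoke the Greco--Traverso characterization \cite{gt}: seminormality of $A$ says precisely that $\mathfrak{c}$ is a radical ideal of $B$, so $B/\mathfrak{c}$ is reduced. Since $\mathfrak{c}\subseteq A$, the inclusion $A/\mathfrak{c}\hookrightarrow B/\mathfrak{c}$ realizes $A/\mathfrak{c}$ as a subring of a reduced ring, hence $A/\mathfrak{c}$ is reduced. On the other hand $A$ is not normal (otherwise $B=A$ and there is nothing to prove), so $\Supp(B/A)=\V(\mathfrak{c})=\{\fm\}$ by $(R_0)$; thus $\mathfrak{c}$ is $\fm$-primary and $A/\mathfrak{c}$ is artinian local. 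An artinian local ring which is reduced is a field, so $A/\mathfrak{c}$ is a field, which forces $\mathfrak{c}=\fm$. Equivalently, $\fm B\subseteq A$. This is the same key relation produced by the Frobenius argument in Proposition \ref{wa}, now obtained without any characteristic hypothesis.

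Finally I would conclude the weakly Arf property from $\fm B\subseteq A$, mimicking the end of the proof of Proposition \ref{wa}. Let $x$ be a non-zerodivisor of $A$ and $y,z\in A$ with $y/x,\,z/x\in B$. If $x$ is a unit of $A$, then $yz/x=yz\,x^{-1}\in A$; if instead $x\in\fm$, then $\frac{yz}{x}=x\cdot\frac{y}{x}\cdot\frac{z}{x}\in\fm B\subseteq A$, since $\frac{y}{x}\frac{z}{x}\in B$. As $A$ is local these two cases are exhaustive, so $yz/x\in A$ and $A$ is weakly Arf. I expect the main obstacle to be the middle paragraph: justifying that seminormality yields a radical conductor and that, together with the one-dimensional $(R_0)$ hypothesis, this collapses to $\mathfrak{c}=\fm$, and checking carefully that seminormality (and finiteness of the normalization) descend to the localizations $R_{\fp}$ so that the reduction in the first paragraph is legitimate.
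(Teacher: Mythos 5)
Your proof is correct, and while the reduction to a one-dimensional local ring via \cite[Theorem 2.6]{arf} is the same as in the paper, the way you handle that one-dimensional case is genuinely different. The paper uses the full strength of quasi-normality: the $(G_1)$ half makes $A=R_{\fp}$ Gorenstein, and then \cite[Theorem 8.1]{gt} pins down the non-normal one-dimensional Gorenstein seminormal local rings as those of multiplicity $e(A)=2$, after which \cite[Corollary 2.8]{arf} gives the weakly Arf property. You instead bypass the multiplicity classification entirely and compute the conductor: Traverso's classical lemma (seminormality of $A$ in $B=\overline{A}$ forces $\fc=(A:_AB)$ to be radical in $B$ --- note you only need this forward implication, not the ``precise'' equivalence you allude to, and it follows directly from the $b^2,b^3\in A\Rightarrow b\in A$ criterion) combined with $(R_0)$ makes $A/\fc$ a reduced artinian local ring, hence a field, so $\fc=\fm$ and $\fm\overline{A}\subset A$; the weakly Arf property then falls out exactly as at the end of Proposition \ref{wa}. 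What your route buys is generality: you never use $(G_1)$, so you in fact prove that every $(S_2)$, seminormal, Mori local ring is weakly Arf, which simultaneously strengthens this corollary and recovers Proposition \ref{wa}(ii) without the characteristic-$p$ and weak-normality hypotheses. What the paper's route buys is brevity, given that the two cited results are taken off the shelf. All the auxiliary points you flag (seminormality and the Mori property localize, normalization commutes with localization) are standard and hold as you expect.
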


\begin{proof}
Similar to Proposition \ref{wa}, without loss of generality we may assume that $R$ is a  one-dimensional and local. In addition, we can assume that $R$ is Gorenstein, seminormal and Mori. There
is nothing to prove if $R$ is regular. Then we may assume that $R$ is not normal. In particular,
we are in the situation of 
\cite[Theorem 8.1]{gt} to deduce that  $e(R)=2$. By \cite[Corollary 2.8]{arf},  $R$ is  weakly Arf.
\end{proof}

\section{ Duality  and $(G_{n})$}

We start by simplifying the following result of Hartshorne:

\begin{fact}\label{har}(See \cite[Corollary 1.14]{har})
Let $R$ be a local Gorenstein ring, and let $M$ be a finitely
generated reflexive $R$-module. Then $M$ is maximal Cohen--Macaulay if and only if $M^\ast$ is maximal
Cohen--Macaulay.	
	\end{fact}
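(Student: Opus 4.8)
The plan is to reduce the maximal Cohen--Macaulay property to the vanishing of $\Ext$, exploit that a Gorenstein ring is its own canonical module, and then close the equivalence by biduality. Write $(R,\fm)$ for the ring and $d:=\dim R$; I may assume $d\geq 1$, since when $d=0$ every finitely generated module is maximal Cohen--Macaulay and there is nothing to prove. Because $R$ is Gorenstein it is its own canonical (dualizing) module, so — passing to the completion, which alters neither depth nor the vanishing of the relevant $\Ext$ modules — local duality furnishes isomorphisms $\Ext^{j}_R(M,R)\cong \HH^{d-j}_{\fm}(M)^{\vee}$, where $(-)^{\vee}$ is Matlis duality. The first step is therefore to record the equivalence
\[
M \text{ is maximal Cohen--Macaulay} \iff \HH^{i}_{\fm}(M)=0 \ (i<d) \iff \Ext^{j}_R(M,R)=0 \ (j\geq 1).
\]

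For the forward implication I would assume $M$ is maximal Cohen--Macaulay, so that $\Ext^{j}_R(M,R)=0$ for all $j\geq 1$. Choosing a free resolution $F_\bullet\to M$ and applying $(-)^{\ast}=\Hom_R(-,R)$, this vanishing makes the dual complex exact, giving an exact sequence $0\to M^{\ast}\to F_0^{\ast}\to F_1^{\ast}\to\cdots$ with each $F_i^{\ast}$ free. Splitting it into short exact sequences $0\to B_i\to F_i^{\ast}\to B_{i+1}\to 0$ (with $B_0:=M^{\ast}$), each cosyzygy $B_i$ is a submodule of a free module, whence $\HH^0_{\fm}(B_i)=0$ and $\depth B_i\geq 1$. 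A descending chase with the depth lemma then propagates this upward: from $\depth B_i\geq\min\{d,\depth B_{i+1}+1\}$, iteration yields $\depth M^{\ast}\geq d$, i.e. $M^{\ast}$ is maximal Cohen--Macaulay. (Equivalently, one may simply quote that $R$-duality is an exact duality on the category of maximal Cohen--Macaulay modules over a Gorenstein ring.)

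For the converse I would apply the forward implication to $M^{\ast}$: if $M^{\ast}$ is maximal Cohen--Macaulay, then so is $(M^{\ast})^{\ast}=M^{\ast\ast}$. Since $M$ is assumed reflexive, the canonical map $M\to M^{\ast\ast}$ is an isomorphism, and therefore $M$ is maximal Cohen--Macaulay. This is precisely the point at which the reflexivity hypothesis enters, and it explains why it cannot be dropped: in its absence one recovers information only about $M^{\ast\ast}$.

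I expect the main obstacle to be the forward direction, namely upgrading ``$M$ maximal Cohen--Macaulay'' to ``$M^{\ast}$ maximal Cohen--Macaulay''. The naive bound gives only $\depth M^{\ast}\geq\min\{2,d\}$, which falls short of $\depth M^{\ast}=d$ as soon as $d>2$; obtaining the full depth genuinely requires the $\Ext$-vanishing coming from local duality together with the cosyzygy depth-chase above. An equivalent and perhaps cleaner packaging of this obstacle is through Gorenstein dimension: over a Gorenstein local ring $M$ is maximal Cohen--Macaulay exactly when $\Gdim M=0$ (by the Auslander--Bridger formula $\Gdim M+\depth M=\depth R$), and the condition $\Gdim M=0$ is manifestly self-dual, which delivers both implications at once once reflexivity is in hand.
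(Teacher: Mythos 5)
Your argument is correct, but it takes a genuinely different route from the paper. The paper's proof is a two-line appeal to Auslander--Bridger: over a Gorenstein local ring a finitely generated module is maximal Cohen--Macaulay iff it is totally reflexive (since $\Gdim M+\depth M=\depth R$ and $\Gdim M<\infty$ automatically), and the definition of ``totally reflexive'' is visibly symmetric in $M$ and $M^{\ast}$ once $M$ is reflexive. That is exactly the ``cleaner packaging'' you mention in your final paragraph, so you have in effect rediscovered the paper's proof as an afterthought; your main argument, however, unwinds this into elementary steps: local duality (after completing) converts ``$M$ is MCM'' into $\Ext^{j}_R(M,R)=0$ for $j\geq 1$, this vanishing makes the dualized free resolution exact so that $M^{\ast}$ becomes an $n$-th syzygy for every $n$, and the depth lemma applied to the cosyzygy sequences $0\to B_i\to F_i^{\ast}\to B_{i+1}\to 0$ yields $\depth M^{\ast}\geq\min\{d,n\}$ for all $n$, hence $\depth M^{\ast}=d$; the converse then follows by applying this to $M^{\ast}$ and invoking reflexivity, which is also where the paper uses it. Both proofs are sound. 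What the paper's approach buys is brevity and the conceptual point that the statement is really about G-dimension zero being self-dual; what yours buys is self-containedness (no black-boxed total reflexivity) and an explicit identification of where the naive depth bound $\depth M^{\ast}\geq\min\{2,d\}$ fails and how the $\Ext$-vanishing rescues it. One small point of care in your depth chase: you should note that the relevant $B_i$ are nonzero (or adopt the convention $\depth 0=\infty$) and that $M^{\ast}\neq 0$ for $M\neq 0$ maximal Cohen--Macaulay, which follows from your own $\Ext$-vanishing criterion; these are trivial but worth a word.
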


\begin{proof}
	By the Auslander-Bridger formula \cite{AB}, a module is maximal Cohen--Macaulay if and only if
	it is totally reflexive. By definition, a reflexive module is totally
	reflexive iff its dual is totally reflexive.
\end{proof}

\begin{example}
	The reflexivity of $M$ is important. To this end, let $(R,\fm)$ be any Cohen--Macaulay ring of dimension at least two. Then $\fm$ is not maximal Cohen--Macaulay (its depth is one). But, $\fm^\ast\cong R$
	is   maximal Cohen--Macaulay.
\end{example}


\begin{lemma}\label{ll}
	Let $R$ be a $d$-dimensional Cohen--Macaulay local ring with a canonical module $ \omega_R$.
	Let $M$ be  locally maximal Cohen--Macaulay on $\Spec(R)\setminus\{\fm\}$. Then, 
 $\HH^i_{\fm}(M\otimes_R \omega_R)\cong \Hom ( \HH^{d + 1 - i}_{\fm} ( M^\ast),E_R(k))$
for all $2 \leq i \leq d - 1$.
 \end{lemma}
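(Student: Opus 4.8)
The plan is to feed both sides through local duality and then compare the resulting $\Ext$-modules by a derived tensor--hom adjunction, using the hypothesis only to control what happens away from $\fm$. Since $R$ is Cohen--Macaulay of dimension $d$ with canonical module $\omega_R$, local duality gives for every finitely generated $N$ and every $j$ a natural isomorphism $\HH^j_{\fm}(N)\cong\Hom_R(\Ext^{d-j}_R(N,\omega_R),E_R(k))$. Applying this with $N=M\otimes_R\omega_R$ on the left and with $N=M^\ast$ on the right, the asserted identity becomes equivalent to a natural isomorphism between $\Ext^{d-i}_R(M\otimes_R\omega_R,\omega_R)$ and the Matlis dual of $\HH^{d+1-i}_{\fm}(M^\ast)$, and the whole problem is thereby reduced to an identity among derived functors that I will extract from a spectral sequence.

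The engine is the derived adjunction $\mathbf{R}\Hom_R(M\otimes^{\mathbf{L}}_R\omega_R,\omega_R)\cong\mathbf{R}\Hom_R(M,\mathbf{R}\Hom_R(\omega_R,\omega_R))\cong\mathbf{R}\Hom_R(M,R)$, whose last step uses that $\omega_R$ is the canonical module, so that $\Hom_R(\omega_R,\omega_R)=R$ and $\Ext^{>0}_R(\omega_R,\omega_R)=0$. Passing from the derived to the ordinary tensor product introduces the correction terms $\Tor^R_q(M,\omega_R)$, and the resulting hyper-$\Ext$ spectral sequence reads $E_2^{p,q}=\Ext^p_R(\Tor^R_q(M,\omega_R),\omega_R)\Rightarrow\Ext^{p+q}_R(M,R)$. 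Dually, applying $\mathbf{R}\Gamma_{\fm}$ to $\mathbf{R}\Hom_R(M,R)$ produces a second spectral sequence $E_2^{p,q}=\HH^p_{\fm}(\Ext^q_R(M,R))$ that carries the local cohomology of $M^\ast=\Ext^0_R(M,R)$ into the same picture. The strategy is to show both spectral sequences degenerate and splice the surviving edge terms together.

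The hypothesis enters precisely here. For $\fp\neq\fm$ the module $M_\fp$ is maximal Cohen--Macaulay over the Cohen--Macaulay local ring $R_\fp$, so localizing annihilates the higher $\Tor$ and the off-diagonal $\Ext$; hence the relevant correction modules (the $\Tor^R_q(M,\omega_R)$ for $q\ge1$ and the pertinent subquotients of the $\Ext^q_R(M,R)$) are supported only at $\fm$, i.e. of finite length. A finite length module $T$ satisfies $\Ext^p_R(T,\omega_R)=0$ for $p\neq d$ with $\Ext^d_R(T,\omega_R)\cong\Hom_R(T,E_R(k))$, and $\HH^p_{\fm}(T)=0$ for $p\neq0$. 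Inserting this collapses each spectral sequence to its $q=0$ row together with a single extremal column, so that in the range $2\le i\le d-1$ there survives exactly one connecting differential joining $\Ext^{d-i}_R(M\otimes_R\omega_R,\omega_R)$ to $\HH^{d+1-i}_{\fm}(M^\ast)$; that differential is forced to be an isomorphism because the abutment vanishes in the two total degrees $d-i$ and $d-i+1$, both of which are nonzero precisely for $2\le i\le d-1$. This is the source of the degree shift $i\mapsto d+1-i$ and of the stated range.

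The main obstacle is the bookkeeping that isolates this single surviving differential, together with the natural identification of $M\otimes_R\omega_R$ with a canonical dual of $M^\ast$ on the punctured spectrum. Concretely, one must understand the evaluation map $M\otimes_R\omega_R\to\Hom_R(M^\ast,\omega_R)$: it is an isomorphism where $M$ is locally free but not in general, and the point is to show that its kernel and cokernel are of finite length under only the maximal Cohen--Macaulay hypothesis, so that they perturb only $\HH^0_{\fm}$ and $\HH^1_{\fm}$ and are invisible in the middle range. This is exactly why the boundary cases $i=1$ and $i=d$ are excluded: there the low and high local cohomologies receive contributions from these finite-length corrections, and the clean duality breaks. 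Once the finite-length support statements and the degeneration are in place, assembling the two collapsed spectral sequences with the two applications of local duality yields the claimed isomorphism for $2\le i\le d-1$.
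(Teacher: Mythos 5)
The skeleton you chose --- local duality on both sides, the adjunction $\mathbf{R}\Hom_R(M\otimes^{\mathbf{L}}_R\omega_R,\omega_R)\cong\mathbf{R}\Hom_R(M,\mathbf{R}\Hom_R(\omega_R,\omega_R))\cong\mathbf{R}\Hom_R(M,R)$, and the collapse of the two hypercohomology spectral sequences --- is the right one, and it is essentially what the ``straightforward modification of \cite[Proposition 1.13]{har}'' invoked by the paper amounts to (the paper prints no argument of its own). The gap is in the one sentence that powers the collapse: you claim that for $\fp\neq\fm$, because $M_\fp$ is maximal Cohen--Macaulay over the Cohen--Macaulay local ring $R_\fp$, ``localizing annihilates the higher $\Tor$ and the off-diagonal $\Ext$'', hence $\Tor^R_q(M,\omega_R)$ and $\Ext^q_R(M,R)$ for $q\geq 1$ have finite length. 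Neither vanishing is a consequence of maximal Cohen--Macaulayness unless $R_\fp$ is Gorenstein. Indeed, if $\Ext^q_A(N,A)=0$ for all $q\geq 1$ and all maximal Cohen--Macaulay modules $N$ over a Cohen--Macaulay local ring $A$, then applying this to $N=\Omega^{\dim A}(L)$ for arbitrary $L$ gives $\Ext^{>\dim A}_A(L,A)=0$, so $A$ has finite injective dimension and is Gorenstein; concretely, $N=\Omega^{\dim A+1}(k)$ is maximal Cohen--Macaulay with $\Ext^1_A(N,A)\cong\Ext^{\dim A+2}_A(k,A)\neq 0$ whenever $A$ is not Gorenstein. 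The same syzygy trick shows that $\Tor^A_{>0}(N,\omega_A)=0$ for all maximal Cohen--Macaulay $N$ would force $\omega_A$ to have finite projective dimension, hence to be free, hence $A$ Gorenstein. Taking $R=S[[x]]$ with $S$ a non-Gorenstein Cohen--Macaulay local domain produces rings satisfying the hypotheses of the lemma together with modules $M$, maximal Cohen--Macaulay (so certainly locally so off $\fm$), whose correction terms are supported at a non-maximal prime. The same defect undermines your last paragraph: the kernel and cokernel of the evaluation map $M\otimes_R\omega_R\to\Hom_R(M^\ast,\omega_R)$ are $\Ext^1_R(\mathrm{Tr}\,M,\omega_R)$ and $\Ext^2_R(\mathrm{Tr}\,M,\omega_R)$, and these need not have finite length under the stated hypotheses.

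Everything you wrote becomes correct, essentially verbatim, if one adds the hypothesis that $R_\fp$ is Gorenstein for all $\fp\neq\fm$, i.e. that $R$ satisfies $(G_{d-1})$: then $\omega_{R_\fp}\cong R_\fp$ and maximal Cohen--Macaulay modules over Gorenstein local rings are totally reflexive, so the higher $\Tor$ and $\Ext$ do localize to zero, both spectral sequences degenerate as you describe, the surviving differential $\Ext^{m-1}_R(M,R)\to\HH^m_\fm(M^\ast)$ is an isomorphism for $2\leq m\leq d-1$, and combining it with $\HH^i_\fm(M\otimes_R\omega_R)\cong\Hom(\Ext^{d-i}_R(M,R),E_R(k))$ gives the statement. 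That restricted version is the only one the paper uses (Corollary \ref{o2} assumes $(G_{d-1})$), and it is the exact analogue of Hartshorne's Gorenstein statement. As written, however, your argument does not prove the lemma in the generality in which it is stated; you must either add the $(G_{d-1})$-type hypothesis or supply a genuinely different treatment of the correction terms over non-Gorenstein localizations.
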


\begin{proof}
	This  is a straightforward modification of \cite[Proposition 1.13]{har}, and we leave it to the reader.
\end{proof}

\begin{corollary}\label{DG}
	Let $R$ be a local Gorenstein ring, and let $M$ be a finitely
	generated  $R$-module of dimension $d:=\dim R$. If $M$ is generalized Cohen--Macaulay (resp. quasi-Buchsbaum) then  $M^\ast$ is  generalized 
	Cohen--Macaulay (resp. quasi-Buchsbaum).	
	The converse holds if  $M$ is reflexive.
\end{corollary}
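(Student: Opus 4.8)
The plan is to prove the forward implication first and then deduce the converse formally. Since $R$ is Gorenstein, its canonical module is $\omega_R\cong R$, so $M\otimes_R\omega_R\cong M$ and Lemma \ref{ll} specializes to an isomorphism $\HH^i_\fm(M)\cong\Hom(\HH^{d+1-i}_\fm(M^\ast),E_R(k))$ for $2\le i\le d-1$. To be allowed to invoke the lemma I first have to check its hypothesis, namely that $M$ is locally maximal Cohen--Macaulay on the punctured spectrum. This I would extract from the generalized Cohen--Macaulay assumption: for $\fp\neq\fm$ the module $M_\fp$ is Cohen--Macaulay with $\dim M_\fp=d-\dim R/\fp$, and since $R$ is Gorenstein (hence catenary and equidimensional) one has $\dim R_\fp=d-\dim R/\fp$, whence $\depth_{R_\fp}M_\fp=\dim R_\fp$, i.e.\ $M_\fp$ is maximal Cohen--Macaulay. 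This is the standard description of generalized Cohen--Macaulay modules already used in the proof of Proposition \ref{ge}.

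Next I would record two facts about $M^\ast$. First, $\dim M^\ast=d$: at a minimal prime $\fp\in\Supp M$ with $\dim R/\fp=d$ the ring $R_\fp$ is Gorenstein Artinian and $\Hom_{R_\fp}(M_\fp,R_\fp)\neq 0$, so $\fp\in\Supp M^\ast$. Second, $\depth M^\ast\ge\min\{2,d\}$: writing a presentation $R^a\to R^b\to M\to 0$ and dualizing exhibits $M^\ast$ as the kernel of a map $R^b\to R^a$, and a short local-cohomology computation using $\depth R=d$ gives the bound. For $d\ge 2$ this already forces $\HH^0_\fm(M^\ast)=\HH^1_\fm(M^\ast)=0$, which disposes of the two boundary degrees $j=0,1$ for free; the cases $d\le 1$ are trivial, since then the generalized Cohen--Macaulay and quasi-Buchsbaum conditions are vacuous or automatic (again using $\depth M^\ast\ge\min\{2,d\}$).

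The core of the argument is then the range $2\le j\le d-1$. Here I would put $j=d+1-i$ and read the specialized Lemma \ref{ll} backwards: $\HH^j_\fm(M^\ast)$ is Artinian and its Matlis dual is $\HH^i_\fm(M)$ with $2\le i\le d-1$, which has finite length because $M$ is generalized Cohen--Macaulay. Since Matlis duality both preserves and reflects finite length, $\HH^j_\fm(M^\ast)$ has finite length; combined with the boundary degrees this shows $M^\ast$ is generalized Cohen--Macaulay. For the quasi-Buchsbaum version I would use the same isomorphism together with the fact that a finite-length module $N$ satisfies $\fm N=0$ if and only if its Matlis dual does (the annihilators agree for Matlis-reflexive modules); thus $\fm\HH^i_\fm(M)=0$ transfers to $\fm\HH^{d+1-i}_\fm(M^\ast)=0$ in the same range, while the boundary degrees vanish.

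Finally the converse is purely formal: if $M$ is reflexive then $M\cong(M^\ast)^\ast$, and $M^\ast$ again has dimension $d$, so applying the forward implication to $N:=M^\ast$ shows that $N^\ast=M$ is generalized Cohen--Macaulay (resp.\ quasi-Buchsbaum) whenever $M^\ast$ is. I expect the only genuinely delicate points to be the justification that $M$ is locally maximal Cohen--Macaulay on the punctured spectrum (so that Lemma \ref{ll} applies) and the careful handling of the two boundary cohomology degrees $j=0,1$ that fall outside the range $2\le i\le d-1$ covered by the lemma; everything else is standard Matlis-duality formalism.
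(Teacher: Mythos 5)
Your proposal is correct and follows essentially the same route as the paper: specialize Lemma \ref{ll} to the Gorenstein case where $\omega_R\cong R$, use $\depth(M^\ast)\geq\min\{2,d\}$ for the boundary degrees, transfer finite length (resp.\ annihilators) across Matlis duality in the middle range, and obtain the converse from $M\cong M^{\ast\ast}$. You are in fact somewhat more careful than the paper, which leaves implicit the verification that $M$ is locally maximal Cohen--Macaulay on the punctured spectrum and that $\dim M^\ast=d$.
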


\begin{proof}First, we deal with the case that  $M$ is generalized Cohen--Macaulay.
	We may assume that $d>1$ and we are going to show $\ell(\HH^{i}_{\fm} ( M^\ast))<\infty$.
	Recall that $M$ is locally maximal Cohen--Macaulay on $\Spec(R)\setminus\{\fm\}$.
 We Have $\HH^0_{\fm}( M^\ast)=\HH^1_{\fm}( M^\ast)=0$, because $\depth( M^\ast)\geq 2$. Let $2\leq i\leq d$.
Since Matlis duality does
not change the length, and in view of Lemma \ref{ll} we deduce that $\ell(\HH^{i}_{\fm} ( M^\ast))<\infty$. By repeating this argument, we have $\ell(\HH^{i}_{\fm} ( M^{\ast\ast}))<\infty$. 
Suppose $M$ is reflexive. Then we have $\ell(\HH^{i}_{\fm} ( M))<\infty$.
Now, recall that a module is called quasi-Buchsbaum if $\fm\HH^i_{\fm}(-)=0$ for all $i<\dim(-)$. As Matlis duality does
not change the annihilators, and in view of Lemma \ref{ll}, we get the claim.
\end{proof}

\begin{corollary}
	Suppose in addition to Corollary \ref{DG}  that
	$\HH^{i}_{\fm} ( M)=0$ for all $i\ne \depth(M),\dim(M)$.
	Then $M^\ast$ is  Buchsbaum if $M$ is  Buchsbaum.		The converse holds if  $M$ is reflexive.
\end{corollary}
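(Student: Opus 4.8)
The plan is to collapse the Buchsbaum property onto the quasi-Buchsbaum property already settled in Corollary \ref{DG}, the extra vanishing hypothesis being exactly what makes this collapse possible. I would start from the cohomological description of the two notions: for a finitely generated $R$-module $N$ with $d:=\dim N$, one has $N$ Buchsbaum iff the canonical maps $\phi^i_N\colon\Ext^i_R(k,N)\to\HH^i_\fm(N)$ are surjective for all $i<d$, whereas $N$ is quasi-Buchsbaum iff $\fm\HH^i_\fm(N)=0$ for all $i<d$. Because $\Ext^i_R(k,N)$ is a $k$-vector space, $\im\phi^i_N$ always lies in $\Soc(\HH^i_\fm(N))=(0:_{\HH^i_\fm(N)}\fm)$; in particular Buchsbaum forces $\HH^i_\fm(N)=\Soc(\HH^i_\fm(N))$, i.e.\ quasi-Buchsbaum, and the only question is the reverse implication.

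The key step is the following: if $\HH^i_\fm(N)=0$ for every $i\neq r:=\depth N$ and $i\neq d:=\dim N$, then $N$ is Buchsbaum iff it is quasi-Buchsbaum. Indeed, among the maps with $i<d$ only $\phi^r_N$ can fail to be surjective, since all other intermediate cohomologies vanish. At the depth level the natural map identifies $\Ext^r_R(k,N)$ with the socle: from the spectral sequence $\Ext^p_R(k,\HH^q_\fm(N))\Rightarrow\Ext^{p+q}_R(k,N)$, in which every term with $q<r$ vanishes, one gets $\Ext^r_R(k,N)\cong\Hom_R(k,\HH^r_\fm(N))=\Soc(\HH^r_\fm(N))$ with $\phi^r_N$ becoming the socle inclusion. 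Hence $\im\phi^r_N=\Soc(\HH^r_\fm(N))$, so $\phi^r_N$ is surjective precisely when $\HH^r_\fm(N)=\Soc(\HH^r_\fm(N))$, i.e.\ when $\fm\HH^r_\fm(N)=0$. Verifying that this image is the full socle is the one genuinely technical point; the rest is formal. (Note that $\fm\HH^r_\fm(N)=0$ makes the Artinian module $\HH^r_\fm(N)$ a finite-dimensional $k$-space, so such an $N$ is automatically generalized Cohen--Macaulay and Corollary \ref{DG} applies.)

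Next I would propagate the hypothesis to $M^\ast$. As $R$ is Gorenstein, $\omega_R\cong R$, so Lemma \ref{ll} reads $\HH^i_\fm(M)\cong\Hom_R(\HH^{d+1-i}_\fm(M^\ast),E_R(k))$ for $2\le i\le d-1$. Since $E_R(k)$ is an injective cogenerator, $\HH^{d+1-i}_\fm(M^\ast)=0$ exactly when $\HH^i_\fm(M)=0$ throughout this range. Writing $r=\depth M$ and using $\dim M=d$, the assumption gives $\HH^i_\fm(M)=0$ for $i\neq r,d$; together with $\depth M^\ast\geq2$ (the dual $\Hom_R(M,R)$ is $(S_2)$ and $\dim M^\ast=d\geq2$) the isomorphisms force $\HH^j_\fm(M^\ast)=0$ for all $j\neq d+1-r,d$, with $\depth M^\ast=d+1-r$. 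Thus $M^\ast$ obeys the same two-level hypothesis as $M$, and the key step of the previous paragraph applies to $M^\ast$ as well.

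Finally I would chain the equivalences. If $M$ is Buchsbaum, then by the key step $M$ is quasi-Buchsbaum; Corollary \ref{DG} gives $M^\ast$ quasi-Buchsbaum; and since $M^\ast$ has cohomology only in degrees $d+1-r$ and $d$, the key step upgrades this to $M^\ast$ Buchsbaum. When $M$ is reflexive, the quasi-Buchsbaum implication of Corollary \ref{DG} is reversible, so $M^\ast$ Buchsbaum $\Rightarrow M^\ast$ quasi-Buchsbaum $\Rightarrow M$ quasi-Buchsbaum $\Rightarrow M$ Buchsbaum, giving the converse. The degenerate case $r=d$ (here $M$ is maximal Cohen--Macaulay, hence reflexive and Buchsbaum) is immediate, since Fact \ref{har} makes $M^\ast$ maximal Cohen--Macaulay and thus Buchsbaum. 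I expect the socle identification of the second paragraph to be the main obstacle; everything after it is bookkeeping with Lemma \ref{ll} and Corollary \ref{DG}.
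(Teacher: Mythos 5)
Your argument is correct and follows the same skeleton as the paper's proof, which is a one-line combination of Corollary \ref{DG} with \cite[Proposition I.2.12]{bus}; that cited proposition is exactly your ``key step'' (a generalized Cohen--Macaulay module whose local cohomology is concentrated in degrees $\depth$ and $\dim$ is Buchsbaum iff it is quasi-Buchsbaum). What you add is a from-scratch proof of that criterion via the surjectivity criterion and the spectral sequence $\Ext^p_R(k,\HH^q_{\fm}(N))\Rightarrow\Ext^{p+q}_R(k,N)$, together with an explicit verification --- left implicit in the paper --- that Lemma \ref{ll} transfers the two-degree vanishing hypothesis from $M$ to $M^\ast$ (using $\depth M^\ast\geq 2$); both of these are worth writing out. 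One blemish: your opening claim that $N$ is Buchsbaum \emph{iff} the maps $\phi^i_N\colon\Ext^i_R(k,N)\to\HH^i_{\fm}(N)$ are surjective for $i<\dim N$ is an overstatement --- surjectivity is sufficient but not necessary for Buchsbaumness. This does no damage, since the only implications you actually use are ``surjectivity for all $i<\dim N$ implies Buchsbaum'' (true) and ``Buchsbaum implies quasi-Buchsbaum'' (true, but it should be quoted as the standard fact $\fm\HH^i_{\fm}(N)=0$ for $i<\dim N$ rather than derived from the false converse). Also note that when $\depth(M)\leq 1$ the degree $d+1-\depth(M)$ falls outside the range of Lemma \ref{ll}, but there your computation simply shows $M^\ast$ is maximal Cohen--Macaulay, so the conclusion is immediate, as you observe for the case $\depth(M)=\dim(M)$.
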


\begin{proof}
	 Combine  \cite[Proposition I.2.12]{bus} with Corollary \ref{DG}.
\end{proof}

\begin{corollary}\label{ext}
	Let $R$ be a complete local Gorenstein ring of dimension $d$. If $M$ is generalized Cohen--Macaulay and  of dimension  $t$,
	then  $\Ext^{d-t}_R(M,R)$ is as well.
\end{corollary}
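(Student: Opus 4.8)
The plan is to realize $\Ext^{d-t}_R(M,R)$ as an ordinary dual over a Gorenstein quotient ring of dimension $t$ and then to invoke Corollary \ref{DG}, which already settles the full-dimensional case. First I would manufacture that quotient. Since $R$ is Cohen--Macaulay and $\dim(R/\Ann_R M)=\dim M=t$, in the equidimensional catenary local ring $R$ one has $\Ht(\Ann_R M)=d-t$, and because grade equals height over a Cohen--Macaulay ring, $\gr(\Ann_R M,R)=d-t$. Hence $\Ann_R M$ contains an $R$-regular sequence $\fx=x_1,\dots,x_{d-t}$. Set $A:=R/(\fx)$. As the quotient of a complete local Gorenstein ring by a regular sequence, $A$ is again complete local Gorenstein, now of dimension $t$, and $M$ is a finitely generated $A$-module with $\dim_A M=t=\dim A$.

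Next I would push $\Ext^{d-t}_R(M,R)$ through a change of rings. Since $\fx$ is an $R$-regular sequence lying in $\Ann_R M$, Rees' isomorphism gives $\Ext^i_R(M,R)=0$ for $i<d-t$ together with an isomorphism of $A$-modules $\Ext^{d-t}_R(M,R)\cong\Hom_A(M,A)$. The generalized Cohen--Macaulay property is insensitive to this change of rings: local cohomology is computed identically over $R$ and over $A$, so $\HH^i_{\fm}(M)=\HH^i_{\fm A}(M)$ with the same finite lengths, and therefore $M$ is generalized Cohen--Macaulay of dimension $t=\dim A$ over $A$. Corollary \ref{DG}, applied over $A$ to the full-dimensional module $M$, then yields that $\Hom_A(M,A)$ is generalized Cohen--Macaulay over $A$. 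Transporting this back through the isomorphism above, and using once more that local cohomology and dimension do not change under $R\to A$, I would conclude that $\Ext^{d-t}_R(M,R)$ is generalized Cohen--Macaulay over $R$.

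The step I expect to be the crux is the second one, namely the choice of $\fx$ and the Rees isomorphism $\Ext^{d-t}_R(M,R)\cong\Hom_A(M,A)$. The existence of a regular sequence of the exact length $d-t$ inside $\Ann_R M$ rests on $R$ being Cohen--Macaulay, so that $\Ht(\Ann_R M)=\gr(\Ann_R M,R)=d-t$, while the isomorphism itself is the perfectness of $A$ as an $R$-module. The remaining ingredients, namely the invariance of length, dimension, and local cohomology under killing a regular sequence contained in the annihilator, are routine and need only be recorded.
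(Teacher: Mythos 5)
Your argument is correct, but it takes a genuinely different route from the paper's. The paper does not reduce to the full-dimensional case: it quotes Grothendieck's result that $\Ext^{d-t}_R(M,R)$ has dimension $t$, observes that this module is locally Cohen--Macaulay on the punctured spectrum because $M$ is, and then checks, by localizing at a height $d-1$ prime containing a given associated prime, that every $\fp\in\Ass(\Ext^{d-t}_R(M,R))\setminus\{\fm\}$ satisfies $\dim R/\fp=t$; the conclusion then follows from the support-theoretic characterization of generalized Cohen--Macaulayness in \cite{sch}. You instead pass to $A:=R/(\fx)$ for a maximal $R$-regular sequence $\fx$ inside $\Ann_R M$ and use Rees's isomorphism $\Ext^{d-t}_R(M,R)\cong\Hom_A(M,A)$ together with the already-established Corollary \ref{DG} over $A$. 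The steps you flag as crucial are sound: $\Ht(\Ann_R M)=d-t$ because a Cohen--Macaulay local ring is catenary and equidimensional, grade equals height there, Rees's theorem applies since $\fx$ is $R$-regular and contained in $\Ann_R M$ (so that $\fx$ also kills the Ext module, making the identification one of $A$-modules and hence of $R$-modules), and the independence theorem for local cohomology transfers the generalized Cohen--Macaulay property across $R\to A$ in both directions; moreover $\dim_A M=t=\dim A$, so the dimension hypothesis of Corollary \ref{DG} is met. What your approach buys is a clean recycling of the duality machinery (Lemma \ref{ll} via Corollary \ref{DG}), giving the finiteness of $\ell(\HH^i_{\fm}(\Ext^{d-t}_R(M,R)))$ directly; what the paper's approach buys is independence from Corollary \ref{DG} and its full-dimensionality hypothesis, at the cost of invoking the characterization from \cite{sch}.
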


\begin{proof}We may assume that  $t>0$.
	The fact that $\Ext^{d-t}_R(M,R)$ is $t$-dimensional is due to Grothendieck, see \cite[6.4.4]{41}.
	Since   $M$ is generalized Cohen--Macaulay, $M$ is locally   Cohen--Macaulay on $\Spec(R)\setminus\{\fm\}$. It follows that
	$\Ext^{d-t}_R(M,R)$ is locally Cohen-Macaulay. Let  $\fp\in\Ass(\Ext^{d-t}_R(M,R))\setminus\{\fm\}$.
	There is $P\in\Supp(\Ext^{d-t}_R(M,R))$ of height $d-1$ which contains $\fp$.
	Localization at $P$ shows that $\fp R_P\in\Ass(\Ext^{d-t}_{R_P}(M_P,R_P))$.
	In particular, $\Ext^{d-t}_{R_P}(M_P,R_P)$ is a nonzero Cohen--Macaulay module and of dimension $t-1$.
	In particular, it is equidimensional. Thus, $\dim(\frac{R_P}{\fp R_P})=t-1$. From this,
	$\dim(R/\fp)\geq (t-1)+1=t$. 
	Also, the reverse inequality is true, because  $\Ext^{d-t}_R(M,R)$ is $t$-dimensional. These properties characterizes the generalized Cohen--Macaulay property, see \cite{sch}. Thus, $\Ext^{d-t}_R(M,R)$ is generalized Cohen--Macaulay.
\end{proof}
\begin{corollary}\label{ext}
	Let $R$ be a $d$-dimensional complete Cohen--Macaulay local ring  equipped with a canonical module $ \omega_R$. If $M$ is generalized Cohen--Macaulay of dimension  $t$,
	then  $\Ext^{d-t}_R(M, \omega_R)$ is  generalized 
	Cohen--Macaulay.	
\end{corollary}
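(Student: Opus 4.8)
The plan is to reduce the statement to the Gorenstein case settled in the previous corollary. We may assume $t>0$, since a module of dimension zero has finite length and is automatically generalized Cohen--Macaulay (its Matlis dual $\Ext^{d}_R(M,\omega_R)$ is then of finite length). As $R$ is complete, Cohen's structure theorem lets us write $R=S/I$ with $(S,\fn)$ a complete regular local ring of dimension $n$. Since $R$ is Cohen--Macaulay of dimension $d$, the ideal $I$ has height $n-d$ and $\omega_R\cong\Ext^{n-d}_S(R,S)$.

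The crux is the identification $\Ext^{d-t}_R(M,\omega_R)\cong\Ext^{n-t}_S(M,S)$. The cleanest way I would obtain it is by comparing local duality over the two rings. Over the complete Cohen--Macaulay ring $R$ with canonical module $\omega_R$, local duality gives $\Ext^{d-t}_R(M,\omega_R)\cong\Hom(\HH^t_{\fm}(M),E_R(k))$; over the complete regular ring $S$, for which $\omega_S=S$, it gives $\Ext^{n-t}_S(M,S)\cong\Hom(\HH^t_{\fn}(M),E_S(k))$. Because $I$ annihilates $M$, the local cohomology modules $\HH^\bullet_{\fm}(M)$ are computed identically over $R$ and over $S$ and are themselves annihilated by $I$; for such modules the Matlis duals over $S$ and over $R$ coincide. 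Hence the two right-hand sides agree, and the desired isomorphism follows. (Alternatively one may derive it from the adjunction $R\Hom_R(M,R\Hom_S(R,S))\simeq R\Hom_S(M,S)$ together with $R\Hom_S(R,S)\simeq\omega_R[-(n-d)]$.)

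Granting this isomorphism, the conclusion is immediate. Regarding $M$ as an $S$-module, its dimension remains $t$ and all of its local cohomology modules are unchanged, so $M$ is a generalized Cohen--Macaulay $S$-module of dimension $t$. As $S$ is complete regular, hence Gorenstein, of dimension $n$, the previous corollary applied to the pair $(S,M)$ shows that $\Ext^{n-t}_S(M,S)$ is generalized Cohen--Macaulay. Transporting this back through $\Ext^{d-t}_R(M,\omega_R)\cong\Ext^{n-t}_S(M,S)$ gives the claim.

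The only real obstacle is verifying the change-of-rings isomorphism; once it is in hand nothing further is needed. I expect the subtlety to lie entirely in matching conventions---the grade $n-d$, the direction of the shift, and the compatibility of Matlis duality over $R$ and over $S$---which is precisely why I would route the argument through local duality rather than unwinding the dualizing complex by hand.
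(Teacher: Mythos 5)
Your proof is correct and is essentially the argument the paper intends: the paper states this corollary without proof as an immediate consequence of the preceding Gorenstein case, and the evident route is exactly your reduction --- write the complete ring as $R=S/I$ with $S$ regular, identify $\Ext^{d-t}_R(M,\omega_R)\cong\Ext^{n-t}_S(M,S)$ via local duality (or the adjunction with $R\Hom_S(R,S)\simeq\omega_R[-(n-d)]$), and apply the Gorenstein version over $S$. The change-of-rings isomorphism you flag as the only real obstacle is indeed standard and your justification of it (independence of base for $\HH^\bullet_{\fm}(M)$ together with $E_R(k)\cong\Hom_S(R,E_S(k))$) is sound.
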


\begin{corollary}\label{o2}
		Let $R$ be a $d$-dimensional complete Cohen--Macaulay local ring satisfying $(G_{d-1})$ and equipped with a canonical module $ \omega_R$. Then
		$ \omega_R\otimes  \omega_R$ is generalized Cohen--Macaulay. In
		particular, $\dim R/\fp=\dim R$ for all $\fp\in\Ass( \omega_R\otimes  \omega_R)\setminus\{\fm\}$.
\end{corollary}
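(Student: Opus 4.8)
The plan is to exploit the Gorenstein-in-codimension hypothesis $(G_{d-1})$ to show that $\omega_R$ is locally free — hence locally maximal Cohen--Macaulay — on the punctured spectrum, and then reduce the statement about $\omega_R\otimes\omega_R$ to a single application of Corollary \ref{DG} (or \ref{ext}) together with Proposition \ref{p}. First I would observe that for every prime $\fp\neq\fm$, we have $\Ht(\fp)\leq d-1$, so the hypothesis $(G_{d-1})$ says that $R_{\fp}$ is Gorenstein. Over a Gorenstein local ring the canonical module is free of rank one, so $(\omega_R)_{\fp}\cong R_{\fp}$; consequently $(\omega_R\otimes_R\omega_R)_{\fp}\cong R_{\fp}$ is maximal Cohen--Macaulay over $R_{\fp}$. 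Thus $\omega_R\otimes_R\omega_R$ is locally maximal Cohen--Macaulay on $\Spec(R)\setminus\{\fm\}$, which is exactly the condition needed to invoke the duality machinery of this section.

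Next I would promote this local statement to the global generalized-Cohen--Macaulay conclusion. The cleanest route is to write $\omega_R\otimes_R\omega_R\cong\Hom_R(\omega_R,\omega_R\otimes_R\omega_R)^{\ast\ast}$ -- or more directly, to note that since $R$ is Cohen--Macaulay with canonical module, $\omega_R$ is itself maximal Cohen--Macaulay and reflexive, so $M:=\omega_R\otimes\omega_R$ is a finitely generated module that is locally MCM off $\fm$; a module of full dimension $d$ which is locally Cohen--Macaulay on the punctured spectrum has $\ell(\HH^i_{\fm}(M))<\infty$ for all $i<d$, which is precisely the definition of generalized Cohen--Macaulay (see \cite{sch}). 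This is the same finiteness argument used in the proof of Corollary \ref{DG}, applied here to $M$ directly rather than to a dual.

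For the ``in particular'' clause I would combine the generalized-Cohen--Macaulay conclusion with the structure of its associated primes. A generalized Cohen--Macaulay module of dimension $d$ is, away from $\fm$, locally Cohen--Macaulay and hence locally equidimensional; by the characterization recalled in Corollary \ref{ext}, for any $\fp\in\Ass(M)\setminus\{\fm\}$ one gets $\dim R/\fp=\dim R=d$. Concretely: pick $\fp\in\Ass(\omega_R\otimes\omega_R)\setminus\{\fm\}$, choose $P\supseteq\fp$ maximal among non-maximal primes in the support, localize at $P$ to see that $M_P$ is a nonzero Cohen--Macaulay (hence equidimensional) module, and read off $\dim(R/\fp)\geq\dim(R_P/\fp R_P)+\dim(R/P)=d$; the reverse inequality is automatic. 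The main obstacle is verifying that $M$ has dimension exactly $d$ so that the generalized-Cohen--Macaulay framework applies with the right top cohomological degree; this follows because $\omega_R$ has full support $\Spec(R)$ (its support is all of $\Spec R$ as a faithful module over the $d$-dimensional ring $R$), whence $M=\omega_R\otimes\omega_R$ does too.
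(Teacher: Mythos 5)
Your proposal is correct, but it follows a genuinely different route from the paper's. You verify the local characterization of generalized Cohen--Macaulayness from \cite{sch} directly: $(G_{d-1})$ gives $(\omega_R)_{\fp}\cong R_{\fp}$, hence $(\omega_R\otimes\omega_R)_{\fp}\cong R_{\fp}$ is maximal Cohen--Macaulay for every $\fp\neq\fm$; since $\omega_R\otimes\omega_R$ has full support over the complete, Cohen--Macaulay (hence catenary and equidimensional) ring $R$, the accompanying dimension condition $\dim M_{\fp}+\dim R/\fp=\dim M$ is automatic, and both the generalized Cohen--Macaulay conclusion and the ``in particular'' clause drop out of the same characterization. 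The paper instead controls the local cohomology of $\omega_R\otimes\omega_R$ degree by degree: it first shows $\omega_R^{\ast}$ is generalized Cohen--Macaulay and invokes the duality of Lemma \ref{ll} to get $\ell(\HH^{i}_{\fm}(\omega_R\otimes\omega_R))<\infty$ in the middle range, and then treats $i=1$ separately by embedding $\omega_R$ as an ideal (generic Gorensteinness) and extracting $\HH^{1}_{\fm}(\omega_R\otimes\omega_R)\cong\HH^{0}_{\fm}(\omega_R/\omega_R^{2})$ from the four-term sequence $(\ast)$. Your argument is shorter and avoids Lemma \ref{ll} entirely; what the paper's longer computation buys is precisely the explicit identification of $\HH^{1}_{\fm}(\omega_R\otimes\omega_R)$, which is reused in Corollary \ref{sym}, so your proof would not suffice as a substitute for that later application.

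One small caution: your blanket principle that a module of dimension $d$ which is locally Cohen--Macaulay on the punctured spectrum is ``precisely'' generalized Cohen--Macaulay is not the definition; the characterization in \cite{sch} also requires $\dim M_{\fp}+\dim R/\fp=\dim M$ on the punctured support (for instance $R/(x)\oplus R/(y,z,w)$ over $k[[x,y,z,w]]$ is locally Cohen--Macaulay off $\fm$ yet not generalized Cohen--Macaulay). In your setting this is harmless, because you do verify that the support of $\omega_R\otimes\omega_R$ is all of $\Spec(R)$ and $R$ is Cohen--Macaulay local, so every minimal prime of the support has coheight $d$ and the extra condition holds.
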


\begin{proof}Without loss of generality we may assume that $d\geq 2$.
We know that $ \omega_R^\ast$ is locally free over the punctured spectrum
and that $\Ass( \omega_R^\ast)	=\Ass(\Hom_R( \omega_R,R))=\Supp( \omega_R)\cap\Ass(R)
=\Ass(R)$. From this, and in view of see \cite{sch}, we know that $ \omega_R^\ast$ is 	generalized Cohen--Macaulay.
In view of Lemma \ref{ll} we deduce that $\ell(\HH^{i}_{\fm} ( \omega_R\otimes  \omega_R))<\infty$
for all $2\leq i\leq d$. Now we compute  $\HH^{1}_{\fm} ( \omega_R\otimes  \omega_R)$. Since $R$ is generically Gorenstein, the canonical module
can be identified with an ideal. In particular, there is an exact sequence $0\to  \omega_R \to R\to R/ \omega_R\to 0$.
We drive the following exact sequence$$0\lo \Tor_1^R(R/ \omega_R, \omega_R)\lo  \omega_R\otimes_R  \omega_R\lo  \omega_R\lo  \omega_R\otimes_R R/ \omega_R\lo 0\quad(\ast)$$
We break down
$(\ast)$ into
a) $0\to \Tor_1^R(R/ \omega_R, \omega_R)\to  \omega_R\otimes_R  \omega_R \to L\to 0$ and
b) $0\to L\to   \omega_R\to   \omega_R/ \omega_R^2\to 0$.
Since $ \omega_R$ is locally free, $\Tor_1^R(R/ \omega_R, \omega_R)$ is of finite length. We conclude   from Grothendieck
vanishing theorem that $\HH^{1}_{\fm} (\Tor_1^R(R/ \omega_R, \omega_R))=0$.
We put this into the exact sequence induced by a)  to observe
that $\HH^{1}_{\fm} ( \omega_R\otimes  \omega_R)\cong\HH^{1}_{\fm} (L).$
By b) we have 
$0=\HH^0_{\fm}( \omega_R)\to \HH^0_{\fm}( \omega_R/ \omega_R^2)\to \HH^1_{\fm}(L)\to \HH^1_{\fm}( \omega_R)=0$.
It follows that $\HH^1_{\fm}( \omega_R\otimes_R   \omega_R)\cong \HH^0_{\fm}( \omega_R/ \omega_R^2)$ which is of finite length. The particular case follows from  the generalized Cohen--Macaulay property, see \cite{sch}.
\end{proof}

The $n$-th symbolic power of $I$ is denoted by $I^{(n)}$.

 \begin{corollary}\label{sym}
Adopt the notation of Corollary \ref{o2}.  If $d>1$ and $ \omega_R\otimes  \omega_R$ is quasi-Buchsbaum, then $\fm \omega_R^{(2)}\subset \omega_R^2$.
 \end{corollary}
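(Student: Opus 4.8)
The plan is to read the statement off from the length/annihilator data already assembled in the proof of Corollary \ref{o2}, and then to reinterpret the relevant local cohomology module as a symbolic-power quotient. Recall that in Corollary \ref{o2} we identified $\omega_R$ with an ideal of $R$ (using that $(G_{d-1})$ forces generic Gorensteinness) and established the isomorphism $\HH^1_{\fm}(\omega_R\otimes_R\omega_R)\cong\HH^0_{\fm}(\omega_R/\omega_R^2)$. Since $\dim(\omega_R\otimes_R\omega_R)=d>1$ and $\omega_R\otimes_R\omega_R$ is assumed quasi-Buchsbaum, the definition of quasi-Buchsbaum with $i=1<d$ gives $\fm\,\HH^1_{\fm}(\omega_R\otimes_R\omega_R)=0$, hence $\fm\,\HH^0_{\fm}(\omega_R/\omega_R^2)=0$. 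Thus the whole problem reduces to the purely ideal-theoretic identification
\[
\frac{\omega_R^{(2)}}{\omega_R^2}=\HH^0_{\fm}\Big(\frac{\omega_R}{\omega_R^2}\Big),
\]
after which $\fm\,\omega_R^{(2)}\subseteq\omega_R^2$ is immediate.

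To prove this identification I would write $I$ for the ideal $\omega_R$ and describe the $\fm$-torsion of $I/I^2$. First I would record that, away from $\fm$, the hypothesis $(G_{d-1})$ makes $R_{\fp}$ Gorenstein for every $\fp\neq\fm$, so $I_{\fp}\cong\omega_{R_{\fp}}\cong R_{\fp}$ is principal, generated by a nonzerodivisor. Consequently $I$ lies in no minimal prime, so it has positive grade and $\Min(I)$ consists of height-one primes, all different from $\fm$ since $d\geq 2$; moreover locally at each $\fp\neq\fm$ one has $(I/I^2)_{\fp}\cong (R/I)_{\fp}$. The decisive structural fact I would then extract is
\[
\Ass\Big(\frac{\omega_R}{\omega_R^2}\Big)\subseteq\Min(\omega_R)\cup\{\fm\},
\]
because any associated prime $\fq\neq\fm$ localizes to an associated prime of $(I/I^2)_{\fq}\cong(R/I)_{\fq}$, which, as $R/I$ is Gorenstein (hence unmixed) of dimension $d-1$, must lie in $\Min(I)$.

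With this in hand the identification becomes a support computation. Writing $\HH^0_{\fm}(I/I^2)=\{m\in I/I^2:\Supp(Rm)\subseteq\{\fm\}\}$, an element $x+I^2$ lies in it exactly when $(R\,\overline{x})_{\fp}=0$ for all $\fp\neq\fm$, in particular for all $\fp\in\Min(I)$, which says $x\in I^2R_{\fp}\cap R$ for every such $\fp$, i.e. $x\in\omega_R^{(2)}$. Conversely, if $x\in\omega_R^{(2)}$ then $\Supp(R\,\overline{x})\cap\Min(I)=\emptyset$; should $\Supp(R\,\overline{x})$ contain a prime different from $\fm$, a minimal element $\fq\neq\fm$ of that support is associated to $I/I^2$, hence by the displayed containment lies in $\Min(I)$, a contradiction, so $\Supp(R\,\overline{x})\subseteq\{\fm\}$ and $x+I^2\in\HH^0_{\fm}(I/I^2)$. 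These two inclusions give the identification, and combined with $\fm\,\HH^0_{\fm}(\omega_R/\omega_R^2)=0$ they yield $\fm\,\omega_R^{(2)}\subseteq\omega_R^2$. The main obstacle is the middle step: controlling $\Ass(\omega_R/\omega_R^2)$ is exactly where $(G_{d-1})$ does the real work, through the local principality of the canonical ideal off $\fm$ together with the unmixedness of $R/\omega_R$; once that is in place the remaining arguments are formal.
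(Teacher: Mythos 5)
Your proposal is correct, and its skeleton coincides with the paper's: both start from the isomorphism $\HH^1_{\fm}(\omega_R\otimes_R\omega_R)\cong\HH^0_{\fm}(\omega_R/\omega_R^2)$ established in Corollary \ref{o2}, invoke the quasi-Buchsbaum hypothesis to kill $\fm\,\HH^1_{\fm}(\omega_R\otimes_R\omega_R)$, and then reduce everything to identifying $\HH^0_{\fm}(\omega_R/\omega_R^2)$ with $\omega_R^{(2)}/\omega_R^2$. Where you diverge is in how that identification is obtained. The paper passes through the short exact sequence $0\to\omega_R/\omega_R^2\to R/\omega_R^2\to R/\omega_R\to 0$, uses that $R/\omega_R$ is Gorenstein of positive dimension to get $\HH^0_{\fm}(\omega_R/\omega_R^2)\cong\HH^0_{\fm}(R/\omega_R^2)$, and then cites the general fact (from \cite{A}) that $\HH^0_{\fm}(R/J^2)\cong J^{(2)}/J^2$ for a locally principal ideal $J$. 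You instead work directly inside $\omega_R/\omega_R^2$: local principality of $\omega_R$ off $\fm$ (from $(G_{d-1})$) gives $(\omega_R/\omega_R^2)_{\fp}\cong(R/\omega_R)_{\fp}$, unmixedness of the Gorenstein quotient $R/\omega_R$ yields $\Ass(\omega_R/\omega_R^2)\subseteq\Min(\omega_R)\cup\{\fm\}$, and a support computation then matches the $\fm$-torsion with $\omega_R^{(2)}/\omega_R^2$. Your route is self-contained (it effectively unpacks the citation), at the cost of being longer; the paper's is shorter by quotation. Two small points you should make explicit: first, reduce to the case $\omega_R\neq R$ (the paper assumes $R$ is not Gorenstein; otherwise $R/\omega_R$ need not have dimension $d-1$ and the statement is trivial anyway); second, note that $\omega_R^{(2)}\subseteq\omega_R^{(1)}=\omega_R$ by unmixedness of $\omega_R$, so that the class $x+\omega_R^2$ for $x\in\omega_R^{(2)}$ really lives in $\omega_R/\omega_R^2$.
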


\begin{proof}
	Without loss of generality we may and do assume that $R$ is not Gorenstein.  In  Corollary \ref{o2} we proved that  $\HH^1_{\fm}( \omega_R\otimes_R   \omega_R)\cong \HH^0_{\fm}( \omega_R/ \omega_R^2)$.
 Since $R$ is not Gorenstein, we  deduce that  $R/ \omega_R$ is Gorenstein and of dimension $d-1>0$.
	Thus, $\HH^0_{\fm}( R/ \omega_R )=0$. We look at the exact sequence  $0\to \omega_R/ \omega_R^2 \to R/ \omega_R^2\to R/ \omega_R\to 0$ and the induced long exact sequence of
	local cohomology modules. Then we have $0\to\HH^0_{\fm}(  \omega_R/ \omega_R^2) \to \HH^0_{\fm}( R/ \omega_R^2)\to\HH^0_{\fm}( R/ \omega_R )=0 $.
Recall that $ \omega_R$ is locally complete-intersection (in fact locally principal). This implies that $\HH^0_{\fm}( R/ \omega_R^2)\cong  \omega_R^{(2)}/ \omega_R^2 $, e.g., see  \cite{A}. In sum,
	$$\HH^1_{\fm}( \omega_R\otimes_R   \omega_R)\cong\HH^0_{\fm}(\omega_R/ \omega_R^2)\cong\HH^0_{\fm}( R/ \omega_R^2)\cong\omega_R^{(2)}/ \omega_R^2.$$ 
As   $\fm\HH^1_{\fm}( \omega_R\otimes_R   \omega_R)=0$,  we conclude that $\fm \omega_R^{(2)}\subset \omega_R^2$.
\end{proof}

\section{The h-reduced problem}
In this section we deal with Conjecture \ref{1.7}. 
Recall that $Q(R)$ means the fraction field of $R$.
\begin{definition} (Matlis) Suppose $R$ is a domain.
An $R$-module $M$ is called $h$-reduced if $\Hom_R(Q,M)=0$.
\end{definition}
Recall that $(R,\fm)$ is called analytically unramified in co-dimension one, if for any $\fp$ of height one, $R_{\fp}$ is analytically unramified. 
\begin{proposition}\label{sol}
	Suppose $(R,\fm)$ is analytically unramified in co-dimension one. If $R$ is $(S_2)$
	then $\frac{\overline{R}}{R}$ is $h$-reduced.
\end{proposition}

\begin{proof}
	Without loss of generality we may assume that $\frac{\overline{R}}{R}$
	is not zero. The proof is proceeds by induction on $d:=\dim(R)$.
	Suppose first that $d\leq1$. In this case, and due to our assumption,
	$\overline{R}$ is finitely generated as an $R$-module. This in turn
	implies that there is some $0\neq c$ so that $c\frac{\overline{R}}{R}=0$.
	Let $f:Q\lo \frac{\overline{R}}{R}$ be given. Then $f(x)=cf(xc)\subseteq c \frac{\overline{R}}{R}=0$ and so $f=0$. Now, assume $d>1$ and the desired claim has been hold for rings of dimension less than $d$. 
Suppose on the way of contradiction that
there is a nonzero $f:Q\lo \frac{\overline{R}}{R}$. Set $I:=\im({f})\subseteq\frac{\overline{R}}{R}$, and recall that it is nonzero, as $f\neq 0$.
Suppose $\Supp(I)\subseteq\{\fm\}$. Clearly, this gives $\Supp(I)=\{\fm\}$. 
Then for any nonzero and finitely generated submodule $J\subseteq I$ we have $\depth(J)=0$, and so $$\frac{R}{ \fm} \hookrightarrow J\subseteq I\subseteq\frac{\overline{R}}{R}.$$
Consequently, $H^0_{\fm}(\frac{\overline{R}}{R})
\neq 0$. Now, we look at 
 the exact sequence $0\lo R\lo \overline{R}\lo \frac{\overline{R}}{R}\lo 0$.
This gives the exact sequence $$0=H^0_{\fm}(\overline{R} )\lo H^0_{\fm}(\frac{\overline{R}}{R})\lo H^1_{\fm}(R)=0,$$
since both of $\overline{R}$ and $R$ are of depth strictly bigger than one. From this contradiction, 
we can assume in addition that $\Supp(I)\nsubseteq\{\fm\}$.
 From this, we can find  $\fp\in\Supp(I)\setminus\{\fm\}$. We denote the natural map $Q(R)\to \frac{\overline{R}}{R}$ by $\bar{f}$ and recall that it is nonzero, as $f\neq 0$.
	Now, we look at:
	$$\xymatrix{
		&Q(R)\ar[r]^{\bar{f}} \ar[rrrd]_{\exists\neq 0}   &I \ar[r]^{\neq0} &I_{\fp} \ar[r]^{\subseteq}   &  (\frac{\overline{R}}{R})_{\fp}\ar[d]^{\cong}\\&&&&
		\frac{\overline{R_{\fp}}}{R_{\fp}}
	 &&&}$$
Since	$Q(R)=Q(R_{\fp})$ we get a nonzero  map 	$Q(R_{\fp})\to \frac{\overline{R_{\fp}}}{R_{\fp}}$. This contradicts the inductive hypothesis.
So, $f=0$ and the desired claim holds.
\end{proof}

 \end{document}